\documentclass[a4paper,leqno,12pt,twoside]{amsart}
\usepackage{amsmath,amsfonts,amssymb,amsthm,amscd}
\usepackage[colorlinks=true,citecolor=blue, urlcolor=blue, linkcolor=blue,pagebackref]{hyperref}
\usepackage[top=1.2in,bottom=1.2in,left=1.in,right=1.in]{geometry} 
\usepackage[dvipsnames]{xcolor}
\usepackage{tikz}
\usepackage{amsthm, amsmath, amssymb,latexsym}
\usepackage{tikz-cd}
\usepackage[all]{xy}

\numberwithin{equation}{section}

\newtheorem{theorem}{Theorem}[section]
\newtheorem{lemma}[theorem]{Lemma}
\newtheorem{corollary}[theorem]{Corollary}
\newtheorem{proposition}[theorem]{Proposition}

\theoremstyle{definition}
\newtheorem{remark}[theorem]{Remark}
\newtheorem{definition}[theorem]{Definition}

\newcommand{\nc}{\newcommand} 

\nc{\cH}{{\mathcal H}}
\nc{\cA}{{\mathcal A}}
\nc{\cG}{{\mathcal G}}
\nc{\cC}{{\mathcal C}}
\nc{\cO}{{\mathcal O}}
\nc{\cI}{{\mathcal I}}
\nc{\cB}{{\mathcal B}}
\nc{\cY}{{\mathcal Y}}
\nc{\cK}{{\mathcal K}} 
\nc{\cX}{{\mathcal X}}
\nc{\cS}{{\mathcal S}}
\nc{\cE}{{\mathcal E}}
\nc{\cF}{{\mathcal F}}
\nc{\cZ}{{\mathcal Z}}
\nc{\cQ}{{\mathcal Q}}
\nc{\cN}{{\mathcal N}}
\nc{\cP}{{\mathcal P}}
\nc{\cL}{{\mathcal L}}
\nc{\cM}{{\mathcal M}}
\nc{\cT}{{\mathcal T}}
\nc{\cW}{{\mathcal W}}
\nc{\cJ}{{\mathcal J}}
\nc{\cV}{{\mathcal V}}
\nc{\bH}{{\mathbb H}}
\nc{\bA}{{\mathbb A}}
\nc{\bG}{{\mathbb G}}
\nc{\bC}{{\mathbb C}}
\nc{\bO}{{\mathbb O}}
\nc{\bI}{{\mathbb I}}
\nc{\bB}{{\mathbb B}}
\nc{\bY}{{\mathbb Y}}
\nc{\bK}{{\mathbb K}} 
\nc{\bX}{{\mathbb X}}
\nc{\bS}{{\mathbb S}}
\nc{\bE}{{\mathbb E}}
\nc{\bF}{{\mathbb F}}
\nc{\bZ}{{\mathbb Z}}
\nc{\bQ}{{\mathbb Q}}
\nc{\bN}{{\mathbb N}}
\nc{\bP}{{\mathbb P}}
\nc{\bL}{{\mathbb L}}
\nc{\bM}{{\mathbb M}}
\nc{\bT}{{\mathbb T}}
\nc{\bW}{{\mathbb W}}
\nc{\bU}{{\mathbb U}}
\nc{\bD}{{\mathbb D}}
\nc{\bJ}{{\mathbb J}}
\nc{\bV}{{\mathbb V}}
\nc{\bbZ}{{\mathbb Z}}
\nc{\bR}{{\mathbb R}}
\nc{\co}{{\nabla}}
\newcommand{\OO}{\mathcal{O}}
\newcommand{\la}{\longrightarrow}
\newcommand{\im}{\operatorname{im}}
\newcommand{\debar}{{\overline{\partial}}}
\newcommand{\HG}{\operatorname{HG}}
\newcommand{\reg}{{\operatorname{reg}}}
\newcommand{\s}[1]{s_{#1}}
\newcommand{\suz}{s_{z,\nu,x}}

\begin{document}

\title[A bidifferential and second fundamental form of Prym map]{A twisted bidifferential,
second fundamental form of Prym map and projective structures}

\author[I. Biswas]{Indranil Biswas}

\address{Department of Mathematics, Shiv Nadar University, NH91, Tehsil
Dadri, Greater Noida, Uttar Pradesh 201314, India}

\email{indranil.biswas@snu.edu.in, indranil29@gmail.com}

\author[E. Colombo]{Elisabetta Colombo}

\address{Universit\`a degli Studi di Milano, Dipartimento di Matematica, Via Cesare Saldini 50,
20133 Milano, Italy}

\email{elisabetta.colombo@unimi.it}

\author[P. Frediani]{Paola Frediani}

\address{Universit\`a degli Studi di Pavia, Dipartimento di Matematica, Via Ferrata 1, 27100 Pavia, Italy}
 \email{paola.frediani@unipv.it}

 \author[A. Ghigi]{Alessandro Ghigi}
 \address{Universit\`a degli Studi di Pavia, Dipartimento di Matematica, Via Ferrata 1, 27100 Pavia, Italy}
 \email{alessandro.ghigi@unipv.it}

\subjclass[2010]{32G20, 14H10, 58A14}

\keywords{Prym map; twisted bidifferential; second fundamental form; Siegel metric; projective structure}

\begin{abstract}
Given a point $[(\widetilde{C},C)]$ of the moduli space ${\mathcal R}_g$, where
$\widetilde{C}$ is an \'etale double cover of a smooth curve $C$ of genus $g \,\geq\, 7$, we
construct a twisted
meromorphic bidifferential $\widehat{\eta}_P$ on $C\times C$ with a pole on the diagonal. It is shown that
the second fundamental form $\rho_P$ of the Prym map from ${\mathcal R}_g$
to ${\mathcal A}_{g-1}$ at $[(\widetilde{C},C)]$ is given by this $\widehat{\eta}_P$. This allows us to compare
$\rho_P$ with the second fundamental forms of the two Torelli maps, to ${\mathcal A}_{2g-1}$ and
${\mathcal A}_{g}$, at the points $[\widetilde{C}]$ and $[C]$ respectively.
It is shown that $\widehat{\eta}_P$
induces a projective structure on $C$ that depends on the double cover $\widetilde{C}\, \longrightarrow\, C$;
these projective structures together give a 
${\mathcal C}^{\infty}$ section $\beta_P$ of the bundle of projective structures on
the moduli space ${\mathcal R}_g$. Finally, it is 
proved that the $(0,\,1)$-component of the differential of the section $\beta_P$ is the pullback of the K\"ahler 
form of the Siegel metric on ${\mathcal A}_{g-1}$ via the Prym map. This is a key ingredient in
the proof that this section 
$\beta_P$ is not the pullback of a section of projective structures on ${\mathcal M}_{g}$ and, in addition, 
that it differs from the known projective structures on ${\mathcal R}_g$.
\end{abstract}

\thanks{I. Biswas is partially supported by a J. C. Bose Fellowship
(JBR/2023/000003). E. Colombo, P. Frediani and A. Ghigi are members
of GNSAGA (INdAM) and are partially supported by PRIN project {\em
Moduli spaces and special varieties} (2022). }

\maketitle

\tableofcontents

\section{Introduction}

Denote by ${\mathcal R}_g$ the moduli space of pairs $(\widetilde{C},\,C)$, where $\pi\,:\,
\widetilde{C}\, \longrightarrow\, C$ is an
\'etale double cover of a smooth complex projective curve $C$ of genus $g$. So
${\mathcal R}_g$ is also the moduli space of pairs $[(C,\, \alpha)]$, where $C$ is as above and
$\alpha$ is a nontrivial holomorphic line bundle over $C$ equipped with a holomorphic trivialization of
$\alpha^{\otimes 2}$. This ${\mathcal R}_g$ has the Prym map to the moduli space of principally polarized
abelian varieties ${\mathcal A}_{g-1}$ of dimension $g-1$.

Given a point $[(\widetilde{C},C)]$ of ${\mathcal R}_g$, we construct a 
twisted meromorphic bidifferential $\widehat{\eta}_P$ on $C\times C$ with pole of order two
on the diagonal. This $\widehat{\eta}_P$ has two major properties that will be elucidated.

On the one hand, the second fundamental form $\rho_P$ of the Prym map 
at a point $[(\widetilde{C},C)]$ is given by multiplication by $\widehat{\eta}_P$.
This allows us to compare $\rho_P$ with the second fundamental
forms of the two Torelli maps, that sends a curve to its Jacobian, at the points $[\widetilde{C}]$ and $[C]$.
On the other hand, it is shown here that $\widehat{\eta}_P$ induces a projective structure both on
$C$ and on $\widetilde C$ that depends on the double cover $\pi\,:\, \widetilde{C}
\, \longrightarrow\, C$. This produces a ${\mathcal C}^{\infty}$ section $\beta_P$ of 
the holomorphic fiber bundle of projective structures on ${\mathcal R}_g$.
We compute $\overline{\partial}(\beta_P)$. These two properties of $\widehat{\eta}_P$
are explained below with more details.

The Prym variety $P(C,\, \alpha)$ is the connected
component --- containing the origin --- of the kernel of the norm map $Nm_{\pi}\,: \,
{\rm Pic}^0(\widetilde{C})\, \longrightarrow\, {\rm Pic}^0(C)$ that sends
any line bundle ${\mathcal O}_{\widetilde{C}}(\sum_i a_i p_i)$, with $\sum_i a_i\,=\, 0$,
to ${\mathcal O}_C(\sum_i a_i \pi(p_i))$, where $a_i\, \in\,
{\mathbb Z}$ and $p_i\, \in\, \widetilde{C}$. This $P(C, \alpha)$ is a principally polarized
abelian variety of dimension $g-1$; denote by $\Xi$ the principal polarization
on $P(C, \alpha)$. Let ${\mathcal A}_{g-1}$ be
the moduli space of principally polarized complex abelian varieties of dimension $g-1$. The Prym map 
$$
 {\mathcal P}\ :\ {\mathcal R}_g\ \longrightarrow\ {\mathcal A}_{g-1},\ \ [(C,\,\alpha)]\
\longmapsto\ [(P(C, \alpha), \Xi)],
$$ 
is generically an embedding for $g \,\geq\, 7$ \cite{friedman-smith},
\cite{kanev-global-Torelli}, and it is generically finite for
$g \,\geq\, 6$. Moreover, the map ${\mathcal P}$ is never injective and it has positive
dimensional fibers \cite{donagi}, \cite{mumford}, \cite{carlos}.

The Siegel space ${\mathcal H}_{g-1}\,=\, {\rm Sp}(2(g-1), {\mathbb R})/{\rm U}(g-1)$, which is a Hermitian 
symmetric domain, covers ${\mathcal A}_{g-1}$ in the sense that ${\mathcal A}_{g-1}\,=\, {\rm Sp}(2(g-1), 
{\mathbb Z})\backslash {\mathcal H}_{g-1}$. So the canonical symmetric metric on ${\mathcal H}_{g-1}$ descends 
to an orbifold metric on the quotient ${\mathcal A}_{g-1}$; this descended K\"ahler metric on ${\mathcal 
A}_{g-1}$ is also called the Siegel metric. Assume that $g \,\geq\, 7$, and denote by ${\mathcal R}^0_g$ the 
Zariski open subset of $ {\mathcal R}_g$ where the Prym map is an embedding. The (dual of) the second 
fundamental form of the Prym map with respect to this metric at a point $[(C,\, \alpha)] \,\in\, {\mathcal 
R}^0_g$ is a linear map
\begin{equation}\label{b0}
\rho_P\ :\ I_2(K_C \otimes \alpha)\ \longrightarrow\ {\rm Sym}^2 (H^0(C,\, K_C^{\otimes 2})),
\end{equation}
where $K_C$ is the canonical line bundle of $C$ and
$I_2(K_C \otimes \alpha)\, \subset\, {\rm Sym}^2 (H^0(C,\, K_C \otimes\alpha))$ is the kernel of the
natural multiplication map 
\begin{equation}
\label{mult}
m: {\rm Sym}^2 (H^0(C,\, K_C \otimes\alpha)) \ \longrightarrow\ H^0(C,\, K_C^{\otimes 2}).
\end{equation}
In \cite{cfadv} it was proved that $\rho_P$ in \eqref{b0} coincides with a
generalisation of the Hodge Gaussian maps given in \cite{cpt}. Here a different
and very explicit description of $\rho_P$ is given.

Set $S\,:=\, C\times C$, and denote by $\Delta\, \subset\, S$ the reduced diagonal divisor in the
surface $S$.
Let $p,\,q\,:\, S \,\longrightarrow\, C$ respectively be the projections to the first factor and the
second factor. The canonical line bundle $K_S$ of $S$ is isomorphic to $(p^* K_C) \otimes (q^*K_C)$.
Consider the holomorphic line bundle $\alpha_S\, := \,(p^*\alpha) \otimes (q^*\alpha)$ over $S$,
where, as before, $\alpha$ is a nontrivial 2-torsion point in ${\rm Pic}^0(C)$ equipped with a
trivialization of $\alpha^{\otimes 2}$. Then $I_2(K_C \otimes \alpha)\,=\, \text{kernel}(m)$
(see \eqref{mult}) can be identified with a subspace of $H^0(S,\, K_S\otimes \alpha_S(-2 \Delta))$. 

Our first main result is the following.

\begin{theorem}[{Theorem \ref{prodotto}}]\label{thmi1}
There is a certain explicit global section $\widehat{\eta}_P \,\in\, H^0(S,\, K_S\otimes 
\alpha_S(2 \Delta))$ that satisfies the following condition.
For $[(C,\, \alpha)]\, \in \,{\mathcal R}^0_g$, with $g\,\geq\, 7$,
the second fundamental form of the Prym map $$\rho_P\ :\ I_2(K_C \otimes \alpha)\ \longrightarrow\
{\rm Sym}^2(H^0(C,\,K_C^{\otimes 2}))$$ coincides with the restriction to $I_2(K_C \otimes \alpha)$ of the
multiplication map
$$
H^0(S,\, K_S \otimes \alpha_S(-2\Delta))\ \longrightarrow\ H^0(S,\,K_S^{\otimes 2} ),\ \ \ Q\
\longmapsto\ Q \cdot \widehat{\eta}_P .
$$
\end{theorem}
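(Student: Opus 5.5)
The plan is to build $\widehat{\eta}_P$ from the Torelli picture on the double cover and push it down via the involution. Let $\sigma$ be the covering involution of $\pi:\widetilde C\to C$ and $\widetilde S=\widetilde C\times\widetilde C$. Known results (Colombo--Frediani--Ghigi) say that the second fundamental form of the Torelli map at $[\widetilde C]$ is given by multiplication by a bidifferential $\widehat\eta_{\widetilde C}\in H^0(\widetilde S, K_{\widetilde S}(2\widetilde\Delta))$. This form is built from the fundamental (Bergman) normalized bidifferential $\omega_{\widetilde C}$ by subtracting its $\mathcal C^\infty$ part, $\sum_{i,j}(\pi\mathrm{Im}\,\tau)^{-1}_{ij}\omega_i\boxtimes\overline{\omega_j}$. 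It is not holomorphic, but it is a holomorphic section after the appropriate $\debar$-corrections, and it is the canonically normalized one.

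\textbf{Construction.} The decomposition $H^0(\widetilde C,K_{\widetilde C})=\pi^*H^0(K_C)\oplus\pi^*H^0(K_C\otimes\alpha)$ into $\sigma$-invariant and anti-invariant parts suggests the anti-invariant combination
$$\tfrac12\bigl(\widehat\eta_{\widetilde C}-(1\times\sigma)^*\widehat\eta_{\widetilde C}\bigr)$$
on $\widetilde S$. This combination is anti-invariant under $1\times\sigma$ and under $\sigma\times 1$, and invariant under $\sigma\times\sigma$. It therefore descends to a section $\widehat\eta_P$ of $K_S\otimes\alpha_S(2\Delta)$ on $S$. Its polar part along $\Delta$ is that of $\widehat\eta_{\widetilde C}$ along $\widetilde\Delta$, since $(1\times\sigma)^*$ moves the pole to the graph of $\sigma$. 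Hence the pole is of order two, with biresidue $1$.

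Equivalently, one can write $\widehat\eta_P$ intrinsically. Let $\omega_\alpha$ be the unique meromorphic section of $K_S\otimes\alpha_S(2\Delta)$ with biresidue $1$ normalized along the $a$-periods; the Prym period matrix $\Pi$ gives the analogous normalization. Then
$$\widehat\eta_P=\omega_\alpha-\pi\sum_{i,j}(\mathrm{Im}\,\Pi)^{-1}_{ij}\,\nu_i\boxtimes\overline{\nu_j},$$
where $\nu_i$ is a normalized basis of $H^0(K_C\otimes\alpha)$. The symbol $\cdot\widehat\eta_P$ in the statement should be read as multiplication paired with the relevant $\debar$-correction. Restricted to $I_2(K_C\otimes\alpha)$, the antiholomorphic parts contribute harmonically, in the same way as in the Jacobian case.

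\textbf{Comparison with the Prym second fundamental form.} The Prym variety is the anti-invariant part of $J\widetilde C$, and $\mathcal P$ factors through the Torelli map of $\widetilde C$ followed by projection onto the totally geodesic sub-locus of abelian varieties with the induced involution. Concretely, $\mathcal H_{g-1}\times\mathcal H_g$ sits inside $\mathcal H_{2g-1}$ up to isogeny, and that inclusion is totally geodesic. So $\rho_P$ is the restriction of the Torelli second fundamental form $\rho_{\widetilde C}$ to $\sigma$-anti-invariant quadrics, $I_2(K_C\otimes\alpha)\subset I_2(K_{\widetilde C})$, followed by projection to $\sigma$-invariant quadratic differentials, $H^0(K_C^2)=H^0(K_{\widetilde C}^2)^{\sigma}$. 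This is the statement of \cite{cfadv} via Hodge--Gaussian maps.

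Now apply the Torelli formula $\rho_{\widetilde C}(Q)=Q\cdot\widehat\eta_{\widetilde C}$ for $Q$ the pullback of an element of $I_2(K_C\otimes\alpha)$. Such a $Q$ is anti-invariant under $1\times\sigma$. Its product with $\widehat\eta_{\widetilde C}$, projected to the $(\sigma\times1,1\times\sigma)$-invariant part, equals $Q\cdot\tfrac12(\widehat\eta_{\widetilde C}-(1\times\sigma)^*\widehat\eta_{\widetilde C})$. This descends to $Q\cdot\widehat\eta_P$, up to the factor coming from $\deg\pi=2$. The factor is absorbed by the normalization of the metrics and period matrices, which has to be checked.

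\textbf{Main obstacle.} The hard part is the precise identification of the $\mathcal C^\infty$ correction terms and the factor-$2$ normalization. Two points need care. First, the polarization on $P$ is half the restricted polarization, so $\mathrm{Im}\,\Pi$ differs from the anti-invariant block of $\mathrm{Im}\,\widetilde\tau$ by $2$. Second, the anti-invariant projection of $\omega_{\widetilde C}$ must equal $\pi^*\omega_\alpha$. I would try to settle both by direct comparison of biresidues and $a$-periods in a symplectic basis of $H_1(\widetilde C)$ adapted to $\sigma$. Alternatively, one can avoid the cover entirely and redo the Hodge--Gaussian computation of \cite{cfadv} directly with $\widehat\eta_P$, mimicking the Jacobian case: compute $\rho_P(Q)(v\odot w)$ by Schiffer variations at points $x,y$ and obtain $\widehat\eta_P(x,y)\,Q(x,y)$.
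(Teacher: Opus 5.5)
\textbf{Verdict.} Your main line is correct, but it runs in the opposite direction from the paper. The side description of $\widetilde{\eta}$ is wrong and has to be dropped.

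\textbf{Comparison of routes.} You \emph{define} $\widehat{\eta}_P$ as the $\alpha_S$-isotypic part $\tfrac12\bigl(\widetilde{\eta}-(1\times\sigma)^*\widetilde{\eta}\bigr)$ of the Torelli bidifferential of $\widetilde C$. Holomorphy then comes for free. Anti-invariance under $\sigma\times 1$ uses $(\sigma\times\sigma)^*\widetilde{\eta}=\widetilde{\eta}$ \cite[Lemma 3.6]{cfg}, which you should state.

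You then deduce the theorem from \cite[Theorem 3.7]{cfg} for $\widetilde C$ together with the identity $p_1\circ\widetilde{\rho}\big\vert_{I_2(K_C\otimes\alpha)}=\rho_P$. That identity is the result the paper quotes from \cite{cfgalois} in Section \ref{se5}; \cite{cfadv} gives $\rho_P=-2\HG$ instead. Your totally geodesic sketch does justify it once you note two things:
\begin{itemize}
\item $Q\in I_2(K_C\otimes\alpha)$ is the conormal vector $(0,Q)$ to $\mathcal R_g$ in $\mathcal A_g\times\mathcal A_{g-1}$.
\item $I_2(K_C\otimes\alpha)$ lies in $I_2(K_{\widetilde C})^+$. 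Its elements are anti-invariant only as sections on $\widetilde S$ under $1\times\sigma$.
\end{itemize}

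The paper instead stays on $C$. From $\rho_P=-2\HG$ and twisted Schiffer variations it produces $(\eta_P)_x\in H^0(C,K_C\otimes\alpha(2x))$, characterized by its leading term and by $[(\eta_P)_x]\in H^{0,1}(C,\alpha)$ (Proposition \ref{diagx}). Formula \eqref{rho_P}, evaluated on $\xi_u\odot\xi_v$ with $x\neq y$, is then the theorem. The double cover is used only to prove, via \eqref{somma}, that $x\mapsto(\eta_P)_x$ is holomorphic.

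In effect you take Theorem \ref{scomposizione} as a definition. This is shorter given the citations. The paper's route yields the intrinsic twisted-Hodge description of $\widehat{\eta}_P$ used in Sections \ref{se4}--\ref{se7}. Your section has that property too, since the anti-invariant part of $H^{0,1}(\widetilde C)$ is $H^{0,1}(C,\alpha)$.

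\textbf{What must be discarded.} The section of \cite{cfg} is $\omega_{\widetilde C}(x,y)-\pi\sum_{i,j}(\mathrm{Im}\,\tau)^{-1}_{ij}\,\omega_i(x)\,\omega_j(y)$, with \emph{both} factors holomorphic. By the Riemann bilinear relations, this is exactly the correction by holomorphic differentials in $y$ that moves the de Rham class of $\omega_{\widetilde C}(x,\cdot)$ into $H^{0,1}(\widetilde C)$. The result is an honest element of $H^0(\widetilde S,K_{\widetilde S}(2\widetilde\Delta))$.

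Subtracting $\pi\sum(\mathrm{Im}\,\tau)^{-1}_{ij}\omega_i\boxtimes\overline{\omega_j}$, a multiple of the Bergman kernel, instead gives a form of mixed type in $y$. That form is not meromorphic, so it is not in $H^0$. The same applies to your formula with $\overline{\nu_j}$; with it, $Q\cdot\widehat{\eta}_P$ would not even lie in $H^0(S,K_S^{\otimes 2})$.

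The statement means ordinary multiplication of holomorphic sections. There is no ``$\debar$-correction'' reading and no $\mathcal C^\infty$ term to identify on a fixed curve. Non-holomorphic behaviour appears only as the curve moves in moduli, which is why $\debar\beta_P\neq 0$. So the obstacle you single out does not exist. Your argument survives because it only uses $\widetilde{\eta}\in H^0(\widetilde S,K_{\widetilde S}(2\widetilde\Delta))$, which is true.

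\textbf{Normalization.} Since $(1\times\sigma)^*\widetilde{\eta}$ is regular near $\widetilde\Delta$, your combination has biresidue $\tfrac12$, not $1$. This is harmless. The statement is existential, so any universal nonzero constant relating $\rho_P$ to the invariant part of $Q\cdot\widetilde{\eta}$ can be absorbed into $\widehat{\eta}_P$. No factor-$2$ bookkeeping is needed.
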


The second fundamental form of the Torelli map ${\mathcal M}_g\, \longrightarrow\,
{\mathcal A}_g$ at $C$ is given as a multiplication by a global section
$\eta\,\in\, H^0(S, \, K_S(2\Delta))$ \cite[Theorem 3.7]{cfg}. Theorem \ref{thmi1} is similar
in spirit but for the Prym map.

As a byproduct of Theorem \ref{thmi1} it follows immediately that the second fundamental form
$\rho_P$ of the Prym map is injective (see Corollary \ref{inj}).

Using the description of $\widehat{\eta}_P$, it is shown here --- with the appropriate 
identifications --- that the section $\widehat{\eta}_P$ can be realized as the difference of the sections 
$\widetilde{\eta}$ and $\eta$ that govern the second fundamental forms of the Torelli maps for 
$\widetilde{C}$ and $C$ respectively (see Theorem \ref{scomposizione}).

Our second main result shows that the section $\widehat{\eta}_P\,\in\, H^0(S,\,K_S \otimes \alpha_S(2 
\Delta))$ produces a projective structure on $C$.

\begin{theorem}[{Theorem \ref{betaP}}]\label{thmi2}
The restriction of $\widehat{\eta}_P\,\in \,H^0(S,\,K_S\otimes \alpha_S(2 \Delta))$ to the second infinitesimal
neighbourhood of the diagonal $\Delta$ in $C\times C$ (which is denoted as $3\Delta$) gives a section lying in $H^0(3\Delta,\,
K_{S}(2 \Delta)\big\vert_{3 \Delta})$ that induces a projective structure on the curve $C$ (this
projective structure on $C$ depends on $\alpha$). 
\end{theorem}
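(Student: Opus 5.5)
We will use the standard dictionary between projective structures and bidifferentials. Following Biswas--Raina and the approach of \cite{cfg}, projective structures on $C$ correspond bijectively to sections $\beta\in H^0(3\Delta,\,K_S(2\Delta)\vert_{3\Delta})$ that are symmetric and whose restriction to $\Delta$ under the residue-type isomorphism $K_S(2\Delta)\vert_\Delta\,\cong\,\mathcal O_\Delta$ is the constant $1$. The set of such sections is an affine space over $H^0(C,\,K_C^{\otimes 2})$. This comes from the exact sequence
$$0\longrightarrow K_S\vert_\Delta\longrightarrow K_S(2\Delta)\vert_{3\Delta}\longrightarrow K_S(2\Delta)\vert_{2\Delta}\longrightarrow 0$$
together with $K_S\vert_\Delta\,\cong\,K_C^{\otimes 2}$. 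The plan is therefore to verify three things: that $\widehat{\eta}_P\vert_{3\Delta}$ can be viewed as an untwisted section, that it is symmetric, and that its biresidue along $\Delta$ equals $1$.

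\textbf{Step 1: untwisting.} The line bundle $\alpha_S\vert_\Delta\,=\,\alpha^{\otimes 2}$ is trivialized by the given isomorphism $\alpha^{\otimes 2}\,\cong\,\mathcal O_C$. Because $3\Delta$ is an infinitesimal thickening, this trivialization extends canonically to $\alpha_S\vert_{3\Delta}$. One way to see this is to use the isomorphism $p^*\alpha\vert_{3\Delta}\,\cong\,q^*\alpha\vert_{3\Delta}$ obtained by parallel transport for the unique flat unitary connection on $\alpha$ (locally, a flat frame of $\alpha$). A cleaner way is to pull back to $\widetilde C\times\widetilde C$, where $\pi^*\alpha$ is trivial, and restrict to the graph of the identity. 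We will use the pullback description: $\widehat{\eta}_P$ should come from a genuine bidifferential on $\widetilde C\times\widetilde C$, namely the anti-invariant part of $\widetilde\eta$ under $1\times\sigma$, and near the diagonal only the graph of the identity contributes. This gives a well-defined section of $K_S(2\Delta)\vert_{3\Delta}$.

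\textbf{Step 2: symmetry and leading term.} We will write $\widehat{\eta}_P$ explicitly in local coordinates, using its construction (via Theorem \ref{scomposizione}, $\widehat{\eta}_P\,=\,\widetilde\eta-\eta$ under the appropriate identifications). Near a point of $\Delta$, with flat frames of $\alpha$, we will show
$$\widehat{\eta}_P\,=\,\Big(\frac{c}{(z-w)^2}+f(z,w)\Big)\,dz\,dw,$$
with $f$ holomorphic. Symmetry under the involution of $S$ will follow from the symmetry of $\eta$ and $\widetilde\eta$; the frame of $\alpha_S$ is symmetric because $\alpha_S\,=\,p^*\alpha\otimes q^*\alpha$. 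Since the $(z-w)^{-1}$ term is antisymmetric, symmetry forces it to vanish.

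\textbf{Step 3: normalization.} The main obstacle is computing the constant $c$. The double pole of $\widetilde\eta$ along the diagonal of $\widetilde C$ has coefficient $1$ (the normalization in \cite{cfg}), and $\eta$ on $C$ likewise has coefficient $1$. Their naive difference would therefore have no pole at all. Hence the identification in Theorem \ref{scomposizione} must involve the anti-invariant projection with a factor, for instance $\tfrac12(\widetilde\eta-(1\times\sigma)^*\widetilde\eta)$ restricted near the diagonal, or its version with the factor of $2$ coming from $\pi^*$. We will track the constant directly from the explicit definition of $\widehat{\eta}_P$, in terms of a basis of $H^0(K_C\otimes\alpha)$ and the Hodge-theoretic kernel. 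The goal is to check that $c$ is a nonzero universal constant, equal to $1$ after the normalization chosen in the definition, or else rescale. Once $c\,=\,1$ and symmetry hold, the cited correspondence identifies $\widehat{\eta}_P\vert_{3\Delta}$ with a projective structure. It depends on $\alpha$ through the holomorphic part $f\vert_\Delta\,\in\,H^0(K_C^{\otimes 2})$ modulo the standard affine identification.
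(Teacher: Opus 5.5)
Your framework is sound and parallels the paper. Untwisting $\alpha_S$ near $\Delta$ by flat ($\pm1$-cocycle) frames is exactly Lemma \ref{lem1} and Remark \ref{triv}. Your criterion ``symmetric with biresidue $1$'' is equivalent on $3\Delta$ to the paper's condition $t\vert_{2\Delta}=s_0$ defining ${\mathcal Q}(C)$: write $t=a\,dz\,dw/(z-w)^2$ with $a=a_0(z)+a_1(z)(z-w)+\cdots$, and symmetry forces $a_1=-\frac{1}{2}a_0'$. Deriving symmetry of $\widehat\eta_P$ from that of $\widetilde\eta$ and $\eta$ via Theorem \ref{scomposizione} is a legitimate variant, since symmetry is insensitive to scalar factors in the identification \eqref{ek}. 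One small correction to Step 1: a trivialization over $\Delta$ does not extend canonically to $3\Delta$ merely because $3\Delta$ is an infinitesimal thickening. Canonicity comes from the flat frames, as in your parenthetical justification.

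The genuine gap is Step 3, which carries the whole content of the theorem: you never determine the double-pole coefficient $c$, you only list candidate normalizations. Note that $c$ is automatically constant, being the restriction of $\widehat\eta_P$ to $\Delta$ as a section of $K_S\otimes\alpha_S(2\Delta)\vert_\Delta\cong{\mathcal O}_\Delta$, a holomorphic function on a compact curve. So what must be proved is $c\neq0$. If $c=0$, symmetry would also kill $a_1$, $\widehat\eta_P$ would be holomorphic near $\Delta$, and its restriction to $2\Delta$ would be $0$, not $s_0$.

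Your route through Theorem \ref{scomposizione} cannot decide this as it stands. As you noticed, the literal reading $\widetilde\eta=\eta+\widehat\eta_P$ forces $c=0$. It only works once you pin down the identification as $\Phi^*\widehat\eta_P=\widetilde\eta-(1\times\sigma)^*\widetilde\eta$, which can be checked against the pairing characterizations in \eqref{cd} and \eqref{cdtilde}. Then, since $(1\times\sigma)^*\widetilde\eta$ has poles only along $\Gamma$, $\Phi^*\widehat\eta_P$ has the same polar part along $\widetilde\Delta$ as $\widetilde\eta$.

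The direct argument, which is what the paper's ``by construction'' amounts to, is the local expansion \eqref{eta_p}. For every $x$ it gives $s_{z,\nu,x}=\bigl(-(z-z(x))^{-2}+\cdots\bigr)\nu\otimes dz$, so $c$ is a universal nonzero constant; its sign is a matter of the adjunction convention for $s_0$. The simple-pole term there comes from the factor $\partial h/h$ in \eqref{DpH}, so it vanishes in the flat frames $s_U$ used for the untwisting, and symmetry is not even needed.

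Alternatively, $c\neq0$ follows from \eqref{cd}. Suppose $(\eta_P)_x(v)$ had at most a simple pole. Its class lies in $H^1(C,{\mathbb C}_\alpha)$, so its residue vanishes by the Mayer--Vietoris sequence in Lemma \ref{hyperplane}. Then it lies in $H^0(C,K_C\otimes\alpha)$ and has class of type $(1,0)$. This contradicts $[(\eta_P)_x(v)]=-(i_P)_C(\psi_P)_x(v)\neq0$ in $H^{0,1}$ whenever $x$ is not a base point of $K_C\otimes\alpha$.
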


Using Theorem \ref{thmi2} for the moving point $[(C, \alpha)] \,\in\, {\mathcal R}_g$, we obtain a
${\mathcal C}^{\infty}$ section $\beta_P$ of the (affine) bundle $\widehat{\mathcal V}
\, \longrightarrow\, {\mathcal R}_g$ of projective
structures (see \eqref{vhat}). The $(0,\,1)$-component of the
differential of this section $\overline{\partial}(\beta_P)$ is a $(1,\,1)$-form on ${\mathcal R}_g$
(see Section \ref{se7}). The following theorem is proved:

\begin{theorem}[{Theorem \ref{debar}}]\label{thmi3}
The $(1,\,1)$-form $\overline{\partial}(\beta_P)$ on ${\mathcal R}_g$ is a nonzero scalar multiple of
${\mathcal P}^* \omega_S$, where $\omega_S$ is the K\"ahler form of the Siegel metric on ${\mathcal A}_{g-1}$,
and ${\mathcal P}$ is the Prym map. 
\end{theorem}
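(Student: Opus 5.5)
The plan is to follow the strategy used for the Torelli map in \cite{cfg} (and the analogous computation of $\overline{\partial}$ of the canonical projective structure on ${\mathcal M}_g$), adapted to the Prym setting.

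\emph{Step 1: write $\widehat{\eta}_P$ in terms of the Prym period data.} Fix a basis $\omega_1,\ldots,\omega_{g-1}$ of $H^0(C,\,K_C\otimes\alpha)$; these are the anti-invariant holomorphic $1$-forms on $\widetilde{C}$, i.e. the holomorphic cotangent space of $P(C,\alpha)$ at the origin. Let $\Pi$ be the (normalized) Prym period matrix, so that ${\mathcal P}^*\omega_S$ is, up to a constant, $\operatorname{tr}\bigl((\operatorname{Im}\Pi)^{-1} d\Pi\,(\operatorname{Im}\Pi)^{-1} d\overline{\Pi}\bigr)$. I expect the explicit construction of $\widehat{\eta}_P$ to have the form
$$\widehat{\eta}_P \,=\, \widehat{\omega}_P - c\sum_{i,j} (\operatorname{Im}\Pi)^{-1}_{ij}\, p^*\omega_i\otimes q^*\overline{\omega_j},$$
or its holomorphic analogue. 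Here $\widehat{\omega}_P$ is the anti-invariant part of the canonical (Bergman/Schiffer type) bidifferential of $\widetilde{C}$, pushed down to $C\times C$ as a twisted bidifferential. It depends holomorphically on the moduli point, whereas the correction term involves the Hermitian inverse $(\operatorname{Im}\Pi)^{-1}$; this correction is what makes $\beta_P$ only ${\mathcal C}^\infty$. I would take this decomposition from the construction of $\widehat{\eta}_P$ in the paper, using the relation $\widehat{\eta}_P=\widetilde{\eta}-\eta$ of Theorem \ref{scomposizione} if convenient.

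\emph{Step 2: differentiate on the affine bundle.} The difference of two projective structures is a quadratic differential, so $\widehat{\mathcal V}$ is an affine bundle modelled on the bundle with fibre $H^0(C,\,K_C^{\otimes 2})$, which is the holomorphic cotangent bundle of ${\mathcal R}_g$. Hence $\overline{\partial}\beta_P$ is a section of $\Omega^{0,1}\otimes\Omega^{1,0}$, i.e. a $(1,1)$-form. By Theorem \ref{thmi2}, $\beta_P$ is obtained from $\widehat{\eta}_P$ by restriction to $3\Delta$, which is a holomorphic linear operation. Locally one has a holomorphic reference section, namely the one induced by $\widehat{\omega}_P$, which depends holomorphically on moduli (for instance via the holomorphic dependence of the Bergman kernel on a period-normalized Torelli-marked family). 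Therefore $\overline{\partial}\beta_P$ equals $\overline{\partial}$ of the restriction to the diagonal of the correction term, that is
$$-c\sum_{i,j}\overline{\partial}\bigl((\operatorname{Im}\Pi)^{-1}_{ij}\,\overline{\omega_j}\bigr)\,\omega_i\big\vert_{\Delta},$$
where the $\omega_i$ may be chosen holomorphic in moduli.

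\emph{Step 3: compute the derivative.} Use Rauch's variational formula for Pryms: for $v\in H^1(C,T_C)$, the derivative is $d\Pi_{ij}(v)=\langle v,\,\omega_i\omega_j\rangle$ with $\omega_i\omega_j\in H^0(K_C^{\otimes 2})$. Combining this with $\overline{\partial}(\operatorname{Im}\Pi)^{-1}= -(\operatorname{Im}\Pi)^{-1}\,\overline{\partial}(\operatorname{Im}\Pi)\,(\operatorname{Im}\Pi)^{-1}$, and with the antiholomorphic variation of $\overline{\omega_j}$ (which contributes only through projection to $\overline{H^0}$, i.e. again via $d\overline{\Pi}$), the result evaluated on $(v,\overline{w})$ should be a constant times
$$\sum (\operatorname{Im}\Pi)^{-1}_{ij}(\operatorname{Im}\Pi)^{-1}_{kl}\,\langle v,\omega_i\omega_k\rangle\,\overline{\langle w,\omega_j\omega_l\rangle}.$$
This is exactly ${\mathcal P}^*\omega_S(v,\overline{w})$ up to a nonzero scalar, since $d{\mathcal P}$ is dual to the multiplication map $m$.

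\emph{Main obstacle.} I expect the hardest part to be Step 2: proving rigorously that the $\widehat{\omega}_P$ part gives a holomorphically varying section over ${\mathcal R}_g$, and handling the dependence on the choice of symplectic marking of the anti-invariant homology, which must cancel in the final answer. I would first work on the Teichm\"uller cover with a fixed marking, where normalized differentials vary holomorphically, and check that the formula is marking independent so that it descends. The remaining difficulty is bookkeeping of constants, in particular the factor $2$ coming from the polarization on the Prym being half the restricted one.
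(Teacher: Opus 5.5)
Your route differs from the paper's and can be made to work, but Step 1 contains an error and an unproved claim.

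The correction term must be holomorphic in both variables, namely $-c\sum_{i,j}(\operatorname{Im}\Pi)^{-1}_{ij}\,p^*\omega_i\otimes q^*\omega_j$. The term with $q^*\overline{\omega_j}$ is not a section of $L=K_S\otimes\alpha_S(2\Delta)$. On $\Delta$ it gives a section of $K_C\otimes\overline{K_C}$ rather than a quadratic differential, so it cannot describe a point of $\widehat{\mathcal V}$. Take the holomorphic term, with the $\omega_i$ normalized against a $\sigma$-adapted marking so that they and $\widehat{\omega}_P$ vary holomorphically. Then the only non-holomorphic ingredient is $Y^{-1}$, where $Y=\operatorname{Im}\Pi$, and there is no ``antiholomorphic variation of $\overline{\omega_j}$''. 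Step 3 reduces to $\overline{\partial}Y^{-1}=-Y^{-1}(\overline{\partial}Y)Y^{-1}$ with $\overline{\partial}Y=\frac{i}{2}\,\overline{d\Pi}$, plus the Prym Rauch formula.

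Moreover, the decomposition cannot be taken from the paper, which never expresses $\widehat{\eta}_P$ through periods, so you must prove it. Use Proposition \ref{diagx}: $(\eta_P)_x(v)$ is the unique element of $H^0(C,\,K_C\otimes\alpha(2x))$ with the prescribed double pole whose class lies in $H^{0,1}(C,\alpha)\subset H^1(C,\mathbb{C}_\alpha)$. For uniqueness, the difference of two such elements has class extending over $x$. By Lemma \ref{hyperplane} it then has no residue, so it lies in $H^0(C,\,K_C\otimes\alpha)$, with class in $H^{1,0}\cap H^{0,1}=0$. Reciprocity and the bilinear relations for the normalized anti-invariant bidifferential then show that $\widehat{\omega}_P(x,\cdot)-c\sum_{i,j}Y^{-1}_{ij}\,\omega_i(x)\,\omega_j$ has this property for a suitable $c\neq 0$.

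Going through Theorem \ref{scomposizione} is no shortcut, since identifying $\widetilde{\eta}$ and $\eta$ with Schiffer kernels needs the same argument. On the other hand, your ``main obstacle'' is not one. Holomorphic dependence of the normalized bidifferential on a marked family is classical. Since $\beta_P$ is intrinsic and $\overline{\partial}$ is local, no marking-independence check is needed.

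The paper avoids periods altogether. By \eqref{cdetaPrel} (Theorem \ref{c-infty-split}), $-\chi_P\circ\eta_P=i_P\circ\psi_P$ is the ${\mathcal C}^\infty$ splitting coming from the Hodge decomposition. The paper then applies a sequence of holomorphic operations to \eqref{hodge1}:
\begin{itemize}
\item tensoring by $K_{{\mathcal C}/{\mathcal R}_g}\otimes\alpha_{{\mathcal C}/{\mathcal R}_g}$;
\item applying $f_*$;
\item dividing by $\Lambda^2{\mathcal F}^1$;
\item pushing out along $(d{\mathcal P})^*$, realized by restriction to $3\Delta$.
\end{itemize}
These carry the pullback under ${\mathcal P}$ of the Hodge splitting $h$ of $0\to\mathrm{Sym}^2{\mathcal F}^1\to{\mathcal G}^+\to{\mathcal O}\to 0$ on ${\mathcal A}_{g-1}$ to $\beta_P$. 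Hence $\overline{\partial}\beta_P={\mathcal P}^*(\overline{\partial}h)$, and $\overline{\partial}h$ is a nonzero multiple of $\omega_S$ by \cite[Proposition 4.3]{bcfp}.

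Your Step 3 is essentially a coordinate proof of that proposition for Pryms. The paper's argument is short and marking-free, and needs no formula for $\widehat{\eta}_P$ beyond its Hodge-theoretic characterization, but it imports the identification on ${\mathcal A}_{g-1}$. Yours requires markings, Rauch's formula and the Schiffer-kernel description of $\widehat{\eta}_P$, but it is self-contained and gives the constant explicitly.
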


Theorem \ref{thmi3} may be compared with one of the main theorems of \cite{bcfp}. In fact, in \cite{bcfp} it 
is proved that the section $\eta \,\in\, H^0(S,\, K_S(2\Delta))$ --- that gives the 
second fundamental form of the Torelli map --- has the property that its restriction to $3 \Delta$ 
corresponds to a canonical projective structure on $C$. Consider the
${\mathcal C}^{\infty}$ section $\beta^{\eta}$ 
of the bundle of projective structures on the moduli space ${\mathcal M}_g$ obtained by varying 
the curve $C$ in ${\mathcal M}_g$. It is shown in \cite{bcfp} that
that $\overline{\partial}(\beta^{\eta})$ is a 
nonzero scalar multiple of the pullback of the K\"ahler form of the Siegel metric on ${\mathcal A}_g$ via the 
Torelli map (see \cite[Theorem 4.4 ]{bcfp}).

In \cite{bcfp} it was proved that the canonical section $\beta^{\eta}$ of projective structures on ${\mathcal 
M}_g$ induced by the second fundamental form of the Torelli map is actually different from the canonical section 
$\beta_u$ induced by the uniformisation theorem
of Riemann surfaces (see \cite[Theorem 4.1]{bcfp}). Another canonical ${\mathcal C}^{\infty}$ 
section of projective structures on ${\mathcal M}_g$ has been constructed in \cite{bgv1}, \cite{bv} using
the second order theta functions; these three sections are all different.

In Theorem \ref{different} it is shown that the section $\beta_P$ cannot be a pullback,
by the natural map ${\mathcal R}_g\,\longrightarrow\, {\mathcal M}_g$, of a 
section of projective structures on ${\mathcal M}_g$. Consequently, the section $\beta_P$ is different
from the pullback, to ${\mathcal R}_g$, of the above three sections on ${\mathcal M}_g$.

Another ${\mathcal C}^{\infty}$ section of projective structures on
${\mathcal R}_g$ was constructed in \cite{bgv} using the second order
theta functions. Also this section is not a pullback of a section of
projective structures on ${\mathcal M}_g$. In Theorem \ref{different}
it is shown that this section is also different from $\beta_P$.

The structure of the paper is as follows. Section \ref{se2} gives some preliminary notions on projective 
structures, on Schiffer variations and on the Prym map, and then the second fundamental form $\rho_P$ of the 
Prym map is described as a generalised Hodge Gaussian map, following \cite{cfadv} and \cite{cpt}. Section 
\ref{se3} is technical and it is entirely devoted to the construction of the section 
$\widehat{\eta}_P$. In Section \ref{se4} we prove that the section $\widehat{\eta}_P$ induces a projective 
structure on $C$ (Theorem \ref{betaP}). Section \ref{se5} contains Theorem \ref{prodotto}; also, the section 
$\widehat{\eta}_P$ is compared with the sections $\widetilde{\eta}$ and $\eta$ that govern the second 
fundamental forms of the Torelli maps for $\widetilde{C}$ and $C$ respectively (Theorem \ref{scomposizione}).
In Section \ref{se7} it is shown that $\widehat{\eta}_P$ induces a ${\mathcal C}^{\infty}$ section $\beta_P$ of the 
bundle of projective structures on ${\mathcal R}_g$, and also $\overline{\partial}(\beta_P)$ is computed.

\section{Preliminaries}\label{se2}

\subsection{Projective structure on compact Riemann surface}

Using the standard action of $\text{GL}(2, {\mathbb C})$ on ${\mathbb C}^2$,
the projective linear group $\text{PGL}(2, {\mathbb C})$ is identified with the group
of holomorphic automorphisms of ${\mathbb C}{\mathbb P}^1$.

Let $C$ be a compact Riemann surface of genus $g$, with $g \,\geq\, 2$. 

\begin{definition}
A projective structure on $C$ is given by a
holomorphic coordinate atlas $\{(U_i,\, \phi_i)\}_{i\in I}$ such that
$\bigcup_{i\in I} U_i\,=\, C$ and for every
pair $i,\,j\, \in\, I$ for which $U_i\bigcap U_j\, \not=\, \emptyset$, there is
a locally constant function $\tau_{j,i}\, :\, U_i\bigcap U_j\, \longrightarrow\,
\text{PGL}(2, {\mathbb C})$ satisfying the condition $\phi_j(z)\,=\,
\tau_{j,i}(z)(\phi_i(z))$ for all $z\, \in\, U_i\bigcap U_j$.
Two such collections of pairs $\{(U_i,\, \phi_i)\}_{i\in I}$ and
$\{(U_i,\, \phi_i)\}_{i\in J}$ are called equivalent if
their union $\{(U_i,\, \phi_i)\}_{i\in I\cup J}$ again satisfies the
the above condition. A \textit{projective structure} on $C$
is an equivalence class of collection of pairs satisfying the above condition (see \cite{Gu}).
\end{definition}

There are projective structures on every compact Riemann surface $C$, for example, locally 
defined sections of the uniformisation map $\widehat{C}\, \longrightarrow\, C$,
where $\widehat{C}$ is ${\mathbb H}$, $\mathbb C$ or ${\mathbb C}{\mathbb P}^1$,
produce a natural projective structure on $C$. 
In fact, the space of all projective structures on $C$ is an affine space modelled on the 
complex vector space $H^0(C,\, K^{\otimes 2}_C)$ \cite{Gu}.

Let us now explain another equivalent interpretation of projective structures on $C$. Consider the complex
surface $S \,=\, C\times C$. Denote by $\Delta$ the diagonal $\{(z,\, z)\, \,\big\vert\,\, z\, \in\,C\}
\,\subset\, S$, and define the holomorphic line bundle $M\,:=\, K_S(2\Delta)\,=\,
K_S\otimes_{{\mathcal O}_S}{\mathcal O}_S(2\Delta)$. Let
$p,\,q\,:\, S\, \longrightarrow\, C$ respectively be the first and second projection respectively.

For any $n\, \geq\, 1$, the $n$--th order infinitesimal neighborhood of $\Delta$ in
$C\times C$ will be denoted by $(n+1)\Delta$. Since $K_S \,\cong\,
(p^*K_C) \otimes (q^*K_C)$, one easily checks that $ M\vert_\Delta$ is the trivial line
bundle on $\Delta$. In fact, the trivialization
of ${ M}\vert_\Delta$ extends to a canonical trivialization of
${M}\vert_{2\Delta}$ (see \cite[p.~754, Theorem 2.1]{BR1}, \cite[p.~688, Theorem 2.2]{BR2}).
Let
\begin{equation}\label{evp}
s_0 \in\, H^0(2\Delta,\, {M}\vert_{2\Delta})
\end{equation}
be the section giving this
canonical trivialization.

Let
\begin{equation}\label{QC}
{\mathcal Q}(C)\, \subset\, H^0(3\Delta,\, {L}\vert_{3\Delta})
\end{equation}
be the locus of all sections $t$ whose restriction to $2\Delta$ satisfies the condition
$t\vert_{2\Delta}\,=\,s_0$ (see \eqref{evp} for $s_0$). The space ${\mathcal Q}(C)$ in \eqref{QC}
is an affine space modelled on the complex vector space $H^0(C,\, K^{\otimes 2}_C)$. 

Let ${\mathcal P}(C)$ denote the space of all projective structures on the
Riemann surface $C$. There exists a natural map
\begin{equation}\label{ephi}
{\mathcal P}(C)\, \longrightarrow\, {\mathcal Q}(C)
\end{equation}
which is an isomorphism of affine spaces modelled on 
$H^0(C,\, K^{\otimes 2}_C)$ (see \cite[p.~757, Theorem 3.2]{BR1} and \cite[p.~758, Lemma 3.6]{BR1};
see also \cite[p.~688, Theorem 2.2]{BR2}).

\subsection{Prym map}

Denote by $ {\mathcal R}_g$ the moduli space of all pairs of the form $(C,\, \pi)$, where $C$ is
a smooth projective curve of genus $g$ and $\pi\, :\, \widetilde{C} \,\longrightarrow\, C$ is a
nontrivial \'etale double cover. Such a pair $(C,\, \pi)$ gives a triple $(C,\, \alpha,\, \rho_\alpha)$
where $\alpha \,\in \,{\rm Pic}^0(C)$ satisfies the conditions $ \alpha^2 \,\cong\, {\mathcal O}_C\,\neq\, \alpha$
and $\rho_\alpha\, :\, \alpha^2\, \stackrel{\sim}{\longrightarrow}\, {\mathcal O}_C$ is a holomorphic
isomorphism. We will often suppress $\rho_\alpha$ for notational convenience.

Given a nontrivial \'etale
double cover $ \pi\,:\,\widetilde{C} \,\longrightarrow\, C$, consider the norm homomorphism
$$Nm_{\pi}\ :\ {\rm Pic}^0(\widetilde{C})\ \longrightarrow\ {\rm Pic}^0(C)$$ defined by
${\mathcal O}_{\widetilde{C}}(\sum_i a_i p_i)\,
\longmapsto\, {\mathcal O}_C(\sum_i a_i \pi(p_i))$, where $a_i\, \in\, {\mathbb Z}$ with $\sum_i a_i
\,=\, 0$ and $p_i\, \in\, \widetilde{C}$. The Prym variety $P(C,\alpha)$ is
the connected component, containing the origin, of the kernel of the homomorphism $Nm_{\pi}$. 
This $P(C,\alpha)$ is a principally polarized abelian variety of dimension $g-1$; the principal
polarization is denoted by $\Xi$. Denote by $\mathcal{A}_{g-1}$ the moduli space of principally polarized
complex abelian varieties of dimension $g-1$. The above construction defines the Prym map
\begin{equation}\label{prym}
{\mathcal P}\ :\ {\mathcal R}_g\ \longrightarrow\ {\mathcal A}_{g-1},\ \ [(C,\,\alpha)]\
\longmapsto\ [(P(C, \alpha), \Xi)].
 \end{equation}
 The Prym map $\mathcal P$ is generically an embedding for
 $g \,\geq\, 7$ \cite{friedman-smith}, \cite{kanev-global-Torelli},
 and it is generically finite for $g \,\geq\, 6$. Moreover, it is not
 injective in every genus and in fact it possesses positive dimensional
 fibers \cite{donagi}, \cite{mumford}, \cite{carlos}.

\subsection{Second fundamental form of the Prym map}

Assume that $g \,\geq\, 7$, and denote by ${\mathcal R}^0_g$ the Zariski open subset of $ {\mathcal R}_g$
where the Prym map is an embedding. On ${\mathcal R}^0_g$, there is the short
exact sequence of holomorphic vector bundle given by the differential of ${\mathcal P}$
\begin{equation}
\label{tangent}
0\ \longrightarrow\ T_{{\mathcal R}^0_g}\ \xrightarrow{\,\,\,d{\mathcal P}\,\,\,}
\ T_{{\mathcal A}_{g-1}}\big\vert_{{\mathcal R}^0_g}
\ \stackrel{p}\longrightarrow\ {\mathcal N}_{{\mathcal R}^0_g/{\mathcal A}_{g-1}}\ \longrightarrow\ 0,
\end{equation}
where $T_Y$ stands for the holomorphic tangent bundle of $Y$, and
${\mathcal N}_{{\mathcal R}^0_g/{\mathcal A}_{g-1}}$ is the normal bundle to
${\mathcal R}^0_g$.

Consider the Hermitian symmetric domain ${\mathcal H}_{g-1}\,\cong\, {\rm Sp}(2(g-1), {\mathbb
R})/{\rm U}(g-1)$, which is known as the Siegel space. The moduli space ${\mathcal A}_{g-1}$ is identified
with the quotient space ${\rm Sp}(2(g-1), {\mathbb Z})\backslash {\mathcal H}_{g-1}$.
Using this identification, the canonical
symmetric metric on ${\mathcal H}_{g-1}$ descends to an orbifold metric on ${\mathcal A}_{g-1}$; this
descended K\"ahler metric on ${\mathcal A}_{g-1}$ is called the Siegel metric. Denote by $\nabla$ the
Chern connection on $T_{{\mathcal A}_{g-1}}$ for the Siegel metric on ${\mathcal A}_{g-1}$. The second
fundamental form of ${\mathcal R}^0_g\, \hookrightarrow\, {\mathcal A}_{g-1}$ is the map:
$$
II_P\ :\ {\rm Sym}^2( T_{{\mathcal R}^0_g} )\ \longrightarrow\ {\mathcal N}_{{\mathcal R}^0_g/{\mathcal A}_{g-1}},\ \
X \otimes Y\ \longmapsto\ p(\nabla_X(Y)).
$$
Let
\begin{gather}\label{I2}
I_2(K_C\otimes \alpha)\ \,\subset\,\ {\rm Sym}^2(H^0(C,\ K_C \otimes \alpha))
\end{gather}
denote the kernel of the multiplication map
\begin{equation}\label{mm}
{\rm Sym}^2(H^0(C,\, K_C \otimes \alpha))\ \longrightarrow\
H^0(C,\, K_C^{\otimes 2})
\end{equation}
(recall that $\alpha^{\otimes 2}\,=\, {\mathcal O}_C$). The dual $(d{\mathcal P})^*$ of the differential of
the Prym map coincides with the multiplication map in \eqref{mm}. Consequently,
The conormal bundle of $\cP(\mathcal R^0_g) \,\subset\, \cA_{g-1}$ at the point $[(C,\, \alpha)]$
is identified with $I_2(K_C \otimes \alpha)$. Therefore, dualizing \eqref{tangent} at
a point $[(C,\, \alpha)] \,\in\, {\mathcal R}^0_g$ we get the short
exact sequence
\begin{equation*}
0\ \longrightarrow\ I_2(K_C \otimes \alpha)\ \longrightarrow\ {\rm Sym}^2 (H^0(C,\, K_C \otimes\alpha))
\ \xrightarrow{\,\,\,{(d{\mathcal P}})^*\,\,\,}\ H^0(C,\, K_C^{\otimes 2})\ \longrightarrow\ 0.
\end{equation*}
 
Thus, the dual of the second fundamental form of the Prym map $II_P$ at $[(C,\, \alpha)]$ is a linear map 
\begin{equation}\label{secondP}
 \rho_P\ :\ I_2(K_C \otimes \alpha)\ \longrightarrow\ {\rm Sym}^2 (H^0(C,\, K_C^{\otimes 2})).
 \end{equation}
Using Serre duality $H^0(C,\, K_C^{\otimes 2})\,=\, 
H^1(C,\, T_C)^*$,
we can also view $\rho_P$ as a linear map 
\begin{equation}\label{nrp}
\rho_P\ :\ I_2(K_C \otimes \alpha)\otimes H^1(C,\, T_C)\ \longrightarrow\ H^0(C,\, K_C^{\otimes 2}).
\end{equation}

In \cite{cfadv} it was proved that the map $\rho_P$ coincides with a generalisation of
the Hodge Gaussian maps given in \cite{cpt}. The map $\rho_P$ will
be described in Section \ref{se2.4}.

\subsection{Schiffer variations}
\label{section:Schiffer}

Consider an irreducible complex projective curve $C$ of genus at least one, and take a point
$x \,\in\, C$. Then the coboundary map of cohomologies for the short exact sequence
$$0\, \longrightarrow \,T_C \,\longrightarrow\, T_C(x) \,\longrightarrow\, T_C(x)_{|x}\,=\,
(T^{\otimes 2}_C)_x \,\longrightarrow\,0, $$
where $T_C$ is the holomorphic tangent bundle of $C$, yields an injection
\begin{equation}\label{b1}
H^0(T_C(x)_{|x})\ \hookrightarrow\ H^1(C,\,T_C).
\end{equation}
 
Elements of $H^1(C,\,T_C)$ lying in the image of the homomorphism in \eqref{b1}
are called \emph{Schiffer variations} at $x$. If
$(U,\, z)$ is a holomorphic coordinate chart centered at $x\, \in\, C$ and
\begin{equation}\label{bx}
b_x\ :\ C\ \longrightarrow\ {\mathbb R}
\end{equation}
is a bump 
function which is supported on $U$ and is equal to 1 on a neighbourhood of $x$, then
\begin{equation}\label{etheta}
\theta_x\,\ =\,\ \frac{\overline{\partial} b_x}{z} \otimes \frac{\partial}{\partial z}
\end{equation}
is a Dolbeault representative of a Schiffer variation at $x$. The map
\begin{equation}\label{Schiffer}
\xi_x\, :\, (T_C^{\otimes 2})_x\, \longrightarrow \,H^1(C,\, T_C),\ \ \ 
\lambda \left(\frac{\partial}{\partial z} \right)_x^{\otimes 2}\,\longmapsto\,\lambda [\theta_x]
\end{equation}
actually does not depend on the choice of the coordinate function $z$.

\subsection{The local system $\bC_\alpha$}\label{local-system}

Let $C$ be as above. Given a holomorphic line bundle $\alpha$ of
degree zero on $C$, there exists a Hermitian metric $H$ on $\alpha$
together with a flat connection $D_H$ on $\alpha$ which is compatible
with both the holomorphic structure and the Hermitian metric (see
e.g. \cite{gh}). The metric $H$ is unique up to multiplication by a
positive constant. This implies that the connection $D_H$ is unique. Take
a nontrivial holomorphic line bundle $\alpha$ of order two. Fix a holomorphic
isomorphism
$\rho_\alpha\, :\, \alpha^{\otimes 2}\,
\stackrel{\sim}{\longrightarrow} \, {\mathcal O}_C$, and let
$\pi\,:\, \widetilde{C}\, \longrightarrow \,C$ be the associated
unramified nontrivial double covering of $C$; note that
$\widetilde{C}\,=\, (\rho_\alpha)^{-1}(1_C)\, \subset\, \alpha$, where $1_C$ is the constant
function $1$ on $C$. In this case, we can take local
holomorphic trivializations $\{(U_{i},\, s_{i})\}$ of $\alpha$ such
that the images of the sections $s_{i}$ actually lie in
$\widetilde{C}\, \subset\, \alpha$. Then the cocycle
$g_{i,j} \,=\, s_{i}/s_{j}$ has values in $\{\pm1\}$, so it induces
a flat Hermitian metric, and a compatible flat connection, on $\alpha$.
This Hermitian metric given by the cocycle coincides with $H$ up to
the multiplication by a constant, and the flat connection coincides with $D_H$ (see
\cite{ko}).

If $\nu$ is a holomorphic frame of $\alpha$ over an open subset 
$U\, \subset\, C$, then the metric is encoded in the positive
function $h\,:=\,||\nu||_H^2$ and on $U$ we have
\begin{gather}\label{DpH}
D'_H (f \nu)\ =\ \left( \partial f + f \frac{\partial h}{h}\right) \otimes \nu
\end{gather}
for any smooth function $f$, where $D'_H$ is the $(1,\,0)$ type component
of $D_H$. The flatness condition of $D_H$ is equivalent to
the fact that the function $\partial h / h$ is holomorphic.

Write $D_H \,=\, D'_H\, +\, \overline{\partial}_\alpha$, where $\overline{\partial}_\alpha$ is the
Dolbeault operator on $\alpha$ (the $(0,\,1)$ type component of $D_H$). Such a
pair $(\alpha,\,H)$, which is also called a harmonic line bundle,
possesses the following properties (see \cite[Section 4.C and Remarque
8.1]{de}):

\begin{itemize}
\item The $D_H$--flat sections of $\alpha$ form a local system on $C$;
this local system is denoted by $\bC_\alpha$.

\item There is a de Rham complex for $\alpha$-valued forms:
\begin{gather*}
0\, \la\, \cA^0(\alpha) \,\xrightarrow{\,\,\,D_H\,\,\,}\, \cA^1(\alpha)\,\xrightarrow{\,\,\,D_H\,\,\,}
\, \cA^2(\alpha) \,\la\, 0.
\end{gather*}
\item The cohomology of this complex, denoted by
$H^p_{D_H}(C, \,\bC_\alpha) $, is isomorphic to the cohomology of the local system $\bC_\alpha$.
\item The analogue of the Hodge theorem holds; in other words,
there is an associated harmonic decomposition
$${\mathcal A}^{\bullet}(\alpha)\, =\, {\mathcal H}^{\bullet}(\alpha) \oplus im(D_H) \oplus im(D^*_H)
\,=\, {\mathcal H}^{\bullet}(\alpha) \oplus im(\overline{\partial}_\alpha)
\oplus im(\overline{\partial}^*_\alpha),$$ where ${\mathcal H}(\alpha)$ is
the kernel of the Laplacian operator
$\Delta \,=\, D_H D^*_H + D^*_H D_H \,=\,
2(\overline{\partial}_\alpha\overline{\partial}^*_\alpha+
\overline{\partial}^*_\alpha\overline{\partial}_\alpha)$.

\item The K\"ahler identities hold for the operators $D'_H$ and $\debar$.

\item The following Hodge decomposition holds:
\begin{equation}
\label{hodgedec}
H^1_{D_H}(C,\, {\mathbb C}_{\alpha})\, \cong\,
H^{1,0}_{\overline{\partial}_\alpha}(C,\, {\alpha}) \oplus H^{0,1}_{\overline{\partial}_\alpha}
(C,\, {\alpha})\, \cong\, H^0(C,\,K_C \otimes \alpha) \oplus H^1(C,\, \alpha).
\end{equation}

\item The following principle of two types holds:
$$ker(D'_H) \cap ker(\overline{\partial}_\alpha) \cap (im(D'_H) + im(\overline{\partial}_\alpha))
\ =\ im(D'_H \overline{\partial}_\alpha).$$

\item The Mayer-Vietoris sequence holds for the cohomology $H^*_{D_H}$.
\item Poincar\'e duality holds:
$H^p_{D_H} (C,\, \bC_\alpha)\ \cong\ H^{2-p}_{D_H} (C,\, \bC_\alpha)^*$.
\end{itemize}

\begin{lemma}\label{hyperplane}
For any $x\,\in\, C$, the map
$ H^1_{D_H}(C,\, {\mathbb C}_{\alpha})\, \la \, H^1_{D_H}(C\setminus \{x\}
,\, {\mathbb C}_{\alpha})$ induced by the inclusion map $C\setminus \{x\}\, 
\hookrightarrow\, C$ is injective and it image has codimension 1.
\end{lemma}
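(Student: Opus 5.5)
We will use the Mayer--Vietoris sequence for the cohomology $H^*_{D_H}$ of the local system $\bC_\alpha$, listed among the properties above. Choose a small coordinate disc $U$ around $x$, with $U$ contractible. Then $C \,=\, (C\setminus\{x\}) \cup U$ and $(C\setminus\{x\})\cap U \,=\, U\setminus\{x\}$ is a punctured disc. On the contractible set $U$ the local system $\bC_\alpha$ is trivial, so $H^0_{D_H}(U,\bC_\alpha)\,=\,\bC$ and $H^p_{D_H}(U,\bC_\alpha)\,=\,0$ for $p>0$. The punctured disc deformation retracts to a circle, and the restriction of $\bC_\alpha$ to it is trivial, since it extends over the disc. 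Hence
$$H^0_{D_H}(U\setminus\{x\},\bC_\alpha)\,=\,H^1_{D_H}(U\setminus\{x\},\bC_\alpha)\,=\,\bC,\qquad H^2_{D_H}(U\setminus\{x\},\bC_\alpha)\,=\,0 .$$

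Next we record the global cohomology. Since $\alpha$ is nontrivial, $\bC_\alpha$ has no nonzero global flat sections, so $H^0_{D_H}(C,\bC_\alpha)\,=\,0$. By Poincar\'e duality, $H^2_{D_H}(C,\bC_\alpha)\,\cong\, H^0_{D_H}(C,\bC_\alpha)^*\,=\,0$. For the open surface, $C\setminus\{x\}$ retracts onto a wedge of circles, so $H^2_{D_H}(C\setminus\{x\},\bC_\alpha)\,=\,0$. Also $H^0_{D_H}(C\setminus\{x\},\bC_\alpha)\,=\,0$: a flat section on $C\setminus\{x\}$ has trivial monodromy around $x$, so it extends across $x$ and must vanish.

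Now write out the Mayer--Vietoris sequence:
$$0\to H^0(C)\to H^0(C\setminus\{x\})\oplus H^0(U)\to H^0(U\setminus\{x\})\to H^1(C)\to H^1(C\setminus\{x\})\oplus H^1(U)\to H^1(U\setminus\{x\})\to H^2(C)\to\cdots$$
Substituting the values above, it becomes
$$0\to 0\to \bC\xrightarrow{\ \sim\ }\bC\to H^1(C)\to H^1(C\setminus\{x\})\to \bC\to 0 .$$
Here $H^0(U)\to H^0(U\setminus\{x\})$ is the restriction of constants, so it is an isomorphism. Exactness then shows that the connecting map $H^0(U\setminus\{x\})\to H^1(C)$ is zero, so the restriction $H^1_{D_H}(C,\bC_\alpha)\to H^1_{D_H}(C\setminus\{x\},\bC_\alpha)$ is injective. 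The same sequence shows that its cokernel is isomorphic to $H^1(U\setminus\{x\})\,\cong\,\bC$, because the next term $H^2(C)$ vanishes. So the image has codimension $1$.

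The main point needing care is the justification of the local computations and of the vanishing of $H^0$ and $H^2$. The first set of facts is that $H^*_{D_H}$ agrees with the cohomology of the local system and is a homotopy invariant, and that the restriction of $\bC_\alpha$ to $U$ and to $U\setminus\{x\}$ is trivial. These follow from the stated isomorphism with local system cohomology. The second is the vanishing of $H^0$ and $H^2$, which uses the nontriviality of $\alpha$ together with Poincar\'e duality.

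As a consistency check, one can compute Euler characteristics. Since $\chi(C,\bC_\alpha)\,=\,\chi(C)\,=\,2-2g$, we get $\dim H^1(C)\,=\,2g-2$. Similarly $\dim H^1(C\setminus\{x\})\,=\,-\chi(C\setminus\{x\})\,=\,2g-1$, which matches the codimension computed above.
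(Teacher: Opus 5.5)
Your proof is correct and follows the same argument as the paper: Mayer--Vietoris for the cover $C=(C\setminus\{x\})\cup U$ with $U$ a disc around $x$, using $H^1_{D_H}(U,\bC_\alpha)=0$, $H^0_{D_H}(U,\bC_\alpha)\cong H^0_{D_H}(U\setminus\{x\},\bC_\alpha)\cong H^1_{D_H}(U\setminus\{x\},\bC_\alpha)\cong\bC$, and $H^2_{D_H}(C,\bC_\alpha)=0$ by Poincar\'e duality together with the nontriviality of $\alpha$. Your extra computations of $H^0_{D_H}(C\setminus\{x\},\bC_\alpha)=0$ and $H^2_{D_H}(C\setminus\{x\},\bC_\alpha)=0$ are right but not needed, since injectivity already follows from the surjectivity of the restriction $H^0_{D_H}(U,\bC_\alpha)\to H^0_{D_H}(U\setminus\{x\},\bC_\alpha)$.
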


\begin{proof}
Let $U$ be a disc around the point $x$, and let $V\,:=\, C\setminus\{x\}$.
Since $\bC_\alpha|_{U}$ is trivial, we have
$$H^0_{D_H}(U,\,\bC_\alpha)\, = \,H_{D_H}^0(U\cap V,\,\bC_\alpha)\, =\,
\bC_{\alpha,x}.$$ Poincar\'e duality gives that
$H_{D_H}^2(C,\,\bC_\alpha) \,\cong\, H_{D_H}^0(C,\, \bC_\alpha)^* $. Since $\alpha$ is non-trivial, the
latter space is trivial. Therefore, by applying
Mayer-Vietoris to $U,\,V$ we get the following:
\begin{gather*}
0 \,\la\, H^1_{D_H}(C,\, \bC_\alpha) \,\la\, H^1_{D_H}(U,\, \bC_\alpha)\oplus H^1_{D_H}(V,\,
\bC_\alpha)\,\la\, H^1_{D_H}(U\cap V,\, \bC_\alpha) \,\la\, 0.
\end{gather*}
Now $ H^1_{D_H}(U,\, \bC_\alpha) \,=\, 0$ because $U$ is simply connected and
$H^1_{D_H}(U\cap V,\, \bC_\alpha)\,=\, \bC_{\alpha,x}$.
Thus $H^1_{D_H}(C,\, \bC_\alpha)$ is a hyperplane in $H^1_{D_H}(U,\, \bC_\alpha)$, and the corresponding
quotient is the line $\bC_{\alpha,x}$.
\end{proof}

\subsection{Second fundamental form as Hodge Gaussian map}\label{se2.4}

In this section Hodge-Gaussian maps are recalled from \cite{cpt}. The 
second fundamental form (constructed in \eqref{secondP}) coincides with a
generalized Hodge-Gaussian map \cite{cfadv}, which is then used in
describing $\rho_P$.

We start by describing the procedure to construct Hodge-Gaussian maps.
Fix a basis $\{\omega_1,\,\cdots,\, \omega_{g-1}\}$ of
$H^0(C,\, K_C \otimes \alpha)$; write, locally,
\begin{gather}\label{fi}
\omega_i\ \,=\,\ f_i(z) dz \otimes \nu,
\end{gather}
where $\nu$ as before is a locally defined holomorphic frame of $\alpha$. Fix
\begin{gather}\label{representationQ}
Q\, = \,\sum_{i,j} a_{ij} \omega _i \otimes \omega _j \,\in\, I_2(K_C\otimes\alpha)
, \ \ \ \text{ and } \ \ \ \xi \,\in\, H^1(C,\, T_C),
\end{gather}
where $I_2(K_C\otimes\alpha)$ is defined in \eqref{I2}. Let $\widetilde \xi \,\in\, \cA^{0,1}
(T_C)$ be such that $[\widetilde \xi ]\, \,=\xi$. For all $1\,\leq\, i\, \leq\, g-1$, using
the natural duality pairing $T_C\otimes K_C\, \longrightarrow\, {\mathcal O}_C$, write
\begin{gather}\label{HG1}
\widetilde \xi \cdot \omega_i\ \,=\ \, \gamma_i + \overline{\partial}_\alpha h_i,
\end{gather}
where $\gamma_i \,\in\, {\mathcal A}^{0,1}(\alpha)$ is harmonic and
$h_i$ is a section of $\alpha$. Set
\begin{gather}\label{HG}
\HG (Q \otimes \xi )\ :=\ \sum_{i,j} a_{ij} \omega_i D'_H ( h_j).
\end{gather}
Since $\nu^{\otimes 2}$ is a scalar function, the right hand side is a
section of $K_C^{\otimes 2}$.

\begin{theorem}[\protect{\cite[Theorem 4.4]{cpt}}]
The map
\begin{gather*}
\HG\ :\ I_2(K_C\otimes \alpha) \otimes H^1(C,T_C)\ \la\ H^0(C,\,K_C^{\otimes 2})
\end{gather*}
is well-defined (see \eqref{I2} for $I_2(K_C\otimes\alpha)$);
in other words, it does not depend on the choice of the
representative $\widetilde \xi$, on the choice of $h_j$ and on the expression
\eqref{representationQ} of $Q$. Furthermore, $\HG$ is a linear map.
\end{theorem}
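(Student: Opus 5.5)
We need to show that $\HG(Q\otimes\xi)$ is independent of the choice of $\widetilde\xi$, of the $h_j$ and of the expression of $Q$, that it is holomorphic, and that it is linear. The strategy is to isolate, at each step, a term that vanishes because $\sum_{i,j} a_{ij}\,\omega_i\omega_j=0$ as a section of $K_C^{\otimes 2}$.

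\textbf{Choice of $h_j$.} In \eqref{HG1} the harmonic part $\gamma_j$ is uniquely determined by $\widetilde\xi\cdot\omega_j$. Hence $h_j$ is determined up to a $\overline{\partial}_\alpha$-closed section, that is, a global holomorphic section of $\alpha$. Since $\alpha$ is nontrivial of degree zero, $H^0(C,\alpha)=0$, so $h_j$ is unique.

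\textbf{Choice of $\widetilde\xi$.} Any other representative is $\widetilde\xi+\overline{\partial} v$ with $v$ a smooth vector field. Then $\overline{\partial} v\cdot\omega_j=\overline{\partial}_\alpha(v\cdot\omega_j)$, because $\omega_j$ is holomorphic. So $\gamma_j$ is unchanged and $h_j$ is replaced by $h_j+v\cdot\omega_j$. The resulting change in \eqref{HG} is
$$\sum a_{ij}\,\omega_i\, D'_H(v\cdot\omega_j).$$
Write $v=\phi\,\partial_z$ locally and use \eqref{fi}, so that $v\cdot\omega_j=\phi f_j\nu$. By \eqref{DpH} the change becomes
$$\sum a_{ij} f_i\bigl(\partial(\phi f_j)+\phi f_j\,\partial h/h\bigr)\,dz\,\nu^2.$$
The terms $\phi f_i f_j$ and $f_if_j\,\partial\phi$ sum to zero because $\sum a_{ij}f_if_j=0$. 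The remaining term is $\phi\sum a_{ij}f_i f_j'$. It vanishes since $Q$ is symmetric, which we may assume, so $\sum a_{ij}f_if_j'=\tfrac12\partial\bigl(\sum a_{ij}f_if_j\bigr)=0$. This symmetrization is the key point.

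\textbf{Expression of $Q$ and linearity.} Once the $h_j$ are canonical, the map $\omega\mapsto h_\omega$ is linear. So the formula defines a bilinear expression in $\omega_i\otimes\omega_j$, applied to the tensor $Q$; it therefore depends only on $Q$ and not on the chosen basis or coefficients. Linearity in $Q$ and in $\xi$ then follows from the linearity of the harmonic decomposition.

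\textbf{Holomorphicity.} This is the main step. Compute, again with $\widetilde\xi=\phi\,\partial_z\,d\bar z$ locally:
$$\overline{\partial}\sum a_{ij}\,\omega_i D'_H h_j=-\sum a_{ij}\,\omega_i\, D'_H\overline{\partial}_\alpha h_j,$$
up to sign conventions, using the Kähler identities / flatness ($D'_H\overline\partial_\alpha=-\overline\partial_\alpha D'_H$). Now $\overline{\partial}_\alpha h_j=\widetilde\xi\cdot\omega_j-\gamma_j$.

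The $\widetilde\xi\cdot\omega_j$ contribution is
$$\sum a_{ij} f_i\,\partial(\phi f_j)=\phi\sum a_{ij}f_if_j'+\partial\phi\sum a_{ij}f_if_j=0,$$
by the same argument as above, the connection term cancelling likewise.

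The $\gamma_j$ contribution needs more care. Here I would use that harmonic $(0,1)$-forms with values in a flat unitary bundle satisfy $D'_H\gamma_j=0$; this is the conjugate of holomorphicity of the harmonic representative. So this contribution also vanishes. Hence $\HG(Q\otimes\xi)\in H^0(C,K_C^{\otimes 2})$.
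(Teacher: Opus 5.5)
Your argument is correct. The paper gives no proof of its own here, only the citation to \cite[Theorem 4.4]{cpt}, and your direct verification rests on the expected three ingredients: uniqueness of $h_j$ (here without even the additive-constant ambiguity of the untwisted case, since $H^0(C,\alpha)=0$), the identity $\sum a_{ij}f_if_j'=\tfrac12\,\partial\bigl(\sum a_{ij}f_if_j\bigr)=0$, and $D'_H\gamma_j=0$ for harmonic $\gamma_j$, which the paper itself uses in Section~\ref{se2.4}. One small correction of emphasis: the symmetry $a_{ij}=a_{ji}$ is not something you ``may assume''. It is built in, because $I_2(K_C\otimes\alpha)$ is regarded as a space of symmetric tensors in $H^0(C,K_C\otimes\alpha)^{\otimes 2}$, and it is genuinely needed: for antisymmetric coefficients the Wronskian-type term $\phi\sum a_{ij}f_if_j'$ survives, and the expression would depend on $\widetilde\xi$.
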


\begin{theorem}[\protect{\cite{cfadv}}]
With $\rho_P$ being understood as in \eqref{nrp},
$$\rho_P\ =\ -2 \HG.$$
\end{theorem}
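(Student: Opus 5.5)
The plan is to follow the strategy used for the Torelli map by Colombo, Pirola and Tortora, transposed to the local system $\bC_\alpha$. The Prym variety $P(C,\alpha)$ has period lattice coming from the anti-invariant part of $H^1(\widetilde C,\bZ)$, which is identified with $H^1_{D_H}(C,\bC_\alpha)$. By \eqref{hodgedec}, this carries the weight one polarized Hodge structure $H^0(C,K_C\otimes\alpha)\oplus H^1(C,\alpha)$, with polarization given by the cup product together with the trivialization $\rho_\alpha$ of $\alpha^{\otimes 2}$. Hence, locally on $\cR^0_g$, the Prym map is the period map
\[
[(C_t,\alpha_t)]\ \longmapsto\ H^0(C_t,K_{C_t}\otimes\alpha_t)\subset H^1_{D_H}(C,\bC_\alpha)
\]
into the Siegel space $\cH_{g-1}$.

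\textbf{Step 1 (reduction to a second derivative).} Use Harish-Chandra coordinates on $\cH_{g-1}$ centered at the image point. These identify a neighbourhood with an open set of ${\rm Sym}^2 (H^0(C,K_C\otimes\alpha))^*$. In such coordinates the Siegel metric osculates the flat metric to second order, so the Christoffel symbols of $\nabla$ vanish at the centre. Consequently $II_P(\xi,\xi')$, paired with $Q\in I_2(K_C\otimes\alpha)$, equals the pairing of $Q$ with the mixed second derivative $\partial_\xi\partial_{\xi'}$ of the period map in these coordinates. Only the normal part survives, precisely because $Q$ annihilates the image of the differential, which is the multiplication map \eqref{mm}.

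\textbf{Step 2 (first and second order deformation of the forms).} Take a two-parameter family with Beltrami differential $t\widetilde\xi+s\widetilde\xi'$. Since $\alpha$ is $2$-torsion, it deforms uniquely together with its flat structure, so $\bC_\alpha$ stays fixed. Write the deformed classes of $\omega_i$ in $H^1_{D_H}(C,\bC_\alpha)$ as
\[
\omega_i+t\,(\widetilde\xi\cdot\omega_i-\overline{\partial}_\alpha h_i)+O(2),
\]
using \eqref{HG1}. The first-order term is $\gamma_i$, the harmonic representative of the cup product $\xi\cdot\omega_i$; this recovers the differential. The mixed second-order term is obtained by imposing that the deformed form be of type $(1,0)$ for the deformed complex structure. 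Then pair the $(0,1)$-part of that term with $\omega_j$, weighted by $a_{ij}$.

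\textbf{Step 3 (identification with $\HG$).} The contributions in which $\widetilde\xi$ and $\widetilde\xi'$ enter symmetrically through products $\omega_i\omega_j$ cancel, since $\sum a_{ij}\omega_i\omega_j=0$. What remains is
\[
\int_C \sum_{i,j} a_{ij}\,\omega_i\, D'_H(h_j)\cdot\widetilde\xi' ,
\]
which under the Serre duality pairing is $\langle \HG(Q\otimes\xi),\xi'\rangle$. To justify discarding or moving the terms involving $h_i$, use integration by parts with the Kähler identities for $D'_H,\overline{\partial}_\alpha$ and the principle of two types. These also show that $\sum_{i,j} a_{ij}\omega_i D'_H h_j$ is holomorphic; this is the well-definedness statement of \cite[Theorem 4.4]{cpt}. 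The constant $-2$ arises from the symmetrization ${\rm Sym}^2$ and the normalization of the Siegel metric in the Harish-Chandra chart, and I would fix it by comparing with the known Torelli case.

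I expect the main obstacle to be the second-order computation in Step 2. Tracking the deformed holomorphic $\alpha$-twisted forms to second order in the presence of the non-trivial flat structure, and showing that every correction other than $\omega_i D'_H h_j$ is either exact or killed by $Q\in I_2$, is the genuinely delicate part. I would try first to carry it out using Schiffer variations $\xi_x$, as in \eqref{Schiffer}, where $h_i$ has an explicit local form ($h_i=f_i/z$ near $x$, up to a global correction). This makes the residue computation transparent; the general case then follows by linearity and density of Schiffer variations.
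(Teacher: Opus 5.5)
The paper does not prove this statement; it only refers to \cite{cfadv}, so your outline has to be judged on its own.

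\textbf{Steps 1 and 2 are sound as a plan.} In Harish-Chandra coordinates centred at $H^0(C,K_C\otimes\alpha)$ the invariant metric is flat up to $O(|Z|^2)$. Hence $\rho_P(Q)(\xi,\xi')=\langle Q,\partial_t\partial_s Z\rangle$, and $\bC_\alpha$ is fixed along the family.

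\textbf{The gap is in Step 3.} That is exactly where the constant $-2$, the real content of the statement, is decided, and your bookkeeping there is wrong. Two terms survive, not one, and the factor $2$ is their equality. The factor cannot come from ``the normalization of the Siegel metric'': rescaling the metric changes neither the Chern connection nor $II_P=p(\nabla_XY)$, and $p$ involves no metric. Fixing the constant ``by comparison with the Torelli case'' is not a derivation either.

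\textbf{How the two terms arise.} First, the form that is closed and of type $(1,0)$ to first order is $\omega_i+t(\gamma_i+D_Hh_i)$, not $\omega_i+t\gamma_i$. Its $(1,0)$-part $D'_Hh_i$ is what feeds the second order. Second, its class moves only by $[\gamma_i]\in H^1(C,\alpha)$, so you are in the Harish-Chandra graph normalization. This is what lets you read $\partial_t\partial_sZ$ off the $(0,1)$-part of the mixed term. With an unnormalized frame you would have to subtract $\partial_tZ$ and $\partial_sZ$ applied to the $H^0(C,K_C\otimes\alpha)$-components of the first-order variations, and $Q\in I_2$ does not kill those terms.

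Write $\widetilde\xi'\cdot\omega_i=\gamma_i'+\debar_\alpha h_i'$ as in \eqref{HG1}, and expand $\omega_i(t,s)=\omega_i+t\phi_i+s\phi_i'+ts\,\phi_i''+\cdots$. The type condition $\omega^{0,1}=(t\widetilde\xi+s\widetilde\xi')\cdot\omega^{1,0}$ together with $D_H$-closedness gives $\phi_i=\gamma_i+D_Hh_i$, $\phi_i'=\gamma_i'+D_Hh_i'$, and
\[
(\phi_i'')^{0,1}=\widetilde\xi\cdot D'_Hh_i'+\widetilde\xi'\cdot D'_Hh_i,
\qquad
\rho_P(Q)(\xi,\xi')=\sum_{i,j}a_{ij}\int_C\phi_i''\wedge\omega_j=\langle\HG(Q\otimes\xi'),\xi\rangle+\langle\HG(Q\otimes\xi),\xi'\rangle,
\]
up to sign conventions.

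\textbf{Why the two terms coincide.} Since $D'_H\gamma_i'=0$, one has $\langle\HG(Q\otimes\xi),\xi'\rangle=\sum a_{ij}\int_C\debar_\alpha h_i'\wedge D'_Hh_j$. Stokes together with $D'_H\debar_\alpha=-\debar_\alpha D'_H$ gives
\[
\int_C\debar_\alpha u\wedge D'_Hv=\int_C\debar_\alpha v\wedge D'_Hu
\]
for smooth sections $u,v$ of $\alpha$, paired through the trivialization of $\alpha^{\otimes 2}$. Applying this and the symmetry $a_{ij}=a_{ji}$ shows the two terms are equal, so you get $\pm 2\HG$.

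\textbf{The sign.} It must be read off from the conventions: the Kodaira--Spencer sign, the Serre pairing in \eqref{nrp}, and the identification $H^1(C,\alpha)\cong H^0(C,K_C\otimes\alpha)^*$. These have to be fixed consistently with $(d\cP)^*=m$.

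Finally, the reduction to Schiffer variations is unnecessary: the computation above is global and works for arbitrary $\xi,\xi'$.
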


See the proof of Theorem 2.1 in \cite{cfadv}.

We apply this to the case of a Schiffer variation
as in \eqref{Schiffer}. Let $b_x$ and $ \theta_x$ be as in
\eqref{bx} and \eqref{etheta} respectively. Then
\begin{equation}\label{f2}
\gamma_i + \overline{\partial}_\alpha h_i
\,\, =\,\, \theta_x ( \omega_i)\nu\,\, =\,\, 
\frac{\overline{\partial} b_x}{z-z(x)} f_i(z) \nu\,\, =\, 
\, \overline{\partial}(\frac{b_x f_i}{z-z(x)}) \otimes \nu.
\end{equation}
Set
\begin{gather} \label{def:g_i} g_i\,: =\, \frac{b_x f_i}{z-z(x)} \cdot
\nu - h_i, \qquad \eta_i\,:=\, D'_H g_i.
\end{gather}
On $C \setminus \{x\}$ we have
$\gamma_i \,=\, \overline{\partial}_\alpha g_i$, and
\begin{gather}\label{exact}
\gamma_i + \eta_i \ =\ \overline{\partial}_\alpha g_i + D'_H g_i \ =\ D_H g_i.
\end{gather}
It follows from \eqref{def:g_i} that
\begin{gather*}
D_H \, \eta_i \,=\,D_H ( D'_H g_i)\,=\, {\debar}_\alpha D_H' g_i\, =\, - D_H' {\debar}_\alpha g_i
\, =\, -D'_H \gamma _ i\, =\, 0
\end{gather*}
because $\gamma_i$ is harmonic. So $\eta_i$ defines a class in
$H^1_{D_H}(C\setminus \{x\},\, \bC_\alpha)$. From \eqref{exact} it follows
that $[\eta_i]\, =\, - [\gamma_i]$ as elements of
$H^1_{D_H}(C\setminus \{x\},\, \bC_\alpha)$. Moreover, $[\gamma_i]$
lies in the subspace $H^{0,1}_\debar (C,\,\alpha) \,\subset\, H^1_{D_H}(C,\, \bC_\alpha)$,
which is a hyperplane by Lemma \ref{hyperplane}, so we have
\begin{equation}\label{etai}
[\eta_i]\, =\, - [\gamma_i] \,\in\,
H^{0,1}_{\overline{\partial}}(C,\,\alpha)\, \subset\,
H^1_{D_H}(C,\, {\mathbb C}_{\alpha}) \,\subset
\, H^1_{D_H}(C\setminus \{x\},\, {\mathbb C}_{\alpha}).
\end{equation}

\begin{theorem}\label{ths}
Given a quadric
$Q\, =\, \sum_{i,j =1}^{g-1} a_{ij} (\omega_i \otimes \omega_j )
\,\in\, I_2(K_C \otimes \alpha)$ and $x\,\in\, C$,
fix a holomorphic coordinate function $z$ around $x$ and write locally
the sections $h_i$ in \eqref{f2} as $h_i\, =\, l_i \nu$, where $l_i$ is a function. Then
\begin{gather}\label{rho-eta}
\rho_P(Q\otimes\xi_x(\frac{\partial}{\partial z}(0)^{\otimes 2})) \ =\ -\sum_{i,j} a_{ij} \omega_i\eta_j,\\
\label{eta_i}
\eta_i\ =\ \left(- \frac{f_i(z(x))}{(z-z(x))^2} + \frac{\partial
(\log l_i)}{\partial z}(z(x))\frac{f_i(z(x))}{z-z(x)} +
\psi_i(z)\right) \nu \otimes dz,
\end{gather}
where $\rho_P$, $f_i$, $\xi_x$ are as in \eqref{nrp}, \eqref{fi},
\eqref{Schiffer} respectively, and $\psi_i$ is some local holomorphic function.
\end{theorem}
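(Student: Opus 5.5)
The proof splits into the two displayed formulas, each handled by direct local computation from the definitions in \eqref{HG1}--\eqref{def:g_i}.

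\textbf{Formula \eqref{rho-eta}.} Start from $\rho_P = -2\HG$. Take the representative $\widetilde\xi = \theta_x$ of $\xi_x(\frac{\partial}{\partial z}(0)^{\otimes 2})$, so that \eqref{f2} holds, and compute
$$\HG(Q\otimes\xi) \,=\, \sum_{i,j} a_{ij}\,\omega_i D'_H(h_j).$$
By \eqref{def:g_i}, $h_j = \frac{b_x f_j}{z-z(x)}\nu - g_j$, hence $D'_H h_j = D'_H\!\left(\frac{b_xf_j}{z-z(x)}\nu\right) - \eta_j$. The terms coming from $\frac{b_xf_j}{z-z(x)}\nu$ cancel after contraction with $a_{ij}$. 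Indeed, outside the locus where $b_x\equiv 1$ the first sum is $\sum a_{ij}\,\omega_i\,\partial\!\left(\frac{b_x}{z-z(x)}\right)f_j\,\nu$ plus a term involving $\frac{b_x}{z-z(x)}\sum a_{ij}f_i\,D'_H(f_j\nu)$. The first piece vanishes because $\sum a_{ij}f_if_j=0$, which is the statement $Q\in I_2$. For the second piece, I will use the symmetry of $a_{ij}$ and the relation $\sum a_{ij}(f_i f_j' + f_i f_j\,\partial h/h)=\tfrac12\partial(\sum a_{ij} f_if_j)=0$, where $\partial h/h$ is the term in \eqref{DpH}. Combining these gives $\HG = -\sum a_{ij}\omega_i\eta_j$.

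\textbf{The factor of two.} A naive count now produces $\rho_P = 2\sum a_{ij}\omega_i\eta_j$, which disagrees with \eqref{rho-eta} by a factor of $-2$. I therefore expect to have to recheck this step against the proof of \cite{cfadv}. Most likely the cancellation only gives half, because the symmetrization $\sum a_{ij}\omega_i D'_H(\cdot)$ doubles or halves the relevant terms, and I would settle the constant carefully there. The sign and normalization conventions of Theorem 2.1 in \cite{cfadv} are also needed to pin it down. This normalization is the main obstacle.

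\textbf{Formula \eqref{eta_i}.} Near $x$ we have $b_x\equiv1$, so $g_i=\left(\frac{f_i}{z-z(x)} - l_i\right)\nu$. Apply \eqref{DpH} with $h=\|\nu\|^2$ chosen so that $\partial h/h$ is holomorphic; it can even be taken $0$ for a flat frame $\nu$, since the cocycle is $\pm1$. This gives
$$\eta_i=\left(\frac{f_i'}{z-z(x)} - \frac{f_i}{(z-z(x))^2} - \partial l_i + \cdots\right)\nu\otimes dz .$$
Expand $f_i(z)=f_i(z(x))+f_i'(z(x))(z-z(x))+\cdots$, so the polar part is
$$-\frac{f_i(z(x))}{(z-z(x))^2} \,+\, \frac{0\cdot f_i'(z(x))}{z-z(x)},$$
the two $f_i'(z(x))$ contributions cancelling, and the remainder is holomorphic.

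Two points remain. First, $l_i$ is smooth but not necessarily holomorphic. However, $\overline\partial l_i$ equals the harmonic $\gamma_i$ locally, and that term does not contribute to the $(1,0)$-part, so $\eta_i$ is meromorphic. Second, the residue term $\frac{\partial(\log l_i)}{\partial z}(z(x))\frac{f_i(z(x))}{z-z(x)}$ must be identified. My plan is to note that it comes from the normalization of $D'_H$ via $\partial h/h$ in \eqref{DpH}, evaluated at $x$, with the authors writing this coefficient in terms of $l_i$. I would redo the computation with a general frame $\nu$ to match their exact form.
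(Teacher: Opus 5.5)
Your route is the one the paper takes. The paper gives no argument of its own beyond citing the local computation of Lemma~3.2 and p.~145 of \cite{cpt}, which is exactly what you carry out, and your computations are correct. The two points you leave open are not gaps in your argument, and the repairs you propose would not work: they are slips in the displayed statement.

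\textbf{The constant in \eqref{rho-eta}.} There is no lost ``half'', because the cancellation is exact. Work on $U\setminus\{x\}$, where $U$ is the chart carrying $b_x$, and set $\lambda=\frac1h\frac{\partial h}{\partial z}$ with $h=\|\nu\|_H^2$. Then
\[
\sum_{i,j}a_{ij}\,\omega_i\,D'_H\Bigl(\frac{b_xf_j}{z-z(x)}\,\nu\Bigr)=\Bigl[\frac{\partial}{\partial z}\Bigl(\frac{b_x}{z-z(x)}\Bigr)F+\frac{b_x}{z-z(x)}\Bigl(\frac12F'+\lambda F\Bigr)\Bigr]\,dz^{\otimes 2}\otimes\nu^{\otimes 2},\qquad F:=\sum_{i,j}a_{ij}f_if_j ,
\]
using the symmetry of $(a_{ij})$. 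Since $F\equiv 0$, this vanishes identically. The $\lambda$-term dies because $F$ does; your identity folding it into $\frac12\partial(\cdot)$ is not literally correct.

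So you have proved exactly
\[
\HG\bigl(Q\otimes\xi_x(\tfrac{\partial}{\partial z}(0)^{\otimes 2})\bigr)=-\sum a_{ij}\omega_i\eta_j ,
\]
first on $C\setminus\{x\}$ and then on $C$, since the left side is smooth. With the normalization $\rho_P=-2\HG$ fixed in the paper, this gives $\rho_P(\ldots)=2\sum a_{ij}\omega_i\eta_j$. The printed \eqref{rho-eta} is the $\HG$ identity with $\rho_P$ written in its place. Once $\rho_P=-2\HG$ is granted, no convention from \cite{cfadv} can turn $2$ into $-1$. Only a universal nonzero constant is at stake, and the later uses (\eqref{rho_P}, Theorem~\ref{prodotto}, Corollary~\ref{inj}) are insensitive to it. State what you proved rather than hunting for a factor.

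\textbf{The residue in \eqref{eta_i}.} Your flat-frame computation is right: when $\|\nu\|_H$ is constant, the residue of $\eta_i$ at $x$ is $0$. For a general frame, near $x$ one has $g_i=\bigl(\frac{f_i}{z-z(x)}-l_i\bigr)\nu$, and by \eqref{DpH}
\[
D'_H\Bigl(\frac{f_i}{z-z(x)}\,\nu\Bigr)=\Bigl(\frac{f_i'}{z-z(x)}-\frac{f_i}{(z-z(x))^2}+\lambda\,\frac{f_i}{z-z(x)}\Bigr)dz\otimes\nu .
\]
Its polar part is $-\frac{f_i(z(x))}{(z-z(x))^2}+\lambda(z(x))\frac{f_i(z(x))}{z-z(x)}$.

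The other piece, $D'_H(l_i\nu)$, is smooth at $x$ and holomorphic on the punctured disc, because $\eta_i$ and the term above are. The correct reason is $\debar_\alpha\eta_i=-D'_H\gamma_i=0$ by harmonicity of $\gamma_i$ (note $\debar l_i\,\nu=-\gamma_i$ near $x$), not that $\debar l_i$ ``does not contribute to the $(1,0)$-part''. Hence $D'_H(l_i\nu)$ is holomorphic and is absorbed into $\psi_i$.

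The coefficient is therefore $\lambda(z(x))=\frac{\partial\log h}{\partial z}(z(x))$, not $\frac{\partial\log l_i}{\partial z}(z(x))$. In a flat frame the residue is $0$, while $\partial_z l_i(x)$ has no reason to vanish, and $\log l_i$ need not even be defined at $x$. Also, in \eqref{eta_p} the same coefficient is attached to $\suz$, which does not depend on $i$. The same $\lambda$ appears in the proof of injectivity of $(j_P)_x$, so the ``$l_i$'' is a misprint for $h=\|\nu\|_H^2$. Do the general-frame computation and state the result with $h$, instead of trying to reproduce the $l_i$ form.
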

Formula \eqref{rho-eta} follows from the same argument in Lemma 3.2 of
\cite{cpt} using $\alpha$ and $D_H$ instead of the trivial line bundle and
the de Rham differential $d$. Formula \eqref {eta_i} is proved in \cite[p.~145]{cpt}

\begin{proposition}
Assume that $x\,\in\, C$ is not a base point of $K_C\otimes \alpha$.
Then there is a well-defined element
\begin{equation}\label{sP2}
\s{z,\nu,x}\ \, \in\ \,H^0(C,\, K_C \otimes \alpha(2x))
\end{equation}
such that for any $i$,
\begin{gather}\label{eta_isP}
\eta_i\ =\ f_i(z(x)) \s{z,\nu,x} .
\end{gather}
Furthermore, near $x$,
\begin{equation}\label{eta_p}
s_{z,\nu,x}\ =\
\left(- \frac{1}{(z- z(x))^2} +
\frac{1}{z- z(x)}\frac{\partial (\log l_i)}{\partial z}(z(x)) + \phi(z)\right) \nu \otimes dz,
\end{equation}
where $\phi(z)$ is holomorphic function and $z$ is a holomorphic coordinate function around $x$.
\end{proposition}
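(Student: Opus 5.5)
The plan is to show that the forms $\eta_i$, a priori defined only on $C\setminus\{x\}$, are global meromorphic sections of $K_C\otimes\alpha(2x)$ lying in a one-dimensional family controlled by their polar part at $x$. There are three steps.

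\textbf{Step 1 (meromorphy).} On $C\setminus\{x\}$ the $\eta_i=D'_H g_i$ are of type $(1,0)$ and $D_H$-closed. Their $(0,1)$-part of $D_H\eta_i$ is $\debar_\alpha\eta_i=0$, so they are holomorphic sections of $K_C\otimes\alpha$ on $C\setminus\{x\}$. Near $x$, formula \eqref{eta_i} of Theorem \ref{ths} shows that $\eta_i$ has at most a double pole at $x$. Hence $\eta_i\in H^0(C,\,K_C\otimes\alpha(2x))$, with polar part determined by $f_i(z(x))$ and the residue coefficient $f_i(z(x))\,\partial_z(\log l_i)(z(x))$.

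\textbf{Step 2 (the subspace $W$).} Consider
\[
W:=\{s\in H^0(C,\,K_C\otimes\alpha(2x))\,:\,[s]\in H^{0,1}_{\debar}(C,\alpha)\subset H^1_{D_H}(C\setminus\{x\},\bC_\alpha)\}.
\]
By \eqref{etai}, each $\eta_i$ lies in $W$. I claim $\dim W=1$ and that the map $s\mapsto$ (coefficient of $(z-z(x))^{-2}$) is injective on $W$.

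For the injectivity, suppose $s\in W$ has no double pole. Its residue at $x$ vanishes, because the residue of a section of $K_C\otimes\alpha(x)$ with $\alpha$ nontrivial is zero by the residue theorem applied after pairing; equivalently, $H^0(K_C\otimes\alpha(x))=H^0(K_C\otimes\alpha)$, since $x$ is not a base point. So $s$ is a holomorphic element of $H^0(K_C\otimes\alpha)$, and its class lies in $H^{1,0}$. But it also lies in $H^{0,1}$, and by the Hodge decomposition \eqref{hodgedec} the intersection is zero. Since $H^1_{D_H}(C)\hookrightarrow H^1_{D_H}(C\setminus\{x\})$ by Lemma \ref{hyperplane}, this forces $s=0$.

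\textbf{Step 3 (definition of $s_{z,\nu,x}$).} Pick an index $i_0$ with $f_{i_0}(z(x))\neq 0$, which exists because $x$ is not a base point, and set $s_{z,\nu,x}:=\eta_{i_0}/f_{i_0}(z(x))$. For any $i$, the section $\eta_i-f_i(z(x))s_{z,\nu,x}$ lies in $W$ and has no double pole, so it vanishes by Step 2. This gives \eqref{eta_isP}. Independence of the choice of $i_0$, and of the choices of $h_i$ and $b_x$, follows by the same uniqueness argument, because any two candidates lie in $W$ and share the same double-pole coefficient $-1$. The expansion \eqref{eta_p} is then \eqref{eta_i} divided by $f_{i_0}(z(x))$.

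A consequence of Step 3 is that the residue coefficient $\partial_z(\log l_i)(z(x))$ is independent of $i$ whenever $f_i(z(x))\neq0$. This follows by comparing the expansions.

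The main obstacle is Step 2. It requires correctly identifying the class of a meromorphic form on $C\setminus\{x\}$ inside $H^1_{D_H}$, and justifying that a holomorphic $\alpha$-twisted form has a nonzero class in $H^{1,0}$, which is the Hodge-theoretic injectivity of $H^0(K_C\otimes\alpha)\to H^1_{D_H}$.
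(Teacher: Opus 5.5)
Your overall plan is sound, but Step 2 rests on a false claim, and it is exactly the delicate point. There is no residue theorem for $\alpha$-valued forms: pairing would need a nonzero section of $\alpha^{-1}\cong\alpha$, and there is none. By Riemann--Roch, $h^0(C,\,K_C\otimes\alpha(x))=g$ while $h^0(C,\,K_C\otimes\alpha)=g-1$ for every $x$; the paper records these dimensions in Section~\ref{se3}. So $H^0(C,\,K_C\otimes\alpha(x))\neq H^0(C,\,K_C\otimes\alpha)$, and there are twisted forms with a genuine simple pole at $x$, whatever the base locus of $K_C\otimes\alpha$. As written, nothing shows that an element of $W$ without a double pole is holomorphic. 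The injectivity on which Step 3 rests is therefore not established.

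The conclusion is still true, and the repair uses the defining property of $W$, which you never invoke at that point. If $s$ has at most a simple pole, then $\mathrm{Res}_x s\in\alpha_x$ is intrinsic. Computing in a flat (hence holomorphic) frame on a disc $U$ around $x$, $2\pi\sqrt{-1}\,\mathrm{Res}_x s$ is the image of $[s]$ in $H^1_{D_H}(U\setminus\{x\},\,\bC_\alpha)\cong\bC_{\alpha,x}$. Every class coming from $H^1_{D_H}(C,\,\bC_\alpha)$ restricts to zero there, because that restriction factors through $H^1_{D_H}(U,\,\bC_\alpha)=0$; this is the Mayer--Vietoris sequence in the proof of Lemma~\ref{hyperplane}. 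Since $[s]\in H^{0,1}_{\debar}(C,\,\alpha)\subset H^1_{D_H}(C,\,\bC_\alpha)$, the residue vanishes and $s\in H^0(C,\,K_C\otimes\alpha)$. The rest of your argument then goes through.

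Once repaired, your route genuinely differs from the paper's. The paper reads off from \eqref{eta_i} that $f_j(z(x))\eta_i-f_i(z(x))\eta_j$ extends across $x$, which tacitly requires the simple-pole terms to cancel as well as the double poles. It then kills this combination because, added to $f_j(z(x))\gamma_i-f_i(z(x))\gamma_j$, it is both $D_H$-harmonic and $D_H$-exact on $C$. Your cohomological version instead derives the vanishing of the residues from \eqref{etai} rather than assuming it. It also gives independence of the auxiliary choice of $b_x$ at no extra cost.
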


\begin{proof}
It follows from \eqref{eta_i} that the combination
$f_j(z(x)) \eta_i - f_i(z(x)) \eta_j$ has a continuous extension to
$C$ and this extension is $D_H$-harmonic. By \eqref{exact},
\begin{equation*}
f_j(z(x)) \eta_i - f_i(z(x)) \eta_j + f_j(z(x)) \gamma_i - f_i(z(x)) \gamma_j
\ =\ D_H \left(f_j(z(x)) g_i - f_i(z(x)) g_j \right).
\end{equation*}
Since the left-hand side is $D_H$-harmonic and the right-hand side
is $D_H$-exact, both actually vanish. In particular, the $(0,1)$-part of the
left-hand side is zero; thus
\begin{gather}\label{eq:4}
f_j(z(x)) \eta_i \ = \ f_i(z(x)) \eta_j.
\end{gather}
By assumption, there is $i$ such that $f_i(z(x)) \,\neq\, 0$. Set
\begin{gather}\label{eta:1}
\s{z,\nu,x}\ :=\ \frac{\eta_i}{f_i(z(x))} , \qquad \phi\ :=\ \psi_i / f_i(x).
\end{gather}
It follows from \eqref{eq:4} that these definitions are independent
on the choice of $i$. Moreover, if $f_j(z(x))\,=\,0$, then $\eta_j\,=\,0$.
This proves \eqref{eta_isP}. Formula \eqref{eta_p} follows
immediately from the definition.
\end{proof}

By the definition of $\s{z,\nu,x}$ and \eqref{rho-eta} we have 
\begin{equation}\label{rho_P}
\rho_P(Q\otimes\xi_x(\frac{\partial}{\partial z}(0)^{\otimes 2}))
\ =\ - \s{z,\nu,x} \cdot \sum_{i,j} a_{ij} f_i(z(x))
\omega_j\ \in\ H^0(C,\, K_C^{\otimes 2}).
\end{equation}

The expression in \eqref{eta_p} clearly depends on the the choices of
$z$ and $\nu$, but in a rather simple way.

\begin{proposition}
Fix a holomorphic coordinate function $z$ and a holomorphic frame $\nu$ of $\alpha$ around $x\, \in\, C$.
Let $\nu^*$ denote the dual frame. The map
\begin{gather}
\eta_{P,x}\,:\, T_xX \otimes \alpha_x\, \la \, H^0(C,\, K_C\otimes \alpha (2x)), \label{a2}\\
\notag \lambda \frac{\partial}{\partial z } \otimes
\nu \ \longmapsto\ \lambda \s{z,\nu,x},
\end{gather}
is independent of the choices of $z$ and $\nu$. Moreover, it is linear, hence
\begin{gather*}
\eta_{P,x} \ \in\ H^0(C,\, K_C \otimes \alpha(2x))\otimes
(K_C\otimes\alpha^*)_x.
\end{gather*}
\end{proposition}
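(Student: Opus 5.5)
We need to show that $s_{z,\nu,x}$ transforms under a change of coordinate $z$ and a change of frame $\nu$ exactly as $\frac{\partial}{\partial z}\otimes\nu$ does. Linearity in $\lambda$ is then built into the definition, so the map descends to a well-defined linear map on the one-dimensional space $T_xC\otimes\alpha_x$. The plan is to trace how each ingredient of the construction changes: the Schiffer representative $\theta_x$, the coefficients $f_i$, the sections $h_i$ and $g_i$, and the forms $\eta_i$.

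\emph{Frame change.} Replace $\nu$ by $\nu'=u\nu$, where $u$ is a nowhere vanishing local holomorphic function. Then $\omega_i=f_i\,dz\otimes\nu=f_i'\,dz\otimes\nu'$ with $f_i'=f_i/u$. The class $\theta_x$ and the decomposition \eqref{HG1} of $\theta_x\cdot\omega_i$ do not involve $\nu$, so $\gamma_i$ and $h_i$ are unchanged. The forms $\eta_i$ are therefore also unchanged. One point needs care: $g_i$ is built from the local expression $b_xf_i/(z-z(x))\cdot\nu$, and this product is independent of the frame. Note also that $\eta_i$ is determined by its cohomological characterization in \eqref{etai}, together with the principal part at $x$, which is fixed by the identity $\gamma_i+\overline\partial_\alpha h_i=\theta_x\cdot\omega_i$. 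By \eqref{eta_isP},
\[
s_{z,\nu',x}=\frac{\eta_i}{f_i'(z(x))}=u(x)\,s_{z,\nu,x}.
\]
Since $\frac{\partial}{\partial z}\otimes\nu'=u(x)\,\frac{\partial}{\partial z}\otimes\nu$ at $x$, this is exactly the required compatibility.

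\emph{Coordinate change.} Replace $z$ by $w$ with $z=\phi(w)$, and set $c=\phi'(w(x))$, so that $\frac{\partial}{\partial w}=c\,\frac{\partial}{\partial z}$ at $x$. By the stated independence of \eqref{Schiffer} from the coordinate, $[\theta^w_x]=c^2[\theta^z_x]$. The decomposition \eqref{HG1} is linear in $\widetilde\xi$, and the choice of representative is irrelevant, since modifying $\widetilde\xi$ by $\overline\partial v$ only adds $\overline\partial_\alpha(v\cdot\omega_i)$ to $\overline\partial_\alpha h_i$. Hence $\eta_i^w=c^2\eta_i^z$ up to the ambiguity in $h_i$. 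That ambiguity is a holomorphic section of $\alpha$, and since $\alpha$ is nontrivial of degree zero, $H^0(C,\alpha)=0$, so the ambiguity vanishes. Meanwhile $f_i^w(w(x))=c\,f_i^z(z(x))$. Therefore
\[
s_{w,\nu,x}=\frac{c^2\eta_i^z}{c\,f_i^z(z(x))}=c\,s_{z,\nu,x},
\]
which is again the required transformation.

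Combining the two cases, the assignment $\lambda\,\frac{\partial}{\partial z}\otimes\nu\mapsto\lambda\,s_{z,\nu,x}$ is well defined and linear. It therefore defines an element of $H^0(C,K_C\otimes\alpha(2x))\otimes(T_xC\otimes\alpha_x)^*=H^0(C,K_C\otimes\alpha(2x))\otimes(K_C\otimes\alpha^*)_x$.

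The main obstacle is justifying $\eta_i^w=c^2\eta_i^z$ rigorously. The subtlety is that $g_i$ is defined using the explicit cutoff expression $b_xf_i/(z-z(x))$, which depends on $z$ and $b_x$. I would handle this by characterizing $\eta_i$ intrinsically. On $C\setminus\{x\}$ it equals $D'_H$ of a section $g_i$ satisfying $\overline\partial_\alpha g_i=\gamma_i$, the harmonic representative of $[\widetilde\xi\cdot\omega_i]$. Any two such $g_i$ differ by a holomorphic section on $C\setminus\{x\}$ with a prescribed pole at $x$. When the bump data change, $\gamma_i$ is unchanged as long as the cohomology class $[\widetilde\xi\cdot\omega_i]$ is the same, which it is. The difference of the two $g_i$ then extends to a global holomorphic section of $\alpha$, which is zero because $H^0(C,\alpha)=0$. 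This intrinsic characterization gives the scaling by $c^2$ directly.
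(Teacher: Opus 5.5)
Your proposal is correct. The frame change is handled exactly as in the paper: there $\nu'=e^u\nu$ gives $s_{z,\nu',x}=e^{u(0)}s_{z,\nu,x}$.

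For the coordinate change you take a different route (your $c$ is the paper's $1/a$). The paper does not quote the coordinate independence of $\xi_x$. Using the same bump function, it derives $\widetilde\xi'=a^{-2}\widetilde\xi+\overline{\partial}(b_xw)\frac{\partial}{\partial z}$ from the expansion \eqref{aw} and exhibits the explicit choice $h_i'=a^{-2}h_i+b_xwf_i\nu$. It then checks that the change in the cutoff term and the change in $h_i$ cancel, so $g_i'=a^{-2}g_i$ identically; the coordinate independence of $\xi_x$ falls out as a byproduct. You instead use \eqref{Schiffer}, the fact that $\gamma_i$ depends only on the cohomology class, and a uniqueness argument for $g_i$ resting on $H^0(C,\alpha)=0$. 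This is more conceptual.

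However, your claim that $g_i^w-c^2g_i^z$ extends to a global holomorphic section of $\alpha$ needs the two polar parts at $x$ to agree. Concretely, $c^2\frac{f_i^z(z)}{z-z(x)}-\frac{f_i^w(w)}{w-w(x)}$ must be holomorphic at $x$. This follows from $z-z(x)=c\,(w-w(x))+O((w-w(x))^2)$ and $f_i^w(w(x))=c\,f_i^z(z(x))$, and it is exactly the leading-order content of \eqref{aw}. You should check it explicitly rather than absorb it into ``prescribed pole''. Alternatively, you can skip the check: the difference is a priori a section in $H^0(C,\alpha(x))$, and this space vanishes by Riemann--Roch precisely because $x$ is not a base point of $K_C\otimes\alpha$.

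Your middle paragraph's ``up to the ambiguity in $h_i$'' is inaccurate as written. Changing $\widetilde\xi$ by $\overline{\partial}v$ shifts $h_i$ by the non-holomorphic $v\cdot\omega_i$, and the cutoff term in $g_i$ changes as well. Your final paragraph supersedes that reasoning, so this does not affect correctness.
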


\begin{proof}
The argument is simple, but the computation is somewhat cumbersome.

Let $z'$ (respectively, $\nu'$) be another choice of holomorphic coordinate
function (respectively, holomorphic frame).
Without loss of generality it may be assumed that $z(x) \,=\, z'(x)\, =\, 0$; this assumption is imposed.
Let
\begin{gather}\label{zpu}
z'\ =\ \varphi(z) , \qquad \nu'\ =\ e^u \nu
\end{gather}
for holomorphic functions $\varphi$ and $u$ with $\varphi(0) \,=\, 0$
and $ \varphi' \,\neq\, 0$. In order to compare $\s{z,\nu,x}$ and
$\s{z', \nu', x}$, it is needed to trace through their entire construction
starting from \eqref{f2}. Choose a bump
function $b_x$ as before and set
\begin{gather*}
\widetilde\xi\ :=\ \frac{\overline{\partial} b_x}{z} \otimes
\frac{\partial}{\partial z} , \qquad \widetilde\xi'\ :=\
\frac{\overline{\partial} b_x}{z'} \otimes
\frac{\partial}{\partial z'}.
\end{gather*}
We start by comparing $\widetilde\xi$ and $\widetilde{\xi}'$. Since
$\varphi ( 0)\, =\, 0 $, setting $a\, :=\, \varphi'(0) \,\neq\, 0$, we have
\begin{gather*}
z'\ =\ \varphi (z)\ =\ z (a + \psi) 
\end{gather*}
for a holomorphic function $ \psi$ with $\psi(0) \,=\, 0$.
Furthermore,
\begin{gather*}
\frac{\partial}{\partial z' }\ =\ \frac{1}{\varphi'(z)}
\frac{\partial}{\partial z } ,\quad \text{ so } \quad \widetilde\xi'\ =\
\frac{{\overline{\partial} b_x}}{z(a+\psi) \varphi' } \otimes \frac{\partial}{\partial z}.
\end{gather*}
As $a\,=\, \varphi'(0)$ and $\psi(0)\,=\, 0$, we have
\begin{gather}\label{aw}
\frac{1}{(a+\psi) \varphi' }\ =\ \frac{1}{a^2 } + zw
\end{gather}
for a holomorphic function $w$, and hence
$$
\widetilde{\xi} '\,\ =\,\ \frac{1}{a^2} \widetilde \xi + \debar (b_x w) \frac{\partial}{\partial z}.
$$
We proceed with the description of Hodge-Gaussian maps:
\begin{gather*}
\widetilde \xi ' \cdot \omega_i\ =\ \gamma_i' + \debar h'_i , \qquad
\widetilde \xi \cdot \omega_i\ =\ \gamma_i + \debar h_i .
\end{gather*}
So we can choose $h_i'\, =\, \frac{h_i}{a^2} + b_xwf_i$, and hence
$g_i '\, =\, \frac{b_x f'_i }{z'} \nu' - h_i '$.
Now \eqref{fi} and \eqref{zpu} imply that
\begin{equation}\label{fip}
f_i'\, =\, \omega_i \biggl ( \frac{\partial }{\partial z'} \otimes
\nu'^* \biggr ) \,=\, \frac{1}{\varphi' e^u } f_i, \ \ \,
g_i ' \,=\, \frac{b_x f_i} { z (a + \psi) \phi'} - \frac{1}{a^2} h_i - b_xwf_i. \nu .
\end{equation}
Eq. \eqref{aw} implies that $g_i' \,=\, \frac{1}{a^2} g_i$, $\eta_i' \,=\, \frac{1}{a^2} \eta_i$.
Eq. \eqref{fip} implies that
\begin{equation}\label{ssp}
\s{z',\nu',x}\,\ =\,\ \frac{e^{u(0)}}{a} \s{z,\nu,x}.
\end{equation}
Assume now that
\begin{gather*}
\lambda \frac{\partial}{\partial z}(x) \otimes \nu(x)\ =\ \lambda'
\frac{\partial}{\partial z'}(x) \otimes \nu'(x) .
\end{gather*}
Then using \eqref{ssp} it follows immediately that
\begin{gather*}
\lambda\ =\ \lambda ' \frac{e^{u(0)}} {a} , \quad \text { so }
\lambda \s{z,\nu,x}\ =\ \lambda ' \s{z',\nu',x}.
\end{gather*}
Thus, $\eta_{P,x}$ is well-defined.
\end{proof}

Using the Poincar\'e adjunction formula
${\mathcal O}_C(x)_{|x}\,=\, (T_C)_x$ it follows that
$(K_C \otimes \alpha(2x))_x\otimes (K_C\otimes\alpha^*)_x\,=\,\mathbb
C$. We note that $\eta_{P,x}$ takes the value $1$ at the point $x$.

\section{Construction of a twisted meromorphic bidifferential}\label{se3}

Take $x\, \in\, C$ and
consider a section $\beta \,\in\, H^0(C, \,K_C \otimes \alpha(2x) )$. Then
$\beta \in \cA^{1,0}(C\setminus \{x\}, \alpha)$ and
$\overline{\partial} \beta\, =\,0$ on $C \setminus \{x\}$. Moreover
$D'_H \beta \,=\,0$ because it is a $(2,\,0)$-form on a curve. Thus
$D_H \beta \,=\, 0$, and hence the following homomorphism is obtained:
\begin{equation}\label{j_x}
(j_P)_x\,:\, H^0(C, \,K_C \otimes \alpha(2x) )\, \longrightarrow\, H^1_{D_H}(C \setminus \{x\}, \,
{\mathbb C}_{\alpha}),\ \ \beta\,\longmapsto\, [\beta].
\end{equation} 
\begin{lemma}
The map $(j_P)_x$
is injective unless $C$ is
hyperelliptic and $x$ is a Weierstrass point such that
$\alpha\, =\, {\mathcal O}_C(x-y)$ for some other Weierstrass point
$y\, \in\, C$. In particular, $(j_P)_x$ is injective for all $x$ if
$C$ is not hyperelliptic.
\end{lemma}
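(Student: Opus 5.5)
Suppose $\beta \in H^0(C,\, K_C\otimes\alpha(2x))$ satisfies $(j_P)_x(\beta)=0$, so that $\beta = D_H f$ for a smooth section $f$ of $\alpha$ over $C\setminus\{x\}$. The plan is to show that $\beta$ then has no residue and must be a global holomorphic derivative, which forces $h^0(\alpha(2x))>0$, and to analyse when this happens.

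\textbf{Step 1: $f$ is holomorphic.} Since $\beta$ has type $(1,0)$, comparing types in $\beta = D_H f$ gives $\overline{\partial}_\alpha f=0$ and $D'_H f=\beta$. So $f$ is a holomorphic section of $\alpha$ on $C\setminus\{x\}$.

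\textbf{Step 2: $f$ has at most a simple pole at $x$.} Near $x$, choose a flat frame $\nu$; this is possible because $\bC_\alpha$ is trivial on a disc $U$. Then $D'_H(l\nu)=\partial l\otimes\nu$, so $\beta=\partial l$ on $U\setminus\{x\}$. It follows that $\beta$ has zero residue at $x$, and hence $l$ is single-valued with at most a simple pole at $x$. Thus $f\in H^0(C,\,\alpha(x))$ and $\beta = D'_H f$.

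\textbf{Step 3: the dichotomy.} If $f=0$ then $\beta=0$, and injectivity holds. Otherwise $h^0(\alpha(x))\ge 1$. Since $\deg\alpha(x)=1$, this means $\alpha(x)\cong\mathcal O_C(y)$ for some point $y$, i.e. $\alpha=\mathcal O_C(y-x)$. Because $\alpha$ is nontrivial, $y\neq x$; in particular $f$ genuinely has a simple pole at $x$, so $\beta$ has a genuine double pole there.

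Next use $\alpha^{2}\cong\mathcal O_C$, which gives $2y\sim 2x$ with $x\ne y$. This yields a degree-two map $C\to\bP^1$, so $C$ is hyperelliptic (for $g\ge2$) with $x,y$ in a fibre of the $g^1_2$ that is totally ramified at both points. Hence $x$ and $y$ are Weierstrass points, and $\alpha=\mathcal O_C(x-y)$, since $\alpha\cong\alpha^{-1}$.

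Conversely, in this exceptional case the function $f$ with divisor $y-x$, regarded as a section of $\alpha(x)$, gives a nonzero $\beta=D'_Hf$. This $\beta$ lies in $H^0(K_C\otimes\alpha(2x))$ and is $D_H$-exact, so $(j_P)_x$ is not injective there. When $C$ is not hyperelliptic the exceptional case cannot occur, which gives the final claim.

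\textbf{Main obstacle.} The most delicate point is Step 2: extending $f$ across $x$ and controlling its pole order. This requires justifying that $D_H$-exactness on $C\setminus\{x\}$ kills the residue. That follows either from the local computation in a flat frame or from the Mayer--Vietoris description in Lemma \ref{hyperplane}, where the quotient line $\bC_{\alpha,x}$ records exactly the residue.
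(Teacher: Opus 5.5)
Your proof is correct and is essentially the paper's argument: the paper works with an arbitrary holomorphic frame and the integrating factor $e^{u}$ with $u'=\frac{1}{h}\frac{\partial h}{\partial z}$, which is just a change to your flat frame $e^{-u}\nu$. It then concludes $f\in H^0(C,\,\alpha(x))$, $\alpha\cong\mathcal O_C(y-x)$ and $2y\sim 2x$ exactly as you do. Two minor points: $l$ is single-valued from the start, since $f$ is a section over $C\setminus\{x\}$, so the vanishing of the residue is a consequence rather than something that needs justifying. Also, your converse (which goes beyond what the paper proves) should use the meromorphic section of $\alpha$ with divisor $y-x$, since there is no meromorphic function with that divisor.
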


\begin{proof}
Take a non-zero $\beta \,\in\, H^0(C,\, K_C \otimes \alpha(2x) )$
and assume that $(j_P)_x (\beta)\, =\, [\beta] \,= \,0$. This implies that there
is a ${\mathcal C}^{\infty}$ section $f$ of $\alpha$ over
$C\setminus\{x\}$ for which $\beta \,=\, D_H (f)$. On
$C\setminus \{x\}$, the section $\beta$ is a holomorphic
$(1,\,0)$-form with values in $\alpha$, so
$\overline{\partial}_\alpha f \,=\,0$ and $\beta\, =\, D'_H(f)$.

In principle $f$ could have an essential singularity at $x$. It will be proved
that $f$ has at most a simple pole.

Fix a holomorphic coordinate function $z$ on a simply connected open neighbourhood
$U\, \subset\, C$ of $x$ with $z(x)\,=\,0$. Fix a holomorphic frame $\nu$ of
the restriction of $\alpha$ to $U$. Write $f\, =\, l\cdot \nu$ and
$\beta \,=\, (z^{-2} g) dz \otimes \nu$ with $g$ holomorphic on $U$ and $l$ holomorphic
to $U\setminus\{x\}$. Using \eqref{DpH} the condition $\beta \,=\, D'_H(f)$ translates to
\begin{gather*}
l' + \lambda l\ =\ \frac{g}{z^2}, \qquad \text{ where }\ \lambda\ :=\ \frac{1}{h} \, \frac{\partial
h}{ \partial z}.
\end{gather*}
Recall that $\lambda$ is holomorphic on $U$ (see \eqref{local-system}).
Take $u\,\in\, \OO(U)$ such that $u' \,=\, \lambda$. Set
$ v(z) \,: =\, l(z) e^{u(z)}$. Then
\begin{gather*}
v'\ =\ e^u( l' + \lambda l)\ =\ z^{-2} e^u g.
\end{gather*}
So $v'$ has at most a double pole at $z\,=\,0$ and $v$ has at most a
simple pole at $0$. The same holds for $l\, =\, v e^{-u}$, so $f$ is
meromorphic with at most a simple pole at $x$; in other words,
$f\, \in\, H^0(C, \,\alpha(x))$. Since $\beta \,\neq\, 0$, we also have
$f \,\neq\, 0$. Hence there exists a point $y \,\in\, C\setminus \{x\}$ such that
$\alpha \,\cong\, {\mathcal O}_C( y-x)$. This implies that
${\mathcal O}_C(2y-2x) \,=\, {\mathcal O}_C$. Consequently, $C$ is hyperelliptic with $x$ and
$y$ being Weierstrass points of $C$.
\end{proof}

Consider the complex surface $S\,:=\, C\times C$, and denote by $\Delta$ the reduced
diagonal divisor $\{(y,\, y)\,\, \big\vert\,\, y\, \in \, C\}$ on
$S$. Let $p,\, q\,:\, S \,\longrightarrow\, C$ be the projections with
$p(x,\,y)\, =\,x$ and $q(x,\,y) \,=\, y$.

As before, fix a nontrivial holomorphic line bundle $\alpha$ on $C$ together
with a holomorphic isomorphism of $\alpha^{\otimes 2}$ with ${\mathcal O}_C$; this
isomorphism produces a holomorphic isomorphism between $\alpha$ and $\alpha^*$.

We have $ K_S\,\cong\,(p^*K_C) \otimes (q^*K_C)$.
Consider the line bundles on $S$
\begin{gather}
\notag \alpha_S\,:=\, (p^*\alpha) \otimes (q^*\alpha),\\
\label{del}
L\ :=\ K_S\otimes \alpha_S(2\Delta)\ =\ K_S\otimes \alpha_S
\otimes {\mathcal O}_S(2\Delta).
\end{gather}
Set
\begin{equation}\label{VP}
V_P\ :=\ p_*(q^*(K_C \otimes \alpha) ( 2\Delta)),\ \ \, E\ :=\ p_* L.
\end{equation}
The projection formula gives that $E \,=\, K_C \otimes \alpha \otimes V_P$. For $x\, \in\, C$, we have
\begin{gather*}
(q^*(K_C \otimes \alpha))(2 \Delta)\big\vert_{\{x\}\times C }\,=\, K_C\otimes \alpha (2x)\,:=\,
K_C\otimes \alpha \otimes {\mathcal O}_C(2x), \\
H^0(p^{-1}(x),\, q^*(K_C \otimes \alpha) (2\Delta)\big\vert_{p^{-1}(x)}) \,\cong\,
H^0 (C,\,K_C \otimes \alpha (2x)).
\end{gather*}
By Grauert's
semicontinuity theorem, $V_P$ is a holomorphic vector bundle over $C$
with fiber
$$(V_P)_x \,\cong\, H^0(C ,\, K_C\otimes \alpha (2x))$$ over $x\, \in\, C$.
For every $x \,\in\, C$, the map $\eta_{P,x}$ defined in \eqref{a2}
is an element of
$K_{C,x}\otimes \alpha _x \otimes H^0(C ,\, K_C\otimes \alpha
(2x))$. So this defines a ${\mathcal C}^{\infty}$ section $\eta_P$ of
the vector bundle
$E \,=\, (K_C \otimes \alpha)\otimes p_* (q^*(K_C \otimes
\alpha)(2\Delta))$. Denote by
\begin{equation}\label{a3}
(i_P)_C\ :\ H^1(C,\, \alpha)
\ \cong\ H^{0,1}_{\overline{\partial}_\alpha}(C,\,\alpha) \ \hookrightarrow\ H^1(C,\, {\mathbb C}_{\alpha})
\end{equation}
the inclusion map given by the Hodge decomposition, and by
\begin{equation}\label{a4}
(\psi_P)_x\ :\ T_x C\otimes \alpha_x\ \longrightarrow\ H^1(C,\,\alpha) \cong H^0(C,\, K_C \otimes \alpha)^*
\end{equation}
the dual of 
the evaluation map $ev_x\,:\, H^0(C,\, K_C \otimes \alpha)\, \longrightarrow\,
(K_C \otimes \alpha)_x$.
We have the following inclusions
\begin{equation*}
H^0(C,\, K_C \otimes \alpha) \,\subset\, H^0(C,\, (K_C \otimes \alpha)(x))\,\subset\, H^0((C,\,
K_C \otimes \alpha)(2x) )
\, \stackrel{(j_P)_x}{\hookrightarrow}\, H^1_{D_H}(C\setminus\{x\},
\, {\mathbb C}_{\alpha}) .
\end{equation*}
By Riemann-Roch $h^0(K_C \otimes \alpha) \,=\, g-1$,
$h^0((K_C \otimes \alpha)(x) )\,=\, g$ and
$h^0((K_C \otimes \alpha)(2x)) \,= \,g+1$. It follows from
\eqref{etai}, and \eqref{eta_isP} that
$(j_P)_x(\suz) \,=\, [\suz] \, \in\, H^{0,1}_\debar(C,\, \alpha)\, \subset\, H^1(C,\, {\mathbb
C}_{\alpha})$ (the inclusion follows from \eqref{hodgedec}). Thus,
we have
\begin{equation}\label{classe} 
[\suz]\ \in\ (j_P)_x^{-1}(H^1(C,\, \alpha)).
\end{equation}
Set
\begin{gather}
\notag
{\mathcal H}_x:\ =\ H^0(C,\, K_C \otimes \alpha) \oplus \langle
\suz \rangle \ \subset\ H^0(C,\, K_C \otimes \alpha)(2x)),\\
\label{a1}
(\chi_P)_x\ :=\ {(j_P)_x}\big\vert_{{\mathcal H}_x}\ :\ {\mathcal H}_x\ \longrightarrow\
H^1(C,\, {\mathbb C}_{\alpha}).
\end{gather}

\begin{proposition}\label{diagx}
The following diagram is commutative:
\begin{equation}\label{cd}
\begin{tikzcd}[column sep=large] 
{} & H^1(C,\,\alpha) \arrow[hook]{rd}{(i_P)_C} & \\
T_x C\otimes \alpha_x \arrow{ru}{(\psi_P)_x} \arrow{rd}[swap]{-({\eta_P})_x} & 
& H^1(C,\, {\mathbb C}_{\alpha}) \\
& {\mathcal H}_x \arrow{ru}{(\chi_P)_x} &
\end{tikzcd}
\end{equation}
(the above maps are defined in \eqref{a1}, \eqref{a2}, \eqref{a3} and \eqref{a4}). 
\end{proposition}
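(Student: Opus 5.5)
By linearity, and since $T_xC\otimes\alpha_x$ is one-dimensional, it suffices to check commutativity on the generator $\frac{\partial}{\partial z}\otimes\nu$. Here $z$ is a local coordinate with $z(x)=0$ and $\nu$ is a local frame of $\alpha$. By \eqref{a2} the lower path sends this element to $-(j_P)_x(\suz)=-[\suz]$. The upper path sends it to the class in $H^{0,1}_{\debar}(C,\alpha)\subset H^1(C,\bC_\alpha)$ of the functional $\omega\mapsto ev_x(\omega)(\frac{\partial}{\partial z}\otimes\nu)$. For $\omega=\omega_i$ this value is $f_i(0)$. So I must show that $-[\suz]$ and $(i_P)_C(\psi_P)_x(\frac{\partial}{\partial z}\otimes\nu)$ agree in $H^1(C,\bC_\alpha)$.

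Both classes lie in $H^{0,1}_{\debar}(C,\alpha)$: the first by \eqref{classe}, the second by construction. By \eqref{etai} and \eqref{eta_isP} we have, for each $i$,
$$f_i(0)[\suz]=[\eta_i]=-[\gamma_i],$$
where $\gamma_i$ is the harmonic part of $\theta_x\cdot\omega_i$ from \eqref{HG1}. On the other hand $[\gamma_i]=[\theta_x\cdot\omega_i]\in H^1(C,\alpha)$. Under Serre duality $H^1(C,\alpha)\cong H^0(C,K_C\otimes\alpha)^*$, the class $[\theta_x\cdot\omega_i]$ pairs with $\omega_j$ to $\int_C\theta_x\cdot\omega_i\wedge\omega_j$, using $\alpha^{\otimes2}\cong\cO_C$.

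The key step is the residue computation
$$\int_C \frac{\debar b_x}{z}\, f_i f_j\, dz=c\, f_i(0)f_j(0)$$
for a universal nonzero constant $c$ (a multiple of $2\pi\sqrt{-1}$). This follows from Stokes' theorem: write $\debar b_x\wedge dz/z=-d(b_x\,dz/z)$ away from $0$ and shrink a small circle around $x$. Thus the class of $\gamma_i$ corresponds, as a functional, to $\omega_j\mapsto c f_i(0)f_j(0)$. So for any $i$ with $f_i(0)\ne0$, which exists when $x$ is not a base point, we get
$$[\suz]=-\frac{[\gamma_i]}{f_i(0)},$$
and this class is the functional $\omega\mapsto -c\,ev_x(\omega)(\partial_z\otimes\nu)$. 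In other words $-[\suz]=c\,(i_P)_C(\psi_P)_x(\partial_z\otimes\nu)$.

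The main obstacle is the normalization. The constant $c$ must be absorbed into the conventions of the Serre duality identification in \eqref{a4}, equivalently the pairing $\int_C$ with the factor $1/2\pi\sqrt{-1}$ and the orientation and sign. With the standard normalization, in which Schiffer variations dual to evaluation give $\xi_x\cdot\omega=ev_x(\omega)$, one has $c=1$. I would fix this convention explicitly, check that the sign in front of $\eta_P$ comes out as displayed in \eqref{cd}, and note that injectivity of $(i_P)_C$ and of $(j_P)_x$ on $\cH_x$ (Lemma \ref{hyperplane} and the preceding lemma) makes the identification of classes legitimate.
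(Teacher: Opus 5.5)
Your proof is correct and follows the paper's argument. Both rest on $f_i(0)[\suz]=[\eta_i]=-[\gamma_i]$ from \eqref{etai} and \eqref{eta_isP}, together with a residue computation of the pairing against $\omega_j$. The only cosmetic difference is that the paper first rewrites $[\suz]=-[\gamma_x]$, where $\gamma_x=\debar\bigl(\frac{b_x}{z}\bigr)\otimes\nu$ represents the coboundary of $0\to\alpha\to\alpha(x)\to\alpha(x)|_x\to 0$, so the normalization constant $c$ you flag never enters once $(\psi_P)_x$ is identified with this coboundary. (Minor: away from $x$ one has $\debar b_x\wedge dz/z=+d(b_x\,dz/z)$, but this is harmless since you absorb signs into $c$.)
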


\begin{proof}
Consider the short exact sequence
$$ 0\, \longrightarrow\, \alpha\, \longrightarrow\, \alpha(x)\,=\,
\alpha\otimes{\mathcal O}_C(x) \,\longrightarrow\, \alpha(x)\big\vert_{x}\,=\, \alpha_x\otimes
T_xC\, \longrightarrow\, 0$$
and the corresponding long cohomology exact sequence
\begin{equation}\label{edelta}
0\,\longrightarrow\, H^0(\alpha(x)\big\vert_{x})\,=\, \alpha_x\otimes T_xC
\, \stackrel{\delta}{\longrightarrow}\, H^1(C,\, \alpha).
\end{equation}
Here $H^0(C,\, \alpha(x))\,=\,0$ because ${\rm genus}(C)\,\geq\, 1$; the above isomorphism
$\alpha(x)\big\vert_{x}\,=\, \alpha_x\otimes
T_xC$ is given by the Poincar\'e adjunction formula.
The image of $\delta$ in \eqref{edelta} is generated by
the class of
\begin{gather*}
\gamma_x\ :=\ \overline{\partial}(\frac{b_x }{z- z(x)}) \otimes \nu\ \in\ \mathcal A^{0,1}(\alpha),
\end{gather*}
 where $b_x$ is a compactly
supported function defined on a small open neighborhood $U$ of $x$ with $b_x \,\equiv\, 1$ on a smaller
neighborhood of $x$, $z$ is a holomorphic
coordinate function on $U$ and $\nu$ is a holomorphic frame of $\alpha$ over $U$.

Fix a basis $\{\omega_1,\,\cdots ,\, \omega_{g-1}\}$ of $H^0(C,\, K_C \otimes \alpha)$.
Choose $i$ such that $\omega_i(x) \,\neq\, 0$. Then by definition \eqref{eta:1}
\begin{gather*}
\suz\ =\ \frac{\eta_i }{ f_i(z(x))}.
\end{gather*}
By \eqref{etai} and \eqref{f2}
$$
[\eta_i] \,=\,-[\gamma_i] \,=\, -[\gamma_i + \debar_\alpha h_i ]
 \, =\, - \biggl [ \debar_\alpha \biggl ( \frac{b_x f_i}{z-z(x)} \otimes \nu \biggr ) \biggr].
$$
This implies that
$$
(j_P)_x ( \suz) \,=\, [\suz] \,=\, - \frac{1}{f_i(z(x))}
\biggl [ \debar_\alpha \biggl ( \frac{b_x f_i}{z-z(x)} \otimes \nu \biggr )
\biggr ]
$$
$$
 =\, - \biggl [ \debar \biggl ( \frac{b_x}{z-z(x)}\biggr )\otimes \nu 
\biggr ]\, =\, - [\gamma_x].
$$
Therefore, if $\omega_i \,=\, f_i(z) dz \otimes \nu$ around $x$, we have 
\begin{equation*}
\int_C (j_P)_x(\suz) \wedge \omega_j\ =\ \int_C \overline{\partial} \biggl (\frac{b_x }{z- z(x)} \biggr)
\otimes\nu\wedge \omega_j\ =\ f_j(z(x)).
\end{equation*}
It follows that
$$-(j_P)_x \circ ((\eta_P)_x (\frac{\partial}{\partial z}(x) \otimes \nu_x))\ =\
(i_P)_C \circ ((\psi_P)_x(\frac{\partial}{\partial z}(x) \otimes \nu_x))\ \in\ H^1(C,\, \alpha).$$
This completes the proof. 
\end{proof}

The diagram \eqref{diagx} is a Prym version of a similar diagram for
Jacobians of curves \cite{bcfp}. In other words, there are two similar diagrams,
one for $\widetilde{C}$ and one for $C$. We will now recall them and
relate then with \eqref{diagx}.

Consider the \'etale double cover
\begin{equation}\label{dc}
\pi\ :\ \widetilde{C}\ \longrightarrow\ C
\end{equation}
given by the nontrivial line bundle $\alpha$ and the trivialization of
$\alpha^{\otimes 2}$. Denote the complex surface
$\widetilde{S}\,=\, \widetilde{C}\times \widetilde{C}$; the diagonal
divisor $\{(z,\, z)\,\,\big\vert\,\,\, z\,\in\, \widetilde{C}\}$ in
$\widetilde{C}\times\widetilde{C}$ is denoted by
$\widetilde{\Delta}$. For any $y\,\in\, \widetilde{C}$ the following
commutative diagram was established in \cite[Proposition 3.5]{bcfp}:
\begin{equation}\label{cdtilde}
\begin{tikzcd}[column sep=large] 
{} & H^1(\widetilde{C},\,{\mathcal O}_{\widetilde{C}}) \arrow[hook]{rd}{i_{\widetilde{C}} }& \\
(T_{\widetilde C})_y\arrow{ru}{\widetilde{\psi}_y} \arrow{rd}[swap]{-\widetilde{\eta}_y}
& & H^1(\widetilde{C},\, {\mathbb C}) \\
& H^0({\widetilde C},\, K_{\widetilde C}(2y)) \arrow[hook]{ru}{\widetilde{j}_y} &
\end{tikzcd}
\end{equation}
where the map $i_{\widetilde{C}}$ is given by the Hodge decomposition,
$\widetilde{\psi}_y$ is the dual of the evaluation map on 1-forms at
$y$ and ${\widetilde{j}_y}$ 
associates to a meromorphic form its de Rham cohomology class; in \eqref{cdtilde},
$\widetilde{\eta} \,\in\, H^0(\widetilde{S},\, K_{\widetilde S}(2
\widetilde{\Delta}_{\widetilde{S}}))$ is a meromorphic form
that gives the second fundamental form of the Torelli map for the curve
$\widetilde C$ (see \cite[Theorem 3.7]{cfg}).

Similarly, we have the analogous diagram --- for the curve $C$ and a
form $\eta \,\in\, H^0(S,\, K_{S}(2 {\Delta}_{S}))$ --- that gives the
second fundamental form of the Torelli map for $C$:
\begin{equation}\label{cdC}
\begin{tikzcd}[column sep=large] 
{} & H^1(C,\,{\mathcal O}_{C}) \arrow[hook]{rd}{i_{C} }& \\
(T_C)_x\arrow{ru}{\psi_x} \arrow{rd}[swap]{-\eta_x} & & H^1(C,\, {\mathbb C}) \\
& H^0(C,\, K_{C}(2x)) \arrow[hook]{ru}{j_x} &
\end{tikzcd}
\end{equation}

To investigate the relationship between the three diagrams in \eqref{cd}, \eqref{cdtilde} and \eqref{cdC},
first note that
\begin{equation}\label{dis}
\pi_*{\mathcal O}_{\widetilde{C}}\ =\ {\mathcal O}_C\oplus\alpha ,
\end{equation}
where $\pi$ is the map in\eqref{dc}. Since $\pi$ is \'etale, the projection formula gives
$$
\pi_* T_{\widetilde{C}}\,=\, \pi_* \pi^* T_C\,=\, T_C\otimes \pi_*{\mathcal O}_{\widetilde{C}}
\,=\, T_C\oplus (T_C\otimes \alpha).
$$
On the other hand, $\pi^*\alpha\, =\, {\mathcal O}_{\widetilde{C}}$ because $\widetilde{C}\, \subset\,
\alpha \setminus 0_C$, where $0_C$ is the image of the zero section of $\alpha$. So
$$
\pi^*(\pi_* T_{\widetilde{C}})\,=\, \pi^*T_C\oplus \pi^*(T_C\otimes \alpha) \,=\, T_{\widetilde{C}}\oplus
T_{\widetilde{C}}.
$$
Using this isomorphism we define
\begin{equation}\label{phi}
\phi\ :\ T_{\widetilde{C}}\ \longrightarrow\ \pi^*(\pi_* T_{\widetilde{C}})\ =\ T_{\widetilde{C}}\oplus
T_{\widetilde{C}}
\end{equation}
as the diagonal map $v\, \longmapsto\, (v,\, v)$. Let $w$ be
a local holomorphic coordinate function on a simply connected neighbourhood $U\, \subset\, \widetilde{C}$
of $y$ such that the restriction $\pi\big\vert_{U}\, :\, U\, \longrightarrow \, \pi(U)$ is a
biholomorphism. Then $z\, := \, w \circ (\pi\big\vert_{U})^{-1} $ is a
holomorphic coordinate function on the neighbourhood $\pi(U)$ of $x\, =\, \pi (y)$.
The map $\phi$ is: 
$$\left(\frac{\partial}{\partial w}\right)_y\ \longmapsto \
\left(\left(\frac{\partial}{\partial z}\right)_x,\, \left(\frac{\partial}{\partial z}\right)_x \otimes y\right).$$
As $\pi$ is finite, from \eqref{dis},
$$
H^1(\widetilde{C},\, {\mathcal O}_{\widetilde{C}})\,=\,
H^1(C,\, \pi_*{\mathcal O}_{\widetilde{C}})\,=\, H^1(C,\, {\mathcal O}_C)\oplus H^1(C,\, \alpha)
$$
Using this it is straightforward to check that
 \begin{equation}\label{inclusion}
{\widetilde{\psi}}_y\ =\ ( \psi_x,\, (\psi_P)_x) \circ \phi_y,
 \end{equation}
 where $\psi_P$ is the map in \eqref{a4} and $\psi_x$ (respectively, ${\widetilde{\psi}}_y$)
is the map in \eqref{cdC} (respectively, \eqref{cdtilde}).

 Let
 ${\mathbb C}_{C}$ (respectively, ${\mathbb C}_{\widetilde{C}}$) be the
constant sheaf on $C$ (respectively, $\widetilde{C}$) with stalk $\mathbb C$. Then
$\pi_*{\mathbb C}_{\widetilde{C}}\,=\, {\mathbb C}_{C}\oplus
{\mathbb C}_\alpha$. Using this and \eqref{dis} one checks that 
the map
\begin{equation}\label{i}
H^1(\widetilde{C},\,{\mathcal O}_{\widetilde{C}})\, \cong\, H^1(C,\, {\mathcal O}_C)
\oplus H^1(C,\, \alpha)\, \xrightarrow{\,\,(i_C,\, (i_P)_C)\,\,}\, 
H^1(C,\, {\mathbb C}) \oplus H^1(C,\, {\mathbb C}_{\alpha})\,\cong\,
H^1(\widetilde{C},\, {\mathbb C})
\end{equation}
coincides with $i_{\widetilde{C}}$ in \eqref{cdtilde}, where $(i_P)_C$, $i_C$ and
$i_{\widetilde{C}}$ are as in \eqref{cd}, \eqref{cdC} and \eqref{cdtilde} respectively. In other
words,
$$i_{\widetilde{C}}\,\ =\,\ (i_C,\, (i_P)_C).$$
{}From \eqref{inclusion} and \eqref{i} it follows that
\begin{equation}
\label{sopra}
i_{\widetilde{C}} \circ {\widetilde{\psi}}_y\ =\ (i_C,\, (i_P)_C) \circ (\psi_x,\, (\psi_P)_x) \circ \phi_y ,
\end{equation}
and from the diagrams \eqref{cd} and \eqref{cdC} we conclude that 
\begin{equation}
\label{sotto1}
(i_C,\, (i_P)_C) \circ (\psi_x, \,(\psi_P)_x)\circ \phi_y\ \, =\ \, (-j_x \circ \eta_x,
\, -(\chi_P)_x \circ (\eta_P)_x) \circ \phi_y.
\end{equation}
{}From diagram \eqref{cdtilde} it follows that
\begin{equation}
\label{sotto2}
i_{\widetilde{C}} \circ {\widetilde{\psi}_y}\ =\ -\,\widetilde{j}_y \circ\widetilde{\eta}_y.
\end{equation}

Let
\begin{equation}\label{si}
\sigma\ :\ \widetilde{C}\ \longrightarrow\ \widetilde{C}
\end{equation}
be the nontrivial deck transformation for the covering in \eqref{dc}. For
$y\, \in\, \widetilde C$, we have
$$K_{\widetilde C}(2y)\ \hookrightarrow\ K_{\widetilde C}(2y + 2\sigma(y))\ =\ \pi^*(K_C(2 x)),$$
where $x\,=\, \pi(y)$. Applying $\pi_*$, taking global sections and
using \eqref{dis} the following natural inclusion map is obtained:
\begin{equation}\label{ee}
\epsilon_y\, :\, H^0(\widetilde{C},\, K_{\widetilde C}(2y))\, \hookrightarrow\,
H^0(C,\, \pi_*\pi^* K_C(2x))\,=\, H^0(C,\, K_C(2x))\oplus H^0(C,\, (K_C\otimes \alpha)(2x)).
\end{equation}

\begin{lemma}
As the point $y$ moves over $\widetilde C$, the map $\epsilon_y$ varies holomorphically.
\end{lemma}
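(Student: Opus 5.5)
The plan is to realize $\epsilon_y$, for all $y$ at once, as a morphism of holomorphic vector bundles over $\widetilde{C}$ obtained by pushing forward sheaves on product surfaces. On $\widetilde{S}=\widetilde{C}\times\widetilde{C}$, with projections $\widetilde p,\widetilde q$, consider the line bundle $\widetilde q^{*}K_{\widetilde C}(2\widetilde\Delta)$. Let $\Gamma_\sigma=({\rm id}\times\sigma)^{-1}(\widetilde\Delta)$ be the graph of the deck transformation $\sigma$; it is disjoint from $\widetilde\Delta$ because $\sigma$ has no fixed points. The natural inclusion of sheaves
$$\widetilde q^{*}K_{\widetilde C}(2\widetilde\Delta)\ \hookrightarrow\ \widetilde q^{*}K_{\widetilde C}(2\widetilde\Delta+2\Gamma_\sigma)\ =\ ({\rm id}\times\pi)^{*}\bigl(q^{*}K_C(2\Delta)\bigr)$$
is holomorphic. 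The equality on the right holds because $({\rm id}\times\pi)^{*}\Delta'=\widetilde\Delta+\Gamma_\sigma$, where $\Delta'\subset\widetilde C\times C$ is the graph of $\pi$, and because $\pi^{*}K_C=K_{\widetilde C}$ since $\pi$ is étale. Restricted to the fibre $\widetilde p^{-1}(y)$, this inclusion is exactly $K_{\widetilde C}(2y)\hookrightarrow\pi^{*}K_C(2x)$.

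Next, push forward along $\widetilde p$. Write $\Pi={\rm id}\times\pi:\widetilde S\to\widetilde C\times C$, which is finite and étale. By the projection formula,
$$\Pi_{*}\Pi^{*}\bigl(q^{*}K_C(2\Delta')\bigr)\,=\,q^{*}K_C(2\Delta')\otimes q^{*}(\mathcal O_C\oplus\alpha),$$
using \eqref{dis} and the fact that $\Pi_*\mathcal O_{\widetilde S}=q^*\pi_*\mathcal O_{\widetilde C}$. Applying $p'_{*}$, where $p':\widetilde C\times C\to\widetilde C$ is the first projection, gives a morphism of coherent sheaves on $\widetilde C$:
$$\widetilde p_{*}\bigl(\widetilde q^{*}K_{\widetilde C}(2\widetilde\Delta)\bigr)\ \longrightarrow\ p'_{*}\bigl(q^{*}K_C(2\Delta')\bigr)\oplus p'_{*}\bigl(q^{*}(K_C\otimes\alpha)(2\Delta')\bigr).$$
The right-hand side is the pullback under $\pi$ of the bundles on $C$ whose fibres are $H^0(K_C(2x))$ and $H^0((K_C\otimes\alpha)(2x))$. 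Its second summand is $\pi^{*}V_P$ by base change along $\pi$.

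The main point is that all three direct images are locally free and commute with base change, so that the map of sheaves restricts fibrewise to $\epsilon_y$. Riemann--Roch gives constant fibre dimensions: $h^0(K_{\widetilde C}(2y))=\widetilde g+1$, $h^0(K_C(2x))=g+1$ and $h^0((K_C\otimes\alpha)(2x))=g+1$. The corresponding $h^1$ vanish, since $h^1(K(2y))=h^0(\mathcal O(-2y))=0$ and $h^0(\alpha(-2x))=0$. Grauert's theorem then gives local freeness and base change, just as was done above for $V_P$. The fibrewise identification with $\epsilon_y$ requires checking that the pushforward $\Pi_*$ and the splitting \eqref{dis} are compatible with restriction to the fibres $\widetilde p^{-1}(y)$ and $\{y\}\times C$. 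This compatibility is the step I expect to be the only delicate point. It holds because $\Pi$ is a product map, so $\Pi_*$ commutes with restriction to fibres over $\widetilde C$, and the decomposition $\pi_*\mathcal O_{\widetilde C}=\mathcal O_C\oplus\alpha$ is pulled back from $C$.

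A holomorphic bundle map between holomorphic vector bundles whose fibre at $y$ is $\epsilon_y$ is exactly what it means for $\epsilon_y$ to vary holomorphically, and this proves the lemma.
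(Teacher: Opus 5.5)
Your proposal is correct and takes essentially the same approach as the paper. The paper also obtains $\epsilon_y$ as the fibre of the bundle map $\widetilde V=\widetilde p_*(\widetilde q^*K_{\widetilde C}(2\widetilde\Delta))\hookrightarrow \widetilde W=\widetilde p_*(\widetilde q^*K_{\widetilde C}(2\widetilde\Delta+2\Gamma))$, with local freeness coming from Grauert; your extra identification of the target with $\pi^*V\oplus\pi^*V_P$, via the projection formula for $\mathrm{id}\times\pi$ and flat base change, is exactly the content of the paper's next result, Proposition \ref{epsilon}.
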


\begin{proof}
As before, denote $\widetilde{S} \,:=\, \widetilde{C} \times \widetilde{C}$;
let
\begin{equation}\label{tpq}
\widetilde{p},\ \widetilde{q}\ :\ \widetilde{S}\ \longrightarrow\ \widetilde{C}
\end{equation}
be the natural projections to the first and second factor respectively.
Let $$\Gamma\ =\ \{(x,\, \sigma(x))\,\in\, \widetilde{C} \times \widetilde{C}\,\, \big\vert\,\,\, x\, \in\,
\widetilde{C}\}\ \subset\ \widetilde{S}$$
be the graph of the map $\sigma$ in \eqref{si}. We have an inclusion map
\begin{equation}\label{e-epsilon}
\epsilon\,:\, \widetilde{V}\,:= \,\widetilde{p}_* ((\widetilde{q}^*K_{\widetilde{C}})
\otimes {\mathcal O}_{\widetilde{S}}(2 \widetilde{\Delta}) ) \,\hookrightarrow\,
\widetilde{W}\,:= \,\widetilde{p}_* (\widetilde{q}^*(K_{\widetilde{C}}) \otimes
{\mathcal O}_{\widetilde{S}}(2 \widetilde{\Delta} + 2 \Gamma)),
\end{equation}
where $\widetilde\Delta$, as before, is the reduced diagonal in $\widetilde{C}\times\widetilde{C}$. 
Again by Grauert's semicontinuity theorem, the direct image $\widetilde{W}$ in \eqref{e-epsilon} is a
holomorphic vector bundle over $\widetilde C$ of rank $2g+2$ with fiber
$$\widetilde W _y = H^0(\widetilde{C},\, K_{\widetilde{C}}(2y + 2\sigma(y)))
\ \cong\ H^0(C,\, K_C(2x)) \oplus H^0(C,\, (K_C \otimes \alpha)(2x)).$$
Furthermore, the map
$\epsilon_y$ in \eqref{ee} is induced by the inclusion map of vector bundles $\epsilon$
in \eqref{e-epsilon}. Consequently, $\epsilon_y$ varies holomorphically as $y$ moves over $\widetilde C$.
\end{proof}

\begin{proposition}\label{epsilon}
There is a natural isomorphism of vector bundles: 
\begin{equation}\label{iso}
\widetilde{W}\ \cong\ \pi^*V \oplus \pi^*V_P,
\end{equation}
where $V\,=\,p_*((q^*K_C) ( 2\Delta))$ and $V_P\, = \,p_*(q^*(K_C \otimes \alpha) ( 2\Delta))$
(see \eqref{VP}).
\end{proposition}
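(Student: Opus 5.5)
We deduce \eqref{iso} from a global version, over $\widetilde S$, of the fiberwise identification already used in \eqref{ee}. The plan is to push forward along $\mathrm{id}\times\pi$ and then apply the projection formula for the \'etale double cover.

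\textbf{Step 1: the divisor is a pullback.} Consider the \'etale double cover
$$\Pi\,:=\,\mathrm{id}\times\pi\ :\ \widetilde S\,=\,\widetilde C\times\widetilde C\ \longrightarrow\ \widetilde C\times C.$$
Let $\Delta'\,:=\,\{(y,\,\pi(y))\}\,\subset\,\widetilde C\times C$ be the graph of $\pi$. Then
$$\Pi^{-1}(\Delta')\,=\,\widetilde\Delta\cup\Gamma,$$
and since $\Pi$ is \'etale the pullback is reduced, so $\Pi^*\mathcal O(\Delta')\,=\,\mathcal O(\widetilde\Delta+\Gamma)$. Write $q'$ for the second projection of $\widetilde C\times C$. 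Using $K_{\widetilde C}\,=\,\pi^*K_C$ we get
$$\widetilde q^*K_{\widetilde C}(2\widetilde\Delta+2\Gamma)\ \cong\ \Pi^*\bigl(q'^*K_C(2\Delta')\bigr).$$

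\textbf{Step 2: push forward along $\Pi$.} Let $p'$ be the first projection of $\widetilde C\times C$, so that $\widetilde p\,=\,p'\circ\Pi$. Then
$$\widetilde W\,=\,p'_*\,\Pi_*\Pi^*\bigl(q'^*K_C(2\Delta')\bigr).$$
By the projection formula and $\Pi_*\mathcal O_{\widetilde S}\,=\,q'^*\pi_*\mathcal O_{\widetilde C}\,=\,\mathcal O\oplus q'^*\alpha$ (from \eqref{dis}), this becomes
$$\widetilde W\ \cong\ p'_*\bigl(q'^*K_C(2\Delta')\bigr)\ \oplus\ p'_*\bigl(q'^*(K_C\otimes\alpha)(2\Delta')\bigr).$$
The decomposition is canonical: it is the $\pm1$ eigenspace decomposition for the involution $\mathrm{id}\times\sigma$, which preserves $\widetilde\Delta+\Gamma$.

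\textbf{Step 3: base change.} We have $\widetilde C\times C\,=\,(\pi\times\mathrm{id})^*(C\times C)$, and under this identification $\Delta'$ is the pullback of $\Delta$ and $q'$ is the pullback of $q$. Since $\pi$ is flat, base change identifies
$$p'_*\bigl(q'^*F(2\Delta')\bigr)\ \cong\ \pi^*p_*\bigl(q^*F(2\Delta)\bigr)$$
for $F\,=\,K_C$ and for $F\,=\,K_C\otimes\alpha$. This gives $\pi^*V\oplus\pi^*V_P$, as required.

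\textbf{Main obstacle.} The delicate point is justifying base change, or equivalently local freeness and commutation with fibers. The fiber dimensions $h^0(K_C(2x))\,=\,g+1$ and $h^0((K_C\otimes\alpha)(2x))\,=\,g+1$ are constant by Riemann--Roch, since $h^1\,=\,0$. Grauert's theorem then gives local freeness and compatibility with base change. Fiberwise, the isomorphism so obtained is exactly the identification of $\widetilde W_y$ stated in the preceding lemma.
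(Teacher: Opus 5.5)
Your proposal is correct and is essentially the paper's argument. The paper also factors $\widetilde p$ through $f=\mathrm{Id}_{\widetilde C}\times\pi$, uses $f_*\mathcal O_{\widetilde S}=\mathcal O_{\widetilde C}\boxtimes(\mathcal O_C\oplus\alpha)$ with the projection formula to split $\widetilde W$, and then applies flat base change along the Cartesian square given by $\pi\times\mathrm{Id}_C$. Your graph $\Delta'$ is just an intermediate step in the paper's identity $\Phi^*\mathcal O_S(\Delta)=\mathcal O_{\widetilde S}(\widetilde\Delta+\Gamma)$ with $\Phi=(\pi,\pi)$.

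The Grauert discussion in your last paragraph is not needed. Since $\pi$ is \'etale, hence flat, flat base change (Hartshorne, Prop.~III.9.3) already gives $p'_*\bigl(q'^*F(2\Delta')\bigr)\cong\pi^*p_*\bigl(q^*F(2\Delta)\bigr)$ with no local-freeness hypothesis, and this is exactly how the paper concludes.
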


\begin{proof}
Set 
$$\Phi\, =\, (\pi,\, \pi)\,:\, \widetilde{S} \,=\, \widetilde{C}\times\widetilde{C}\,
\longrightarrow\, C \times C\, =\, S.$$
Then, clearly
$$\Phi^*({\mathcal O}_{S}({\Delta}))\ =\ {\mathcal O}_{\widetilde S}(\widetilde{\Delta}+ \Gamma) .$$
We have 
$$\pi \circ \widetilde{p}\ =\ p \circ \Phi,\ \ \ \pi \circ \widetilde{q}\ =\ q \circ \Phi.$$
Consider the diagram
\begin{equation}\label{ed}
\xymatrix@R=0.8cm@C=0.8cm{
\widetilde{C} \times \widetilde{C} \ar[rr]^{ f } \ar@/^3pc/[rrr] ^{\Phi} \ar@/_1pc/[drr]_{\widetilde{p}}
 & &
 \widetilde{C} \times C \cong X \ar[d]^{p_1} \ar[r]^{\qquad {\textbf g}\,} 
 & C \times C \ar[d]^{p } 
\\ & & \widetilde{C} \ar[r]^{\pi } &C& }
\end{equation}
where $f\,=\, {\rm Id}_{\widetilde{C}}\times\pi$,
${\textbf g}\,=\, \pi\times{\rm Id}_{C}$, and $p_1,\, p$ are
projections to the first factor; see \eqref{tpq} for
$\widetilde{p}$. So the square diagram on the right of \eqref{ed} is
Cartesian. Evidently, we have ${\textbf g} \circ f \,=\, \Phi$ and
$p_1 \circ f \,=\, \widetilde{p}$. There are the isomorphisms
$$\widetilde{W} \,=\, \widetilde{p}_* ((\widetilde{q}^*K_{\widetilde{C}}) \otimes
{\mathcal O}_{\widetilde{S}}(2 \widetilde{\Delta} + 2 \Gamma))
\,\cong\, \widetilde{p}_* (\widetilde{q}^*(\pi^*K_C) \otimes \Phi^*{\mathcal O}_{S}(2 \Delta))$$
$$= \ \widetilde{p}_* (\Phi^*((q^*K_C) \otimes {\mathcal O}_{S}(2 \Delta))),$$
because $\pi \circ \widetilde{q} \,=\, q \circ \Phi$. So we have 
$$\widetilde{W}\,=\, \widetilde{p}_* (\Phi^*((q^*K_C) \otimes {\mathcal O}_{S}(2 \Delta)))
\,=\, (p_1)_* \circ f_* (({\textbf g}\circ f)^* ( (q^*K_C) \otimes
{\mathcal O}_{S}(2 \Delta))),$$ because
$\widetilde{p}\, =\,p_1 \circ f$ and $\Phi\,=\, {\textbf g} \circ
f$. Denote
${\mathcal G}\, :=\, (q^*K_C) \otimes {\mathcal O}_{S}(2
\Delta)$. Hence it follows that
$$\widetilde{W}\ =\ (p_1)_* f_* (f^* {\textbf g}^* {\mathcal G})\ =\
 (p_1)_*(({\textbf g}^*{\mathcal G}) \otimes f_* {\mathcal O}_{\widetilde S}).$$

Note that $f_* {\mathcal O}_{\widetilde S}\, =\, ( Id_{\widetilde{C}} \times \pi)_*(
{\mathcal O}_{\widetilde C}\boxtimes{\mathcal O}_{\widetilde C})\,=\,
{\mathcal O}_{\widetilde C} \boxtimes \pi_* {\mathcal O}_{\widetilde C}
\,=\, {\mathcal O}_{\widetilde C} \boxtimes ({\mathcal O}_C \oplus \alpha),$ so
$$\widetilde{W}\ =\ (p_1)_*(({\textbf g}^*{\mathcal G}) \otimes f_* {\mathcal O}_{\widetilde S})
\ =\ (p_1)_*(({\textbf g}^*{\mathcal G})) \oplus (p_1)_*(({\textbf g}^*{\mathcal G}) \otimes
({\mathcal O}_{\widetilde{C}} \boxtimes \alpha))$$
$$=\ (p_1)_*(({\textbf g}^*{\mathcal G})) \oplus (p_1)_*(({\textbf g}^*{\mathcal G}) \otimes
{\textbf g}^*({\mathcal O}_{C} \boxtimes \alpha)), $$
where the last equality follows from the isomorphisms
$${\mathcal O}_{\widetilde{C}} \boxtimes \alpha\ =\ \pi^*{\mathcal O}_C \boxtimes{\rm Id}_C^*
(\alpha)\ =\ {\textbf g}^*({\mathcal O}_C \boxtimes \alpha).$$ 
Since the square on the right of \eqref{ed} is a Cartesian diagram, we
can apply flat base change \cite[Prop. 9.3 p. 255]{hartshorne}:
$$(p_1)_* \circ {\textbf g}^*\ =\ \pi^* \circ p_*,$$
so
\begin{gather*}
(p_1)_*({\textbf g}^*{\mathcal G})\ =\ \pi^* p_*{\mathcal G}\ =\ \pi^* V,\\
(p_1)_*(({\textbf g}^*{\mathcal G}) \otimes{\textbf g}^*({\mathcal O}_{C} \boxtimes \alpha))
\ =\ \pi^* p_*( q^*K_C \otimes {\mathcal O}_S(2\Delta) \otimes q^*\alpha)\ =\ \pi^* V_P, 
\end{gather*}
and therefore $\widetilde{W}\, \cong\, \pi^* V \oplus \pi^* V_P$.
\end{proof}

\begin{proposition}
\label{jtilde}
The homomorphism $$\widetilde{j}\ :\ \widetilde{p}_*(\widetilde{q}^*K_{\widetilde C}(2\widetilde{\Delta}))
\ \longrightarrow\ H^1(\widetilde{C},\, {\mathbb C}) \otimes_{\mathbb C} {\mathcal O}_{\widetilde{C}}$$
(see \eqref{cdtilde}) extends to a homomorphism
$$\widetilde{J}\ :\ {\mathcal N}\ \longrightarrow\ H^1(\widetilde{C},\,
{\mathbb C}) \otimes_{\mathbb C} {\mathcal O}_{\widetilde{C}},$$
where ${\mathcal N}$ is a sub-vector bundle of $\widetilde{W}$.
Under the isomorphism \eqref{iso} the vector bundle $\mathcal N$ splits as 
 $\pi^*V \oplus \widetilde{\mathcal H}$
where $\widetilde{\mathcal H}\,\subset\, \pi^*V_P$ is a sub-vector bundle of rank $g$.
Moreover, under these isomorphisms,
$$\widetilde{J}\ =\ (\pi^*(j),\, \widetilde{\chi}_P)\ :\ {\mathcal N}\ \cong\
\pi^*V \oplus \widetilde{\mathcal H}\ \longrightarrow\ 
H^1(\widetilde{C},\, {\mathbb C}) \otimes_{\mathbb C} {\mathcal O}_{\widetilde{C}}
$$
$$
\cong\ (H^1(C,\, {\mathbb C}) \otimes_{\mathbb C} {\mathcal O}_{C}) \oplus
(H^1(C,\, {\mathbb C}_{\alpha}) \otimes_{\mathbb C} {\mathcal O}_{C}),
$$
where $\widetilde{\chi}_P\,:\, \widetilde{\mathcal H}\, \longrightarrow\,
 H^1(C,\, {\mathbb C}_{\alpha}) \otimes_{\mathbb C} {\mathcal O}_{C}$.
\end{proposition}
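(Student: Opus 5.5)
We construct $\mathcal N$ fiberwise and then check that it is a holomorphic subbundle of $\widetilde{W}$. For $y\in\widetilde C$ with $x=\pi(y)$, the fiber $\widetilde W_y=H^0(\widetilde C,\,K_{\widetilde C}(2y+2\sigma(y)))$ splits by Proposition \ref{epsilon} as $H^0(C,\,K_C(2x))\oplus H^0(C,\,(K_C\otimes\alpha)(2x))$. This splitting is the decomposition of $\pi_*$ into the $\sigma$-invariant and $\sigma$-anti-invariant parts.

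Elements of $\widetilde W_y$ are meromorphic forms on $\widetilde C$ with poles at $y$ and $\sigma(y)$. Such a form defines a de Rham class in $H^1(\widetilde C\setminus\{y,\sigma(y)\},\,\bC)$, and this class lies in $H^1(\widetilde C,\bC)$ exactly when both residues vanish. Since $\sigma$ is an isometry for the flat structure, the invariant/anti-invariant splitting of the forms matches the splitting
\[
H^1(\widetilde C,\bC)=H^1(C,\bC)\oplus H^1(C,\bC_\alpha).
\]
On the invariant part $H^0(C,K_C(2x))$ the residue is automatically zero, so the whole summand maps into $H^1(C,\bC)$ via $j_x$. This gives $\pi^*V$ as the first summand of $\mathcal N$, with map $\pi^*(j)$.

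On the anti-invariant part $(V_P)_x=H^0(C,(K_C\otimes\alpha)(2x))$, the residue at $x$ is again zero because it is the residue of a form with a pole of order at most two whose $1/z$ coefficient is forced. One checks this via the flat frame: the form lies in the kernel of $D_H$ on $C\setminus\{x\}$. By Lemma \ref{hyperplane}, however, $H^1_{D_H}(C,\bC_\alpha)$ is only a hyperplane in $H^1_{D_H}(C\setminus\{x\},\bC_\alpha)$. We therefore define
\[
\widetilde{\mathcal H}_y:=(j_P)_x^{-1}\bigl(H^1(C,\bC_\alpha)\bigr)\subset (V_P)_x .
\]

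Next we identify $\widetilde{\mathcal H}_y$ with $\mathcal H_x=H^0(C,K_C\otimes\alpha)\oplus\langle s_{z,\nu,x}\rangle$, a space of dimension $g$. The inclusion $\mathcal H_x\subset\widetilde{\mathcal H}_y$ follows from \eqref{classe} together with the fact that holomorphic sections give classes in $H^{1,0}$. For the reverse inclusion, $(V_P)_x$ has dimension $g+1$ and the target is a hyperplane. So whenever $(j_P)_x$ is injective, which holds for $C$ non-hyperelliptic or outside the exceptional case of the Lemma, the preimage has dimension exactly $g$, and hence equals $\mathcal H_x$.

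To see that $\widetilde{\mathcal H}$ is a holomorphic subbundle, we describe it as the kernel of the composite
\[
\pi^*V_P\xrightarrow{\;j_P\;} H^1_{D_H}(C\setminus\{x\},\bC_\alpha)\longrightarrow \bC_{\alpha,x},
\]
the last map being the quotient from Lemma \ref{hyperplane}. Concretely, this composite is a "twisted residue'': in a flat frame $\nu$, it is the coefficient of $(z-z(x))^{-1}dz\otimes\nu$. Computing this coefficient is holomorphic in $x$; one can realize it as a residue along $\Delta$ of sections of $q^*(K_C\otimes\alpha)(2\Delta)$, twisted by the flat trivialization. The composite is surjective, since otherwise the whole of $(V_P)_x$ would map to the hyperplane, contradicting the dimension count. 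Hence its kernel is a rank $g$ subbundle. This step is the main obstacle: one has to make the flat-frame residue globally well defined and holomorphic in $x$, and this is easiest after pulling back to $\widetilde C$, where $\alpha$ is trivialized and the residue is simply the ordinary residue at $\sigma(y)$ (equivalently, minus the residue at $y$).

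Finally, we set $\mathcal N=\pi^*V\oplus\widetilde{\mathcal H}$ and $\widetilde J=(\pi^*(j),\widetilde\chi_P)$, where $\widetilde\chi_P$ restricts to $(\chi_P)_x$ on fibers. On $\widetilde V_y=H^0(K_{\widetilde C}(2y))\subset\widetilde W_y$ the forms have no pole at $\sigma(y)$, so the residue conditions hold and $\widetilde V\subset\mathcal N$. Since the de Rham class of a form on $\widetilde C$ decomposes into its invariant and anti-invariant components, $\widetilde J\circ\epsilon=\widetilde j$. Holomorphicity of $\widetilde J$ in $y$ follows because, on the fixed flat local systems, de Rham classes of a holomorphic family of closed forms vary holomorphically (Gauss–Manin).
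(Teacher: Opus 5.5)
Your construction agrees with the paper's in substance: the paper also defines $\mathcal N$ as the kernel of the residue maps $(\overline{r}_1,\overline{r}_2)$ at $y$ and $\sigma(y)$ on $\widetilde W$, and gets the splitting from the $\sigma$-decomposition. But the step that carries the rank-$g$ claim is wrong as written. You assert that on the anti-invariant summand $(V_P)_x$ ``the residue at $x$ is again zero'', and this is false. The residue-theorem argument that kills residues on the invariant part only gives $\mathrm{res}_y\omega+\mathrm{res}_{\sigma(y)}\omega=0$. For $\sigma^*\omega=-\omega$ this is automatic, since $\mathrm{res}_{\sigma(y)}\omega=\mathrm{res}_y\sigma^*\omega=-\mathrm{res}_y\omega$.

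Concretely, take a differential of the third kind $\omega\in H^0(\widetilde C,K_{\widetilde C}(y+\sigma(y)))$ with residues $1,-1$. Then $\omega-\sigma^*\omega$ is anti-invariant with residue $2$ at $y$. Equivalently, since $h^0(K_C\otimes\alpha)=g-1$ while $h^0((K_C\otimes\alpha)(x))=g$, some section of $(K_C\otimes\alpha)(x)$ has a genuine simple pole at $x$, hence a nonzero flat-frame $1/z$ coefficient. Being $D_H$-closed on $C\setminus\{x\}$ says nothing about this coefficient, just as $dz/z$ is closed. If your claim were true, $\widetilde{\mathcal H}$ would be all of $\pi^*V_P$, of rank $g+1$.

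Your later rank argument does not repair this. Injectivity of $(j_P)_x$ does not make the preimage of the hyperplane $H^1(C,\bC_\alpha)$ have codimension one. That happens exactly when $(j_P)_x((V_P)_x)\not\subset H^1(C,\bC_\alpha)$, i.e.\ when the twisted residue functional on $(V_P)_x$ is nonzero. You then justify surjectivity of the twisted residue by ``the dimension count'', which is the very statement you are trying to prove, so the argument is circular. It would also only cover the points where $(j_P)_x$ is injective.

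The missing input is the nonvanishing of the twisted residue on $(V_P)_x$ for every $x$, and the examples above supply it. In the paper this is the exactness of \eqref{2y}: the image of $(\overline{r}_1,\overline{r}_2)_y$ is exactly $\ker s$, which is again the existence of third-kind differentials. This gives $\mathrm{rk}\,\mathcal N=2g+1$ and $\mathrm{rk}\,\widetilde{\mathcal H}=g$ at every point, with no hyperellipticity caveat. The identification $\widetilde{\mathcal H}_y=\mathcal H_x$ is not needed for this statement.

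With this fixed, the remaining difference from the paper is one of formalism. The paper obtains $\mathcal N=\widetilde p_*\widetilde{\mathcal E}$ and $\widetilde J$ as the connecting map of $\widetilde p_*$ applied to \eqref{1}, so holomorphicity and $\mathcal O_{\widetilde C}$-linearity come for free. Your ``Gauss--Manin'' step should be made precise as follows: periods of $\omega_y$ over cycles avoiding a fixed neighbourhood of $y_0$ and $\sigma(y_0)$ are holomorphic in $y$, and because the residues vanish they do not depend on the choice of such cycles.
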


\begin{proof}
We follow the strategy of the proof of \cite[Proposition 3.5]{bcfp}.
Consider the de Rham differential 
$$
d\,:\, {\mathcal O}_{\widetilde{S}}(\widetilde{\Delta} + \Gamma)\,\longrightarrow\,
\Omega^1_{\widetilde{S}} \otimes{\mathcal O}_{\widetilde{S}}(\widetilde{2\Delta} +
2\Gamma)\, \cong\, (\widetilde{p}^* K_{\widetilde{C}} \oplus \widetilde{q}^*
K_{\widetilde{C}}) \otimes{\mathcal O}_{\widetilde{S}}(\widetilde{2\Delta} + 2\Gamma),
$$
and denote by
\begin{equation}\label{et}
T\, :\,{\mathcal O}_{\widetilde{S}}(\widetilde{\Delta} + \Gamma)
\, \longrightarrow\,
\widetilde{q}^* K_{\widetilde{C}} \otimes{\mathcal O}_{\widetilde{S}}(\widetilde{2\Delta}
+ 2\Gamma)
\end{equation}
the composition of the above homomorphism $d$ with the projection of
$(\widetilde{p}^* K_{\widetilde{C}} \oplus \widetilde{q}^*
K_{\widetilde{C}}) \otimes{\mathcal O}_{\widetilde{S}}(\widetilde{2\Delta} + 2\Gamma)$
onto the second factor.

The following lemma will be used in the proof of the proposition.

\begin{lemma}\label{lem-p}
The kernel of $T$ in \eqref{et} is the subsheaf\,
$p^{-1}({\mathcal O}_{\widetilde{C}})$ of
${\mathcal O}_{\widetilde{S}}(\widetilde{\Delta} + \Gamma)$.
\end{lemma}

\begin{proof}[{Proof of Lemma \ref{lem-p}}]
The result is local in $\widetilde{C}$.

Fix a holomorphic local coordinate function $z$ on $\widetilde{C}$, and set $z_1\,:=\, z \circ
\widetilde{p}$ and $z_2\,:=\, z \circ\widetilde{q}$. Since $\widetilde{\Delta}\cap
\Gamma \,=\, \emptyset$, we can 
choose an open covering of $\widetilde{C}\times\widetilde{C}$ by open subsets
of the form $\widetilde{U}\, =\, U_1 \times U_2$, with coordinate function
$(z_1,\, z_2)$ such that the following conditions hold:
\begin{itemize}
\item If $\widetilde{U}\cap\widetilde{\Delta}\, \neq\, \emptyset$, then
$\widetilde{U} \cap \Gamma \,=\, \emptyset$, and

\item if $\widetilde{U} \cap \Gamma \,\neq\, 
\emptyset$, then $\widetilde{U}\cap\widetilde{\Delta}\, =\, \emptyset$.
\end{itemize}

First assume that $\widetilde{U} \cap \Gamma\, =\, \emptyset$. Set $x \,=\, z_2-z_1$,
\, $y \,=\, z_1 + z_2$, and take a local section $f\,=\, \frac{a(y)}{x} + b(x,y)$
of ${\mathcal O}_{\widetilde{S}}(\widetilde{\Delta} +
\Gamma)\big\vert_{\widetilde{U}}\, =\, {\mathcal O}_{\widetilde{S}}
(\widetilde{\Delta})\big\vert_{\widetilde U}$, where $b(x,\,y)$ is holomorphic. We have 
$$T(f)\ =\ \frac{\partial f}{\partial z_2} dz_2\ =\
\left(-\frac{a(y)}{x^2} + \frac{a'(y)}{x}\right) dz_2 + \frac{\partial b}{\partial z_2}dz_2,$$
and hence $T(f) \,=\, 0$ if and only if the following two statements hold:
\begin{itemize}
\item $a(y)\,\equiv\, 0$, and
\item $\frac{\partial b}{\partial z_2} \,=\,0$.
\end{itemize}
Therefore, $T(f)\,=\, 0$ if and only if $f \,=\, b(z_1)$, which proves
the lemma in this case.

Next assume that $\widetilde{U}\cap\widetilde{\Delta}\, =\, \emptyset$. Set
$w\,=\, z_2 - \sigma(z_1)$ and $u \,=\, z_2 + \sigma(z_1)$. Take a local section
$f\,=\, \frac{a(u)}{w} + b(u,w)$ of ${\mathcal O}_{\widetilde{S}}(\widetilde{\Delta} +
\Gamma)\big\vert_{\widetilde{U}} \,=\, {\mathcal O}_{\widetilde{S}}
(\Gamma)\big\vert_{\widetilde{U}}$, where $b(u,\, w)$ is holomorphic. We have 
$$T(f)\ =\ \frac{\partial f}{\partial z_2} dz_2\ =\
\left(-\frac{a(u)}{w^2} + \frac{a'(u)}{w}\right) dz_2 +
\frac{\partial b}{\partial z_2}dz_2,$$ hence
$T(f) \,=\, 0$ if and only if the following two hold:
\begin{itemize}
\item $a(u)\,\equiv\, 0$, and
\item $\frac{\partial b}{\partial z_2} \,=\,0$.
\end{itemize}
Therefore, $T(f)\,=\, 0$ if and only if $f \,=\, b(z_1)$, which proves the lemma in this
case also. Thus the proof of the lemma is complete.
\end{proof}

Continuing with the proof of the proposition,
in view of Lemma \ref{lem-p}, setting
$\widetilde{\mathcal E}\, =\, {\rm Im}(T)$, there is the following short exact sequence of sheaves
on $\widetilde{S}$
\begin{equation}\label{1}
0\, \longrightarrow\, {\widetilde{p}}^{-1}({\mathcal O}_{\widetilde{C}})\, \longrightarrow\,
{\mathcal O}_{\widetilde{S}}(\widetilde{\Delta} + \Gamma)\, \longrightarrow\,
\widetilde{\mathcal E}\, \longrightarrow\, 0.
\end{equation}

We will construct a map
\begin{equation}\label{r}
r\ =\ (r_1,\, r_2)\ :\ \widetilde{q}^* K_{\widetilde{C}} \otimes
{\mathcal O}_{\widetilde{S}}(\widetilde{2\Delta} + 2\Gamma)\ \longrightarrow\
{\widetilde{p}}^{-1}({\mathcal O}_{\widetilde{C}}) \oplus
{\widetilde{p}}^{-1}({\mathcal O}_{\widetilde{C}})
\end{equation}
as follows: Fix an open covering of $\widetilde{S}$ by open
subsets of the form $\widetilde{U}$ such that 
\begin{itemize}
\item either $\widetilde{U} \cap (\widetilde{\Delta} \cup \Gamma)\, =\, \emptyset$,
 
\item or $\widetilde{U}\cap\widetilde{\Delta}\, \neq\,\emptyset$ and
$\widetilde{U} \cap \Gamma \,=\, \emptyset$,

\item or $\widetilde{U} \cap \widetilde{\Delta}\, =\,\emptyset$ and
$\widetilde{U} \cap \Gamma \,\neq\, \emptyset$.
\end{itemize}
In the first case where $\widetilde{U}\cap (\widetilde{\Delta} \cup \Gamma) \,=\, \emptyset$, for a
section $\omega$ of $\widetilde{q}^* K_{\widetilde{C}} \otimes
{\mathcal O}_{\widetilde{S}}(\widetilde{2\Delta} + 2\Gamma)\big\vert_{\widetilde{U}}$,
define $r_1(\omega)\, =\,0$ and $r_2(\omega) \,=\,0$. In the
second case where $\widetilde{U} \cap \widetilde{\Delta}
\,\neq \, \emptyset$ and $\widetilde{U} \cap \Gamma \,=\, \emptyset$, define $r_2(\omega)
\,=\,0$ and define
$$r_1(\omega)\ :=\ \int_{\gamma_{z_1}} \omega,$$
where $(z_1,\,z_2) \,\in \,\widetilde{U}\, =\, U_1 \times U_2$ and $\gamma_{z_1}$ is a
small oriented circle around $z_1$ in $(\{z_1\}\times\widetilde{C})\cap\widetilde{U}$; 
assuming $D_{2 \epsilon}\times D_{2 \epsilon}\,\subset\, U_1 \times U_2$, and
taking $\omega\, =\, (\frac{A(y)}{x^2} + \frac{B(y)}{x} + C(x,y))dz_2$, we have 
$$r_1(\omega)(z_1)\ =\ \int_{|z_2-z_1| = \epsilon} \omega = \int_{|z_2-z_1|
= \epsilon} \left(\frac{A(2z_1) + (z_2-z_1)(A'(2z_1) + B(2z_1))}{(z_2-z_1)^2} + h\right) dz_2$$
$$=\ 2 \pi\sqrt{-1} (A'(2z_1) + B(2z_1)),$$
where $h$ is holomorphic. So in this case $r(\omega)\, =\, (r_1(\omega),\, r_2(\omega) )
\,=\, (0,\,0)$ if and only if $A'(2z_1) \,=\, - B(2z_1)$, so $r(\omega)\, =\, 0$ if and only if
$\omega \,\in\, {\widetilde{\mathcal E}}(\widetilde{U})$. In the third
case where $\widetilde{U} \cap\widetilde{\Delta}\,= \, \emptyset$ and $\widetilde{U}\cap
\Gamma \,\neq\, \emptyset$, set $r_1(\omega) \,=\,0$ and define
$$r_2(\omega)\ :=\ \int_{\gamma_{\sigma(z_1)}} \omega,$$
where $\gamma_{\sigma(z_1)}$ is a small oriented circle around $\sigma(z_1)$ in
$(\{\sigma(z_1)\} \times \widetilde{C})\cap\widetilde{U}$. 
Assuming that $D_{2 \epsilon} \times D_{2 \epsilon}\,\subset\, U_1 \times U_2$ and
taking $\omega \,=\, (\frac{A(u)}{w^2} + \frac{B(u)}{w} + C(u,w))dz_2$, we have 
$$r_2(\omega)(z_1)\ =\ \int_{|z_2-\sigma(z_1)| = \epsilon} \omega$$
$$=\,\int_{|z_2-\sigma(z_1)| = \epsilon} \left(\frac{A(2\sigma(z_1)) + (z_2-\sigma(z_1))(A'(2\sigma(z_1)) + B(2\sigma(z_1)))}{(z_2-\sigma(z_1))^2} + h\right) dz_2$$
$$=\ 2 \pi\sqrt{-1} (A'(2\sigma(z_1)) + B(2\sigma(z_1))).$$
So, $r(\omega) \,=\, (r_1(\omega),\, r_2(\omega) ) \,=\, (0,\,0)$ if and only if
$A'(2\sigma(z_1)) \,=\, - B(2\sigma(z_1))$, so $r(\omega) \,=\, 0$ if and only if
$\omega \,\in\, {\widetilde{\mathcal E}}(\widetilde{U})$. 

Thus, ${\widetilde{\mathcal E}}$ is the kernel of $r\,=\,(r_1,\,r_2)$. Hence there is
the following short exact sequence of sheaves on $\widetilde{S}$
\begin{equation}\label{2}
0\, \longrightarrow\,\widetilde{\mathcal E}\, \longrightarrow\,
\widetilde{q}^* K_{\widetilde{C}} \otimes{\mathcal O}_{\widetilde{S}}(\widetilde{2\Delta}
+ 2 \Gamma)\, \longrightarrow\, {\widetilde{p}}^{-1}({\mathcal O}_{\widetilde{C}})
\oplus {\widetilde{p}}^{-1}({\mathcal O}_{\widetilde{C}})\, \longrightarrow\, 0.
\end{equation}

Denote by $s_1$ the restriction of $r_1$ (see \eqref{r})
to $\widetilde{q}^* K_{\widetilde{C}} \otimes {\mathcal 
O}_{\widetilde{S}}(\widetilde{2\Delta})$, and also denote by $s_2$ the restriction of $r_2$
to $\widetilde{q}^* K_{\widetilde{C}} \otimes {\mathcal O}_{\widetilde{S}}(2\Gamma)$.
Set $${\mathcal E}_1\ :=\ Ker (\widetilde{q}^* K_{\widetilde{C}}\otimes
{\mathcal O}_{\widetilde{S}}(\widetilde{2\Delta})\ \stackrel{s_1}\longrightarrow\
{\widetilde{p}}^{-1}({\mathcal O}_{\widetilde{C}})),$$
$${\mathcal E}_2\ :=\ Ker (\widetilde{q}^* K_{\widetilde{C}} \otimes
{\mathcal O}_{\widetilde{S}}(2\Gamma)\ \stackrel{s_2}\longrightarrow\
{\widetilde{p}}^{-1}({\mathcal O}_{\widetilde{C}})).$$
They fit in the short exact sequences
\begin{equation}\label{E_1}
0 \,\longrightarrow\, {\mathcal E}_1\, \longrightarrow\, \widetilde{q}^* K_{\widetilde{C}}
\otimes{\mathcal O}_{\widetilde{S}}(\widetilde{2\Delta})\, \longrightarrow\,
{\widetilde{p}}^{-1}({\mathcal O}_{\widetilde{C}})\, \longrightarrow\, 0,
\end{equation}
\begin{equation}
\label{E_2}
0\, \longrightarrow\, {\mathcal E}_2\,\longrightarrow\, \widetilde{q}^* K_{\widetilde{C}}
\otimes{\mathcal O}_{\widetilde{S}}(2\Gamma)\,\longrightarrow\,
{\widetilde{p}}^{-1}({\mathcal O}_{\widetilde{C}})\, \longrightarrow\, 0.
\end{equation}

Clearly we have the inclusion maps ${\mathcal E}_i \,\subset\, {\widetilde{\mathcal E}}
\,=\, Ker(r_1,\,r_2)$, $i\, =\,1,\,2$. 

Taking the push-forward, by $\widetilde p$, of \eqref{2} the following
sequence is obtained
\begin{equation}\label{p2}
0\,\longrightarrow\, \widetilde{p}_*(\widetilde{\mathcal E})\, \longrightarrow\,
\widetilde{W}\, \xrightarrow{\,\,\,(\overline{r}_1, \overline{r}_2)\,\,\,}\,
{\mathcal O}_{\widetilde{C}}\oplus {\mathcal O}_{\widetilde{C}}\, \longrightarrow\,
R^1{\widetilde{p}}_*(\widetilde{\mathcal E}) \, \longrightarrow\, 0,
\end{equation}
where $(\overline{r}_1,\, \overline{r}_2)$ is the map induced by $(r_1,\,r_2)$ on the
push-forward $\widetilde{W}$ (of $\widetilde{q}^* K_{\widetilde{C}}\otimes 
{\mathcal O}_{\widetilde{S}}(\widetilde{2\Delta} + 2 \Gamma)$). Moreover,
we have $\widetilde{p}_*({\mathcal E}_1) \,\cong\, \widetilde{V}$ (see \cite[(3.11)]{bcfp}),
and $$\widetilde{p}_*({\mathcal E}_2)\ \cong\ \widetilde{p}_*(\widetilde{q}^* K_{\widetilde{C}}
\otimes{\mathcal O}_{\widetilde{S}}(2\Gamma)).$$ The last isomorphism is analogous to the
previous one and it can be seen as follows: 
Take a small open subset $\widetilde{U}\, \subset\, \widetilde{C}$ as before that
intersects $\Gamma$ and does not intersect $\widetilde{ \Delta}$, and
take $$\omega \,\in\, H^0(\widetilde{U},\, \widetilde{p}_*(\widetilde{q}^* K_{\widetilde{C}}
\otimes{\mathcal O}_{\widetilde{S}}(2\Gamma))) \,=\, H^0(\widetilde{U} \times \widetilde{C},
\, \widetilde{q}^* K_{\widetilde{C}} \otimes{\mathcal O}_{\widetilde{S}}(2\Gamma)).$$
Then $\omega(y, \,\cdot) \,\in\, H^0(\widetilde{C},\, K_{\widetilde{C}}(2 \sigma(y)))$
for any $y \,\in\, \widetilde{U}$, so it follows that $r_2(\omega)(\sigma(y))\, =\,0$.
Consequently, $\omega \,\in\, H^0(\widetilde{U} \times \widetilde{C},\,
\widetilde{\mathcal E}) \,=\, \widetilde{p}_*(\widetilde{\mathcal E})(\widetilde{U})$. 

Hence we have the inclusion maps
$$\widetilde{V}\ \cong\ \widetilde{p}_*({\mathcal E}_1)\ \subset\
\widetilde{p}_*(\widetilde{\mathcal E})$$ and
$$\widetilde{{\mathcal U}}\ :=\ \widetilde{p}_*(\widetilde{q}^* K_{\widetilde{C}}
\otimes{\mathcal O}_{\widetilde{S}}(2\Gamma))\ \cong\
\widetilde{p}_*({\mathcal E}_2)\ \subset\ \widetilde{p}_*(\widetilde{\mathcal E}).$$

At a point $y\, \in \,\widetilde{C}$, the map $(\overline{r}_1,\, \overline{r}_2)$
in \eqref{p2}
$$ \widetilde{W}_y\ =\ H^0(\widetilde{C},\, K_{\widetilde{C}}(2y + 2 \sigma(y))\
\xrightarrow{\,\,\,(\overline{r}_1, \overline{r}_2)_y\,\,\,}\ {\mathbb C} \oplus {\mathbb C} $$
is given by $(\overline{r}_1,\, \overline{r}_2)_y(\omega)\,=\, (res_y(\omega),
\, res_{\sigma(y)}(\omega))$. 
Hence we have the exact sequence
\begin{equation}
\label{2y}
0\, \rightarrow\, H^0(\widetilde{C},\, K_{\widetilde{C}})\, \longrightarrow\,
H^0(\widetilde{C},\, K_{\widetilde{C}}(2y)) \oplus H^0(\widetilde{C},\, K_{\widetilde{C}}
(2 \sigma(y))\, \longrightarrow
\end{equation}
$$
\longrightarrow\, H^0(\widetilde{C},\, K_{\widetilde{C}}(2y + 2 \sigma(y)))\,
\xrightarrow{\,\,\,(\overline{r}_1, \overline{r}_2)_y\,\,\,}\,{\mathbb C}
\oplus {\mathbb C}\, \stackrel{s}{\longrightarrow}\, {\mathbb C}\,\longrightarrow\, 0,
$$
where $s(a,\,b) \,=\, a+b$, and exact sequence \eqref{p2} at $y$ is 
$$0\ \longrightarrow\ (H^0(\widetilde{C},\, K_{\widetilde{C}}(2y)) \oplus
H^0(\widetilde{C},\, K_{\widetilde{C}}(2 \sigma(y)))/H^0(\widetilde{C},\, K_{\widetilde{C}})
\ \longrightarrow\ H^0(K_{\widetilde{C}}(2y + 2 \sigma(y)))
$$
$$
\xrightarrow{\,\,\,(\overline{r}_1, \overline{r}_2)_y\,\,\,}\ {\mathbb C}
\oplus {\mathbb C}\ \stackrel{s}{\longrightarrow}\ {\mathbb C}\ \longrightarrow\ 0.$$

Since $\widetilde{V}_y\, \cong\, \widetilde{p}_*({\mathcal E}_1)_y\, =\, H^0(\widetilde{C},
\, K_{\widetilde{C}}(2y))$ and $\widetilde{{\mathcal U}}_y\,=\, \widetilde{p}_*({\mathcal E}_2)
\,=\, H^0(\widetilde{C},\, K_{\widetilde{C}}(2 \sigma(y))$, the map
$(\overline{r}_1,\, \overline{r}_2)$ in \eqref{p2} fits in the following exact sequence:
\begin{equation}\label{p2'}
0\,\longrightarrow\, \widetilde{p}_*(\widetilde{q}^* K_{\widetilde{C}})\, \longrightarrow\,
\widetilde{V} \oplus \widetilde{\mathcal U}\, \longrightarrow\,
\widetilde{W}\, \xrightarrow{\,\,\,(\overline{r}_1, \overline{r}_2)\,\,\,}\,
{\mathcal O}_{\widetilde{C}}\oplus {\mathcal O}_{\widetilde{C}}\, \longrightarrow
\, {\mathcal O}_{\widetilde{C}}\, \longrightarrow\, 0,
\end{equation}
so we have isomorphisms: 
$$\widetilde{p}_*(\widetilde{\mathcal E}) \,\cong\,(\widetilde{V} \oplus \widetilde{\mathcal U})/
\widetilde{p}_*(\widetilde{q}^* K_{\widetilde{C}}),\ \ \ 
R^1{\widetilde{p}}_*(\widetilde{\mathcal E})\,\cong\, {\mathcal O}_{\widetilde{C}}.$$
 
Now, with the notation of the proof of Proposition \ref{epsilon}, we have, for $i\, =\,0,\,1$,
the following:
$$R^i\widetilde{p}_*{\mathcal O}_{\widetilde{S}}(\widetilde{\Delta} + \Gamma)
\,\cong\, R^i\widetilde{p}_*(\Phi^*({\mathcal O}_{\widetilde{S}}(\Delta)))\,=\,
R^i(p_1 \circ f)_*((g \circ f)^*({\mathcal O}_{S}(\Delta))$$
$$\cong\, (R^i(p_1)_* \circ f_*)(f^* \circ g^*({\mathcal O}_{S}(\Delta)))
\,\cong\, R^i(p_1)_*(g^*({\mathcal O}_{S}(\Delta)) \otimes ({\mathcal O}_{\widetilde{C}}
\boxtimes ({\mathcal O}_C \oplus \alpha)))$$
$$\cong\, R^i(p_1)_*(g^*({\mathcal O}_{S}(\Delta)) \oplus R^1(p_1)_*(g^*({\mathcal O}_{S}(\Delta) \otimes q^*(\alpha))).$$
Finally, using flat base change (as done in the proof of Proposition \ref{epsilon}),
it follows that 
\begin{equation}
\label{bb}
R^i\widetilde{p}_*{\mathcal O}_{\widetilde{S}}(\widetilde{\Delta} + \Gamma)\ \cong\
\pi^*(R^ip_*({\mathcal O}_{S}(\Delta)) \oplus \pi^*(R^ip_*({\mathcal O}_{S}(\Delta)
\otimes q^*(\alpha)))
\end{equation}
for $i\,=\,0,\,1$.

So 
$$\widetilde{p}_*{\mathcal O}_{\widetilde{S}}(\widetilde{\Delta} + \Gamma)\ \cong\
{\mathcal O}_{\widetilde{C}} \oplus \pi^*(p_*({\mathcal O}_S(\Delta) \otimes q^*(\alpha))), $$
from which it follows that if $C$ is not hyperelliptic, then
$p_*({\mathcal O}_S(\Delta) \otimes q^*(\alpha))\, =\,0$, because $H^0(C,\, \alpha(x))\,=\, 0$,
for all $x \,\in\, C$. Consequently, we have $\widetilde{p}_*{\mathcal O}_{\widetilde{S}}
(\widetilde{\Delta} + \Gamma) \,\cong\, {\mathcal O}_{\widetilde{C}}
\,\cong\, \widetilde{p}_* (\widetilde{p}^{-1}( {\mathcal O}_{\widetilde{C}}))$. 

In \cite[(3.12), (3.13)]{bcfp} it was shown that
$$R^1\widetilde{p}_* (\widetilde{p}^{-1}( {\mathcal O}_{\widetilde{C}}))\, \cong\,
H^1(\widetilde{C},\, {\mathbb C}) \otimes {\mathcal O}_{\widetilde{C}},\ \ \
R^2\widetilde{p}_* (\widetilde{p}^{-1}( {\mathcal O}_{\widetilde{C}}))
\,\cong\, {\mathcal O}_{\widetilde{C}}.$$ 
So, applying $\widetilde{p}_*$ to the exact sequence in \eqref{1} the following exact
sequence is obtained:
\begin{equation}
 \label{p1}
 0\,\longrightarrow\, {\mathcal O}_{\widetilde{C}}\,\longrightarrow\,
\widetilde{p}_*{\mathcal O}_{\widetilde{S}}(\widetilde{\Delta} + \Gamma)\, \rightarrow\,
\widetilde{p}_*(\widetilde{\mathcal E})\, \stackrel{\widetilde{J}}\longrightarrow\,
H^1(\widetilde{C},\, {\mathbb C}) \otimes {\mathcal O}_{\widetilde{C}}\,\longrightarrow\,
R^1\widetilde{p}_*{\mathcal O}_{\widetilde{S}}(\widetilde{\Delta} + \Gamma)\,
\longrightarrow\, 0,
 \end{equation}
and this gives the extension ${\widetilde{J}}$ of the map $\widetilde{j}$ to the vector bundle
$$ {\mathcal N}\ :=\ \widetilde{p}_*(\widetilde{\mathcal E})\ \subset\ \widetilde{W}.$$ 
 
Notice that if $C$ is not hyperelliptic, then the map $\widetilde{J}$ is injective. 

Using \eqref{bb}, equation \eqref{p1} becomes 
$$
0\,\longrightarrow\, \pi^*(p_*({\mathcal O}_S(\Delta) \otimes q^*(\alpha)) )\,\longrightarrow\,
\widetilde{p}_*(\widetilde{\mathcal E})\, \stackrel{\widetilde{J}}{\longrightarrow}\,
H^1(\widetilde{C},\, {\mathbb C}) \otimes {\mathcal O}_{\widetilde{C}}$$
$$\longrightarrow\, \pi^*(R^1p_*({\mathcal O}_{S}(\Delta)) \oplus \pi^*(R^1p_*({\mathcal O}_{S}(\Delta)
\otimes q^*(\alpha)))\, \longrightarrow\, 0
$$
At the end of the proof of \cite[Proposition 3.5]{bcfp} it is shown that there is a short exact sequence 
$$0\, \longrightarrow\, V \,\stackrel{j}{\longrightarrow}\, H^1(C,\, {\mathbb C}) \otimes {\mathcal O}_C
\,\longrightarrow\, R^1p_*{\mathcal O}_S(\Delta)\, \longrightarrow\, 0.$$
Taking its pullback by the map $\pi$ yields the short exact sequence 
\begin{equation}
\label{j1}
0\, \longrightarrow\, \pi^*V\, \stackrel{\pi^*(j)}{\longrightarrow}\, H^1(C,\, {\mathbb C})
\otimes {\mathcal O}_{\widetilde{C}}\, \longrightarrow\, \pi^* (R^1p_*{\mathcal O}_S(\Delta))
\, \longrightarrow\, 0.
\end{equation}

The inclusion map ${\mathcal N} \,\subset\, \widetilde{W} \,\cong\, \pi^*V \oplus \pi^* V_P$, the
isomorphism $$H^1(\widetilde{C}, \,{\mathbb C}) \otimes {\mathcal O}_{\widetilde{C}}
\,\cong\, (H^1(C,\, {\mathbb C}) \otimes {\mathcal O}_{\widetilde{C}}) \oplus
(H^1(C,\, {\mathbb C}_{\alpha}) \otimes {\mathcal O}_{\widetilde{C}})$$
and the isomorphism in \eqref{bb} together imply that the direct image,
under the map $\pi$, of the exact sequence in
\eqref{p1} splits in two exact sequences: the first one is \eqref{j1}, while the second one is 
\begin{equation}
\label{H}
0\, \longrightarrow \,\pi^*(p_*({\mathcal O}_S(\Delta) \otimes q^*(\alpha)))\, \longrightarrow\,
\widetilde{{\mathcal H} }\, \xrightarrow{\,\,\,\widetilde{\chi}_P\,\,\,}\,
 H^1(C,\, {\mathbb C}_{\alpha}) \otimes {\mathcal O}_{\widetilde{C}}
\end{equation}
$$
\longrightarrow \, \pi^* (R^1p_*({\mathcal O}_S(\Delta) \otimes q^*(\alpha)))\, \longrightarrow\, 0,
$$
where $\widetilde{{\mathcal H}}\, \subset\, \pi^*(V_P)$ and ${\mathcal N} \,\cong\,
\pi^*V \oplus\widetilde{\mathcal H}$. 

When $C$ is not hyperelliptic, the following is a short exact sequence
\begin{equation}\label{H'}
0\, \longrightarrow\, \widetilde{{\mathcal H} }\,\xrightarrow{\,\,\,\widetilde{\chi}_P\,\,\,}\,
H^1(C,\, {\mathbb C}_{\alpha}) \otimes {\mathcal O}_{\widetilde{C}}\, \longrightarrow\,
\pi^* (R^1p_*({\mathcal O}_S(\Delta) \otimes q^*(\alpha)))\,\longrightarrow\, 0.
\end{equation}
This completes the proof.
\end{proof}

\begin{remark}\label{rem}
Taking the push-forward of the exact sequence in \eqref{H} by $\pi$ and then taking its invariant part
(for the action of ${\mathbb Z}/2\mathbb Z$), we have the exact sequence
\begin{equation}\label{j_P}
0\, \longrightarrow\, p_*({\mathcal O}_S(\Delta) \otimes q^*(\alpha))\, \longrightarrow\,
{\mathcal H}\, \xrightarrow{\,\,\,\chi_P\,\,\,}\, H^1(C,\, {\mathbb C}_{\alpha}) \otimes
{\mathcal O}_{C}
\end{equation}
$$
\, \longrightarrow \, R^1p_*({\mathcal O}_S(\Delta) \otimes q^*(\alpha))\, \longrightarrow\, 0,
$$
where ${\mathcal H}\, =\, \pi_*(\widetilde{\mathcal H})^+ \,\subset\, V_P$ (the invariant part
under $\sigma$). Both ${\mathcal H}_x
\,=\, H^0(C,\, K_C \otimes \alpha) \oplus \langle (s_P)_x \rangle$ and $(\chi_P)_x$ are as in
the diagram in \eqref{diagx}. Hence ${\mathcal H}$ is a holomorphic vector bundle of rank $g$ and $\widetilde{\mathcal H} = \pi^*{\mathcal H}$. The map $\chi_P$ is injective if $C$ is not hyperelliptic.
\end{remark}

\begin{proposition}
The homomorphism 
$$(\eta_P)_x\,:\, (T_C)_x \otimes (\alpha)_x\, \longrightarrow\, {\mathcal H}_x \,\subset\, (V_P)_x
\,=\, H^0(C,\, K_C\otimes \alpha(2x))$$
varies holomorphically as $x$ moves over $C$. Thus it gives a holomorphic map 
\begin{equation}\label{etap}
\eta_P\ :\ T_C \otimes \alpha\ \longrightarrow \ {\mathcal H} \subset V_P.
\end{equation}
Moreover, the following diagram is commutative 
\begin{equation}\label{cdetaP}
\begin{tikzcd}[column sep=large] 
{} & H^1(C,\,\alpha) \otimes_{\mathbb C} {\mathcal O}_C\arrow{rd}{(i_P)_C} & \\
T_C\otimes \alpha \arrow{ru}{\psi_P} \arrow{rd}[swap]{-{\eta_P}} & & H^1(C,\, {\mathbb C}_{\alpha}) \\
& {\mathcal H}\arrow{ru}{\chi_P} &
\end{tikzcd}
\end{equation}
\end{proposition}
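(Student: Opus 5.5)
The plan is to derive holomorphicity of $\eta_P$ from the commutativity of \eqref{cdetaP}, together with the injectivity of $\chi_P$ when $C$ is not hyperelliptic (Remark \ref{rem}). The general case would then follow by a local argument, or directly from the explicit form \eqref{eta_p}.

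First, the pointwise commutativity. Proposition \ref{diagx} gives, for every $x\in C$,
\[
(\chi_P)_x\circ(-(\eta_P)_x)\,=\,(i_P)_C\circ(\psi_P)_x .
\]
By Remark \ref{rem}, the fibre of ${\mathcal H}$ at $x$ is ${\mathcal H}_x=H^0(C,K_C\otimes\alpha)\oplus\langle s_{z,\nu,x}\rangle$, and the fibre of $\chi_P$ at $x$ is $(\chi_P)_x$. So \eqref{cdetaP} commutes fibrewise, provided we know that $(\eta_P)_x$ lands in ${\mathcal H}_x$. This holds by the definition \eqref{a2}.

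Second, holomorphicity of the other maps. The map $\psi_P\colon T_C\otimes\alpha\to H^1(C,\alpha)\otimes{\mathcal O}_C$ is the dual of the evaluation map
\[
H^0(C,K_C\otimes\alpha)\otimes{\mathcal O}_C\,\to\, K_C\otimes\alpha ,
\]
which is a holomorphic bundle map, so $\psi_P$ is holomorphic. The map $(i_P)_C$ is a constant linear map. The map $\chi_P$ is a holomorphic homomorphism of vector bundles: it is obtained by direct image from the exact sequence \eqref{H}, hence from the sheaf sequence \eqref{1}, and taking invariants.

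Third, assume $C$ is not hyperelliptic. Then $p_*({\mathcal O}_S(\Delta)\otimes q^*\alpha)=0$, so $\chi_P\colon{\mathcal H}\to H^1(C,{\mathbb C}_\alpha)\otimes{\mathcal O}_C$ is an injective bundle map whose image is a holomorphic subbundle. By the first step, the composite $-\chi_P\circ\eta_P$ equals $(i_P)_C\circ\psi_P$, which is holomorphic. A ${\mathcal C}^\infty$ section whose image under a holomorphic injection of bundles (injective on fibres) is holomorphic is itself holomorphic: locally, choose a holomorphic left inverse of $\chi_P$. Hence $\eta_P$ is holomorphic.

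Before this step one must check that $\eta_P$ is at least continuous and a genuine section of $E$. This is where $\eta_P$ was only described pointwise as ${\mathcal C}^\infty$. Continuity follows because all the choices in \eqref{def:g_i} (the bump function $b_x$, the $h_i$) can be made smoothly in $x$.

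The main obstacle is the hyperelliptic case, where $\chi_P$ may fail to be injective at the finitely many Weierstrass points with $\alpha={\mathcal O}_C(x-y)$. Away from those points the argument above applies. At the exceptional points I would try two things. The first is to use Riemann's removable singularity theorem: $\eta_P$ is holomorphic off a finite set and bounded, hence holomorphic. Boundedness comes from continuity, which I would verify from the normalization in \eqref{eta_p}: the double-pole coefficient is $-1$ and the residue is determined by $\lambda$. The second is to argue directly that $s_{z,\nu,x}$ is characterized uniquely by its polar part together with the condition $[s_{z,\nu,x}]\in H^1(C,\alpha)$. That characterization makes it the solution of a holomorphically varying linear system.
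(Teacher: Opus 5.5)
\textbf{Non-hyperelliptic case.} Away from the finitely many exceptional points (so on all of $C$ when $C$ is not hyperelliptic), your argument is correct, and it takes a genuinely different and shorter route than the paper. You work entirely on $C$:
\begin{itemize}
\item for a fixed curve, $(i_P)_C\circ\psi_P$ is holomorphic, being a constant Hodge inclusion composed with the dual of the holomorphic evaluation map;
\item $\chi_P$ is a holomorphic bundle map that is injective on fibres;
\item the pointwise identity of Proposition \ref{diagx} therefore forces $\eta_P=-\chi_P^{-1}\circ(i_P)_C\circ\psi_P$.
\end{itemize}
Two small remarks. Fibrewise injectivity of $\chi_P$, which is what gives a local holomorphic left inverse, comes from the injectivity of $(j_P)_x$ at every $x$, not merely from $p_*(\mathcal{O}_S(\Delta)\otimes q^*\alpha)=0$. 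Also, since this identity determines $(\eta_P)_x$ pointwise, your preliminary continuity check is superfluous.

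The paper instead passes through the double cover. Combining \eqref{cdtilde}, \eqref{cdC}, \eqref{cd} with the injectivity of $j_x$ and $(\chi_P)_x$, it proves \eqref{somma}, $\overline{\epsilon}_y\circ\widetilde{\eta}_y=(\eta_x,(\eta_P)_x)\circ\phi_y$. It then inherits holomorphicity from $\widetilde{\eta}$, $\overline{\epsilon}$ and $\phi$. That costs more, but it yields \eqref{somma}, which is exactly what Theorem \ref{scomposizione} uses, and it exhibits $\eta_P$ as a piece of a globally defined holomorphic object.

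\textbf{Hyperelliptic case: the gap.} That last feature is what your hyperelliptic step lacks. Let $x_0$ be an exceptional point, so $\alpha\cong\mathcal{O}_C(x_0-y)$. Then $h^0(\alpha(x_0))=1$, so by Riemann--Roch $x_0$ is a base point of $K_C\otimes\alpha$. Consequently all $f_i(z(x))\to 0$ as $x\to x_0$, $(\psi_P)_{x_0}=0$, and $s_{z,\nu,x}=\eta_i/f_i(z(x))$ is not even defined at $x_0$.

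Formula \eqref{eta_p} only describes the polar part of $s_{z,\nu,x}$ in the second variable at fixed $x$. It says nothing about the regular part as $x\to x_0$, where the defining quotient is of type $0/0$. So neither continuity nor boundedness near $x_0$ has been shown, and Riemann's extension theorem cannot be applied. In the paper, the $\pi^*V_P$-component of $\overline{\epsilon}\circ\widetilde{\eta}\circ\phi^{-1}$ is holomorphic on all of $\widetilde{C}$ and agrees with $\eta_P$ off finitely many points; this supplies the extension.

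\textbf{How to close it.} Your second idea works once made precise, and it even treats all curves uniformly. Consider the holomorphic bundle map
\[
\Theta:\ \mathcal{H}\longrightarrow \bigl(H^0(C,K_C\otimes\alpha)\otimes\mathcal{O}_C\bigr)\oplus(T_C\otimes\alpha),
\]
sending $t$ to the $H^{1,0}$-component of $\chi_P(t)$ and to its double-pole coefficient in $(K_C\otimes\alpha(2x))_x\cong(T_C\otimes\alpha)_x$.
\begin{itemize}
\item $\Theta$ is injective on each fibre. An element of the kernel is a holomorphic section, so it lies in $H^0(C,K_C\otimes\alpha)$. Its class then lies in $H^{1,0}\cap H^{0,1}=0$, so the element is $0$.
\item Hence $\Theta$ is an isomorphism of rank-$g$ bundles.
\item By \eqref{eta_p} and \eqref{classe}, $\Theta^{-1}\circ(0,-\mathrm{id}_{T_C\otimes\alpha})$ is a holomorphic extension of $\eta_P$ to all of $C$.
\end{itemize}
For this you must check that the fibre of $\mathcal{H}$ at $x_0$ is still the space of sections with zero flat residue. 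This follows from describing $\mathcal{N}$ as the kernel of the residue map in \eqref{p2}. As written, this step is only announced, not carried out.
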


\begin{proof}
In Proposition \ref{jtilde} it was shown that there is an inclusion map of vector bundles 
$$
\widetilde{V}\ \stackrel{\overline{\epsilon}}{\hookrightarrow}\ {\mathcal N}\ \hookrightarrow\ \widetilde{W},
$$
and we have
\begin{equation}
\widetilde{j}_y\ =\ (j_{x} ,\, (\chi_P)_{x})\circ \overline{\epsilon}_y
\end{equation}
for all $y\, \in\, \widetilde{C}$ and $x\,=\, \pi(y)$, hence 
\begin{equation}
\widetilde{j}_y \circ \widetilde{\eta}_y\ =\ (j_{x} ,\, (\chi_P)_{x})\circ\overline{\epsilon}_y
 \circ \widetilde{\eta}_y.
\end{equation}
So using equations \eqref{sotto2}, \eqref{sopra} and \eqref{sotto1}, the following is obtained 
$$(j_{x} ,\, (\chi_P)_{x})\circ \overline{\epsilon}_y \circ \widetilde{\eta}_y
\ =\ (j_x, (\chi_P)_x) \circ (\eta_x, (\eta_P)_x) \circ \phi_y.$$
Since $j_{x}$ and $(\chi_P)_{x}$ are both injective if $C$ is not hyperelliptic, and
are injective on the complement of at most two points if $C$ is hyperelliptic (see \cite{cfg} and
Remark \ref{rem}), it follows that
\begin{equation}\label{somma}
\overline{\epsilon}_y \circ\widetilde{\eta}_y\ =\ (\eta_{x}, \,(\eta_P)_x) \circ \phi_y
\end{equation}
for all $y$ (recall that $x\,=\, \pi(y)$) when $C$ is not hyperelliptic, and when $C$ is
hyperelliptic, then \eqref{somma} holds for for all $y$ but finitely many points
of $\widetilde C$. 
Since $\overline{\epsilon}$, $\phi$ and $\widetilde{\eta}$ all vary holomorphically as the point
$y$ moves over $\widetilde C$, we conclude that $(\eta_P)_x $ varies holomorphically as the
point $x$ moves over $X$. Consequently, there is a homomorphism of vector bundles: 
$$\eta_P\ :\ T_C \otimes \alpha \ \longrightarrow\ {\mathcal H} \subset V_P.$$

The commutativity of the diagram in \eqref{cdetaP} is an immediate consequence
of the commutativity of the diagram in \eqref{cd}. 
\end{proof}

Since $E\,=\,p_* L$ (see \eqref{del}, \eqref{VP}), there is an isomorphism $H^0(C,\,E)\,\cong\, H^0(S,\,L)$
that associates to any $\beta \,\in\, H^0(C,\,E)$ the section $\widehat{\beta} \,\in\,
H^0(S,\,L)$ satisfying the condition
$$
\beta_x\, =\, \widehat{\beta}\big\vert_{\{x\}\times C}\, \in\, T_x^*C \otimes \alpha_x \otimes
H^0(C,\,K_C\otimes \alpha(2x))\, =\, E_x.
$$
It follows that for any $u\,\in\, T_xC \otimes \alpha_x$, and any $v\,\in\, T_y C \otimes \alpha_y$ with
$x\,\neq\, y$, we have $\beta_x (u)(v)\,=\,\widehat{\beta}(u,\,v)$. Thus there is a well-defined
section
\begin{equation}\label{ep}
\widehat{\eta}_P\ \,\in\ \, H^0(S,\,L),
\end{equation}
where $L$ is the line bundle in \eqref{del},
corresponding to $\eta_P$ in \eqref{etap}, and for any $u\,\in\,
T_xC \otimes \alpha_x$ and any $v\,\in\, T_y C \otimes \alpha_y$ with $x\,\neq\, y$, the equality $ (\eta_P)_x
(u)(v)\,=\,\widehat{\eta}_P(u,\, v)$ holds.

\section{A projective structure from the bidifferential}\label{se4}

As before, $\alpha$ is holomorphic line bundle on $C$ of order two equipped with a trivialization
of $\alpha^{\otimes 2}$.

It will be shown that the section $\widehat{\eta}_P\,\in\, H^0(S,\,K_S \otimes \alpha_S(2 \Delta))$
in \eqref{ep} induces a projective structure on $C$, which depends on $\alpha$.

\begin{lemma}\label{lem1}
The line bundle $(p^*\alpha)\otimes (q^*\alpha^*)$ on $S$ is canonically
trivial on an analytic open neighborhood of the diagonal $\Delta \,\subset\, C\times C \,=\, S$.
\end{lemma}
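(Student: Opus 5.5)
The plan is to produce the trivialization from the flat structure on $\alpha$ described in Section \ref{local-system}. The line bundle $\alpha$ comes with the flat connection $D_H$, whose local flat sections form the local system $\bC_\alpha$ with monodromy in $\{\pm 1\}$. So $\alpha^*$ carries the dual flat connection, and $(p^*\alpha)\otimes(q^*\alpha^*)$ carries the flat connection $p^*D_H\otimes q^*D_H^*$. Its restriction to $\Delta$ is $\alpha\otimes\alpha^*\,=\,{\mathcal O}_C$, which has the canonical section $1$ given by the identity pairing. I will choose a tubular neighbourhood $U$ of $\Delta$ that deformation retracts onto $\Delta$; for instance, $U$ can be the union of the products $B_i\times B_i$, where $\{B_i\}$ is a cover of $C$ by small convex coordinate discs (convex for a fixed auxiliary metric).

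The first step is to extend the section $1$ off $\Delta$ by parallel transport. On each $B_i\times B_i$ the local system is trivial, so I take a flat frame $\nu_i$ of $\alpha$ over $B_i$, which can be chosen with values in $\widetilde C$ as in Section \ref{local-system}. Then $\nu_i(x)\otimes\nu_i^*(y)$ is a flat section of $(p^*\alpha)\otimes(q^*\alpha^*)$ on $B_i\times B_i$, and it equals the identity on the diagonal. This section does not depend on the choice of $\nu_i$: the frame is unique up to a constant $c$, and replacing $\nu_i$ by $c\,\nu_i$ multiplies $\nu_i\otimes\nu_i^*$ by $c\,c^{-1}=1$. On overlaps the transition functions are constant (equal to $\pm1$), so the same cancellation shows the local sections agree, and they glue to a nowhere-vanishing section $\tau$ on $U$.

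The second step is to check that $\tau$ is holomorphic, which gives a holomorphic trivialization. This holds because the flat frames $\nu_i$ are themselves holomorphic: $D_H$ is compatible with the holomorphic structure and the cocycle is constant. Alternatively, one can write $\widetilde C\subset\alpha$ and set $\tau(x,y)=\tilde x\otimes\tilde x^{*}$, where $\tilde x$ is any lift of $x$ to $\widetilde C$ transported continuously to $y$. This description shows directly that the construction is canonical, since it uses only $\rho_\alpha$ and not the choice of $U$.

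The step I expect to need the most care is that the gluing is well defined on all of $U$. Each point of $U$ must be joined to the diagonal by a canonical path, and the set $U\cap(\{x\}\times C)$ must be connected and simply connected, so that parallel transport does not pick up the monodromy $-1$. Taking $U$ fibrewise to be a union of discs around $x$ guarantees this: on any intersection of $B_i\times B_i$ with $B_j\times B_j$ the two local sections are both flat and agree at diagonal points. The remaining issue is connectedness of these intersections; if it fails, I will shrink them to the connected components that meet $\Delta$.
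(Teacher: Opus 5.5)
Your proposal is correct and is essentially the paper's argument. The paper likewise covers $C$ by simply connected opens $U$ with simply connected pairwise intersections, and on each $U$ takes a frame $s_U$ normalized by $\phi(s_U\otimes s_U)=1$ (i.e.\ a flat frame with values in $\widetilde C$). It then sets $\gamma_U=(p^*s_U)\otimes(q^*s_U^*)$ on $U\times U$ and glues, after checking that $\gamma_U$ is unchanged under $s_U\mapsto -s_U$ and under rescaling $\phi$.

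Your observation that rescaling a flat frame by any constant leaves $\nu\otimes\nu^*$ unchanged handles both of those checks at once. Your choice of convex discs gives the connected overlaps the gluing needs, so the closing hedge about shrinking intersections is unnecessary.
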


\begin{proof}
Let
\begin{equation}\label{e1}
\phi\ :\ \alpha^{\otimes 2}\ \longrightarrow\ {\mathcal O}_C
\end{equation}
be the trivialization of $\alpha^{\otimes 2}$.
Take any connected and simply connected open subset $U\,\subset\, C$. Choose
a holomorphic section $s_U \,\in\, H^0(U,\, \alpha\big\vert_U)$ such that
$\phi(s_U\otimes s_U)$ is the constant function $1$ on $U$, where $\phi$
is the isomorphism in \eqref{e1}. Let $$s^*_U\,\in\, H^0(U,\, \alpha^*\big\vert_U)$$
be the unique section such that $s^*_U (s_U) \,=\, 1$ (on $U$). Now we have
\begin{equation}\label{e2}
\gamma_U\ :=\ (p^*s_U)\otimes (q^*s^*_U)\ \in\ H^0(U\times U,\, \alpha_S\big\vert_{U\times U}).
\end{equation}

Note that $s_U$ in \eqref{e2} is not unique. In fact, there are exactly two choices of
$s_U$ that differ by only the sign (so $s_U$ and $-s_U$ are the two choices). Since
$(-s_U)^* \,=\, -(s^*_U)$ it follows immediately that $\gamma_U$ in \eqref{e2} is independent
of the choice of $s_U$.

Next it can be shown that $\gamma_U$ is independent of the choice of the isomorphism $\phi$ in
\eqref{e1}. To see this, replace $\phi$ by $c\cdot \phi$, where $c\,\in\, {\mathbb C}\setminus\{0\}$
is a nonzero scalar. Then $s_U$ changes to $d\cdot s_U$, where $d^2 \,=\, \frac{1}{c}$ (the two
solutions of $d$ correspond to the two choices of $s_U$). Also,
$s^*_U$ changes to $\frac{1}{d} s^*_U$. Thus $\gamma_U$ is independent of the choice of
the isomorphism $\phi$.

Covering $C$ by connected and simply connected open subsets $\{U_j\}_{j\in J}$ such that $U_i\cap
U_j$ is simply connected --- if it is nonempty --- we get a canonical trivialization of $p^*(\alpha)
\otimes q^*(\alpha^*)$ on an analytic neighborhood of the diagonal $\Delta \,\subset\, S$. 
\end{proof}

\begin{remark}
\label{triv}
Since $\alpha$ is isomorphic to its dual $\alpha^*$, Lemma \ref{lem1} implies that
$\alpha_S\,=\, (p^*\alpha)\otimes (q^*\alpha)$ is trivial on an analytic neighborhood of
$\Delta \,\subset\, C\times C \,=\, S$. But the trivialization of
$\alpha_S$ on an analytic neighborhood of $\Delta\,\subset\, S$
depends on the choice of the isomorphism of $\alpha^{\otimes 2}$
with ${\mathcal O}_C$.
\end{remark}

We recall the notation introduced in Section 2.1. For any $n\,\geq\, 1$, denote by $(n+1)\Delta$ the $n$-th 
infinitesimal neighborhood of the diagonal $\Delta \,\subset \,S$. The restriction 
$K_S(2\Delta)\big\vert_{\Delta}$ of $K_S(2\Delta)$ to $\Delta$ is the trivial bundle on $\Delta$ and the 
trivialisation $K_S(2\Delta)\big\vert_{\Delta}$ extends to a canonical trivialisation 
$K_S(2\Delta)\big\vert_{2\Delta}$ (see \cite[p.~754, Theorem 2.1]{BR1}, \cite[p.~688, Theorem 2.2]{BR2}). 
Denote by $s_0 \, \in\, H^0(2\Delta,\, K_S(2\Delta)\big\vert_{2\Delta})$ this canonical trivialisation.

We have the following theorem.

\begin{theorem}
\label{betaP}
The restriction of $\widehat{\eta}_P\,\in \,H^0(S,\,K_S\otimes \alpha_S(2 \Delta))$ (see
\eqref{ep}) to the second infinitesimal
neighbourhood of the diagonal gives a section of $H^0(3\Delta,\, K_{S}(2 \Delta)\big\vert_{3 \Delta})$
that induces a projective structure on the curve $C$, which depends on $\alpha$. 
\end{theorem}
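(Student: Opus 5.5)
First I use Lemma \ref{lem1} and Remark \ref{triv} to untwist. On an analytic neighbourhood $U_\Delta$ of $\Delta$ the line bundle $\alpha_S$ has a trivialization $\gamma$, obtained from the sections $(p^*s_U)\otimes(q^*s_U)$ where $\phi(s_U\otimes s_U)=1$. This gives an isomorphism $L|_{U_\Delta}\cong K_S(2\Delta)|_{U_\Delta}$. Since $3\Delta\subset U_\Delta$, restricting $\widehat{\eta}_P$ to $3\Delta$ and untwisting yields a section
\[
t_P\in H^0(3\Delta,\,K_S(2\Delta)\vert_{3\Delta}).
\]
The sign ambiguity of $s_U$ cancels because both factors change sign. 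Rescaling $\phi$ by $c$ multiplies $\gamma$ by a constant. So $t_P$ is canonical up to a nonzero scalar, and I will fix that scalar in the next step.

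By the isomorphism \eqref{ephi}, what must be shown is that $t_P$ lies in $\mathcal Q(C)$, that is, $t_P\vert_{2\Delta}=s_0$. The restriction $t_P|_{2\Delta}$ is a section of $K_S(2\Delta)|_{2\Delta}$, and this bundle is trivialized by $s_0$. So I must check that the coefficient of $t_P$ with respect to $s_0$ equals $1$ to first order along $\Delta$. I will verify this pointwise in local coordinates.

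Fix $x$, a coordinate $z$ near $x$, and the frame $\nu=s_U$, so that $\nu\otimes\nu\mapsto 1$. By construction $\widehat{\eta}_P|_{\{x\}\times C}=(\eta_P)_x(\partial_z\otimes\nu)=s_{z,\nu,x}$. Formula \eqref{eta_p} gives, in the variables $(z_1,z_2)$ near $(x,x)$,
\[
\widehat{\eta}_P=\Big(-\frac{1}{(z_2-z_1)^2}+\frac{c(z_1)}{z_2-z_1}+\text{hol.}\Big)dz_1\,dz_2\otimes\nu\boxtimes\nu,
\]
where $c(z_1)=\partial_z\log l_i(z_1)$ and, up to sign, $\nu\boxtimes\nu$ is $\gamma$. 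The leading coefficient $-1$ matches $s_0$, whose local form is $dz_1dz_2/(z_1-z_2)^2$ up to the normalizing sign of \cite{BR1}. If the signs disagree, I use $-t_P$ and absorb the sign into the normalization. So $t_P$ and $s_0$ agree on $\Delta$.

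The main obstacle is the first-order term, namely showing that the residue coefficient $c(z_1)$ is compatible with $s_0$ on $2\Delta$. The canonical section $s_0$ is characterized by having symmetric local expansion $dz_1dz_2/(z_1-z_2)^2+\text{hol.}$, with no $1/(z_1-z_2)$ term. I will argue via symmetry. First I show that $\widehat{\eta}_P$ is symmetric under the involution swapping the two factors of $S$. I would deduce this from Theorem \ref{ths} and \eqref{rho_P}: $\rho_P$ takes values in ${\rm Sym}^2$, and pairing with quadrics $Q$, whose restrictions to $S$ span enough of $H^0(S,K_S\otimes\alpha_S(-2\Delta))$, forces symmetry. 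Alternatively, symmetry follows from \eqref{somma}, using that $\widetilde\eta$ is symmetric by \cite{bcfp} together with the decomposition via $\phi_y$. A symmetric section of the form $(-1/(z_2-z_1)^2+c(z_1)/(z_2-z_1)+\cdots)$ must satisfy $c(z_1)=-c(z_2)+O(z_2-z_1)$, which forces $c\equiv0$ on $\Delta$. Hence $t_P|_{2\Delta}=s_0$, and $t_P\in\mathcal Q(C)$ defines a projective structure through \eqref{ephi}.

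For the final claim that the structure depends on $\alpha$, I would compare with $\eta$ through Theorem \ref{scomposizione}, or the second $3\Delta$-coefficient via \eqref{somma}. The full statement of nonconstancy, however, follows later from Theorem \ref{debar}. Here it suffices to note that the construction uses $\alpha$ and yields the projective structure $\beta_P$ attached to $(C,\alpha)$.
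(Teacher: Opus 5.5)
Your proposal is correct. It has the same skeleton as the paper's proof: untwist $\alpha_S$ near $\Delta$ via Remark \ref{triv}, show that the restriction to $2\Delta$ is $s_0$, and conclude with \eqref{ephi}. The paper disposes of the middle step with the words ``by construction''. You actually argue it, and your argument for the vanishing of the $1/(z_2-z_1)$ term differs from the one implicit in the paper.

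The paper's implicit route is a direct local computation. By \eqref{DpH}, near $x$ one has $D'_H\bigl(\tfrac{f_i}{z-z(x)}\nu\bigr)=\bigl(-\tfrac{f_i(x)}{(z-z(x))^2}+\tfrac{f_i(x)\lambda(x)}{z-z(x)}+O(1)\bigr)dz\otimes\nu$, where $\lambda=\partial_z\log\|\nu\|_H^2$, while $D'_H h_i$ is smooth. So the residue coefficient in \eqref{eta_p} is really $\lambda(x)$; the $\log l_i$ written there should be read this way. Moreover $\lambda\equiv 0$ for the frames $\nu=s_U$ with $\phi(s_U\otimes s_U)=1$, because the $\pm1$ cocycle makes $\|s_U\|_H$ constant (Section \ref{local-system}). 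Hence in these frames $t_P=\bigl(-\tfrac{1}{(z_2-z_1)^2}+\text{hol.}\bigr)dz_1\,dz_2$ directly, with no input from later sections.

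Your symmetry argument bypasses this computation but imports more.
\begin{itemize}
\item Route (a) uses the symmetry of $\rho_P$, the pointwise identity behind Theorem \ref{prodotto}, and $I_2(K_C\otimes\alpha)\neq 0$. This last condition holds for $g\ge 7$, since $\dim I_2(K_C\otimes\alpha)\ge (g-1)(g-6)/2$. A single nonzero $Q$ suffices because $S$ is integral, and you extend from the dense locus $\mathcal R^0_g$ by continuity of $\widehat\eta_P$.
\item Route (b) uses Theorem \ref{scomposizione} and the swap-symmetry of $\widetilde\eta$ and $\eta$ from the literature. It also needs the fact that the swap preserves the $\alpha_S$-summand of $\Phi_*\mathcal{O}_{\widetilde S}$. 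This route avoids the dependence on $I_2\neq 0$.
\end{itemize}
In exchange, your approach yields the symmetry of $\widehat\eta_P$, which is useful in its own right.

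A minor correction: $t_P$ is not merely canonical up to a scalar. The $\phi$-dependence of the identification $\alpha^*\cong\alpha$, used to place $\widehat\eta_P$ in $H^0(S,L)$, cancels against that of the trivialization of $\alpha_S$. Equivalently, you can use the $\phi$-independent trivialization of $(p^*\alpha)\otimes(q^*\alpha^*)$ from Lemma \ref{lem1}. So the only remaining normalization is the sign convention for $s_0$, which the paper also passes over.
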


\begin{proof}
By Remark \ref{triv} we know that $\alpha_S\,=\, (p^*\alpha)\otimes (q^* \alpha)$ is trivial on an analytic
neighborhood of the diagonal. Let $\gamma^\phi$
denote the trivialization, corresponding to $\phi$ in \eqref{e1}, of $\alpha_S$ on an analytic neighborhood of
$\Delta\,\subset\, S$ which has the property that the restriction of $\gamma^\phi$ to the reduced diagonal
$\Delta$ coincides with $\phi$ (see Remark \ref{triv}). This trivialization $\gamma^\phi$ produces
isomorphisms $$H^0(n\Delta,\, K_{S} \otimes \alpha_S (2 \Delta)\big\vert_{n \Delta}) \ \cong
\ H^0(n\Delta,\, K_{S}(2 \Delta)\big\vert_{n \Delta})$$ for all $n \,\geq\, 1$.

The restriction map $$H^0(S,\, K_{S} \otimes \alpha_S(2 \Delta))\, \longrightarrow\,
H^0(2\Delta,\, K_{S} \otimes \alpha_S (2 \Delta )\big\vert_{2 \Delta})
\,\cong\, H^0(2\Delta,\, K_{S}(2 \Delta)\big\vert_{2 \Delta}),$$ by construction, 
sends $\widehat{\eta}_P$ to the canonical trivialisation $s_0$.
Hence, under the restriction map
$$H^0(S,\, K_{S}\otimes \alpha_S(2 \Delta))\,\longrightarrow\, H^0(3\Delta,\, K_{S}\otimes
\alpha_S (2 \Delta )\big\vert_{3 \Delta}) \,\cong\, H^0(3\Delta,\, K_{S}(2 \Delta)\big\vert_{3 \Delta}),$$ 
the section $\widehat{\eta}_P$ is mapped to an element in ${\mathcal Q}(C)$ (see \eqref{QC}), and so
$\widehat{\eta}_P\big\vert_{3 \Delta}$
gives a projective structure on $C$ via the isomorphism \eqref{ephi}. This projective structure
of course depends on $\alpha$. 
\end{proof}

\section{The second fundamental form as a multiplication map}\label{se5}

It will be shown that the second fundamental form of the Prym map $\rho_P$ can be interpreted as a 
multiplication map by the section $\widehat{\eta}_P$ in \eqref{ep}.

We identify $H^0(C,\, K_C\otimes \alpha) \otimes H^0(C,\,K_C \otimes \alpha)$ with $H^0(S,\,
K_S\otimes \alpha_S)$; using this identification, $I_2(K_C \otimes \alpha)$
in \eqref{I2} becomes a subspace of $H^0(S,\, K_S\otimes \alpha_S( - \Delta))$. 
Since elements of the space $I_2(K_C \otimes \alpha)$ are symmetric, they in fact lie inside 
$H^0(S,\,K_S\otimes \alpha_S(-2\Delta))$. So for any $Q\,\in\, I_2(K_C\otimes\alpha)$, the product section 
$Q\cdot \widehat{\eta}_P$ lies inside $H^0(S,\,K_S^{\otimes 2})\,\cong\, H^0(C,\, K_C^{\otimes 2}) \otimes 
H^0(C, \, K_C^{\otimes 2})$.

Also, note that a trivialization of $\alpha^{\otimes 2}$ produces a trivialization of
$(\alpha_S)^{\otimes 2}$.

\begin{theorem}\label{prodotto}
With the above identifications, for $[(C,\, \alpha)]\, \in \,{\mathcal R}^0_g$ with $g\,\geq\, 7$,
the second fundamental form of the Prym map $$\rho_P\ :\ I_2(K_C \otimes \alpha)\ \longrightarrow\
{\rm Sym}^2(H^0(C,\,K_C^{\otimes 2}))$$ is the restriction to $I_2(K_C \otimes \alpha)$ of the multiplication map
$$
H^0(S,\, K_S \otimes \alpha_S(-2\Delta))\ \longrightarrow\ H^0(S,\,K_S^{\otimes 2} ),\ \ \ Q\
\longmapsto\ Q \cdot \widehat{\eta}_P .
$$
\end{theorem}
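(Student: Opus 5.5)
The plan is to test both sides against Schiffer variations and compare them pointwise. Schiffer variations $\xi_x(\frac{\partial}{\partial z}(x)^{\otimes 2})$, $x\in C$, span $H^1(C,\,T_C)$. Dually, a section of $K_C^{\otimes 2}\boxtimes K_C^{\otimes 2}$ on $S$ is determined by its restrictions to the fibres $\{x\}\times C$ for $x$ in a dense open set.

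Under Serre duality, viewing $\rho_P(Q)\in {\rm Sym}^2 H^0(C,\,K_C^{\otimes 2})\subset H^0(S,\,K_S^{\otimes 2})$ as in \eqref{nrp}, we have
$$\rho_P(Q\otimes \xi_x(\tfrac{\partial}{\partial z}(x)^{\otimes 2})) \,=\, \rho_P(Q)\big\vert_{\{x\}\times C}\Bigl(\bigl(\tfrac{\partial}{\partial z}\bigr)_x^{\otimes 2}\Bigr).$$
This follows from $\xi_x$ pairing with a quadratic differential $\omega$ by evaluation at $x$ (up to a constant, by a residue computation with $\theta_x$ from \eqref{etheta}), fixed once and for all. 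So it suffices to prove, for $x$ outside the finite set of base points of $K_C\otimes\alpha$ (none when $C$ is not hyperelliptic),
$$(Q\cdot\widehat{\eta}_P)\big\vert_{\{x\}\times C} \,=\, \rho_P(Q\otimes\xi_x(\tfrac{\partial}{\partial z}(x)^{\otimes 2})).$$
We then conclude by continuity in $x$ and the density argument, normalizing constants so that the signs match.

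\textbf{Restriction of $Q\cdot\widehat{\eta}_P$.} Write $Q=\sum a_{ij}\,\omega_i\boxtimes\omega_j$ with $\omega_i=f_i\,dz\otimes\nu$ near $x$. Restrict to $\{x\}\times C$ and contract with $\frac{\partial}{\partial z}\otimes\nu$ in the first factor on both $Q$ and $\widehat{\eta}_P$. By the definition \eqref{ep} of $\widehat{\eta}_P$ through $(\eta_P)_x$ in \eqref{a2}, the factor $\widehat{\eta}_P\vert_{\{x\}\times C}$ becomes $s_{z,\nu,x}$, and $Q\vert_{\{x\}\times C}$ becomes $\sum a_{ij}f_i(z(x))\,\omega_j$. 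The two contractions together give a factor $(\frac{\partial}{\partial z})^{\otimes 2}\otimes\nu^{\otimes2}$, and $\nu^{\otimes 2}$ is a scalar via the trivialization of $\alpha^{\otimes2}$. Hence
$$(Q\cdot\widehat{\eta}_P)\big\vert_{\{x\}\times C}\Bigl(\bigl(\tfrac{\partial}{\partial z}\bigr)_x^{\otimes2}\Bigr)=s_{z,\nu,x}\cdot\sum_{i,j}a_{ij}f_i(z(x))\,\omega_j .$$
The double pole of $s_{z,\nu,x}$ at $x$ is cancelled by the double zero of $\sum a_{ij}f_i(z(x))\omega_j$ at $x$. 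This double zero holds because $Q$ vanishes to order two on $\Delta$, being symmetric and lying in $I_2$. The product is therefore a holomorphic quadratic differential.

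\textbf{Comparison with \eqref{rho_P}.} Formula \eqref{rho_P} gives exactly $-s_{z,\nu,x}\sum a_{ij}f_i(z(x))\omega_j$. So the two sides agree up to the universal sign and constant coming from Serre duality and the normalization of $\rho_P=-2\HG$; these are absorbed into the convention fixed above. The main obstacle is making the identification between $\rho_P(Q)$ as an element of ${\rm Sym}^2H^0(K_C^{\otimes2})\subset H^0(S,\,K_S^{\otimes2})$ and its values on Schiffer variations precise and consistent with the convention in \eqref{nrp}. In particular, one must check that restricting to $\{x\}\times C$ corresponds to pairing the first factor with $\xi_x$, and that $\widehat{\eta}_P$ restricted to that fibre is $(\eta_P)_x$ as claimed, the relation in \eqref{ep} being stated only off the diagonal and extended by continuity. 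I would handle the hyperelliptic exceptional points by holomorphicity in $x$, using that $\eta_P$ is holomorphic by \eqref{etap}.
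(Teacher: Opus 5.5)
Your proposal is correct and is essentially the paper's proof. The paper also derives the theorem directly from \eqref{rho_P} and the definition of $\widehat{\eta}_P$ through $(\eta_P)_x$, evaluating on pairs $\xi_u\odot\xi_v$ of Schiffer variations at distinct points $x\neq y$; this is the same as your restriction to $\{x\}\times C$ followed by evaluation at $y$. The paper silently absorbs the sign and Serre-duality constant that you flag, so your bookkeeping is consistent with it and slightly more explicit.
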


\begin{proof}
This follows from formula \eqref{rho_P}. If $x\,\neq\, y$, for
$$ (u,\,v )\ \in\ (T_C \otimes \alpha)_x \oplus (T_C \otimes \alpha)_y$$
recalling that the Schiffer variations in \eqref{Schiffer} are defined on 
$T_C^{\otimes 2}\, \cong\, (T_C \otimes \alpha) ^{\otimes 2}$, we have
\begin{gather*}
\rho_P(Q) (\xi_{u}\odot \xi_{v})\ =\ Q(u, v) (\eta_P)_x(u)(v)\\
=\ Q(u,v ) \widehat{\eta}_P(u, v)\ =\ (Q\cdot \widehat{\eta}_P) ( u,v )\ =\
(Q\cdot \widehat{\eta}_P) \, ( \xi_{u} \odot \xi_{v}).
\end{gather*}
This completes the proof.
\end{proof}

\begin{corollary}
\label{inj}
For any $[(C,\, \alpha)]\, \in \,{\mathcal R}^0_g$ with $g\,\geq\, 7$,
the second fundamental form of the Prym map $\rho_P\ :\ I_2(K_C \otimes \alpha)\ \longrightarrow\
{\rm Sym}^2(H^0(C,\,K_C^{\otimes 2}))$ 
is injective. 
\end{corollary}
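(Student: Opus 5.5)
By Theorem \ref{prodotto}, $\rho_P$ is the restriction to $I_2(K_C\otimes\alpha)$ of the map $Q\mapsto Q\cdot\widehat{\eta}_P$. So injectivity of $\rho_P$ will follow once I show that multiplication by $\widehat{\eta}_P$ is injective on $H^0(S,\,K_S\otimes\alpha_S(-2\Delta))$. Multiplication by a fixed section of a line bundle on an irreducible variety kills nothing unless that section vanishes identically, so everything reduces to checking $\widehat{\eta}_P\neq 0$.

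Concretely, I will regard $Q$ and $\widehat{\eta}_P$ as holomorphic sections of line bundles on $S$ (the latter meromorphic with poles along $\Delta$). On the Zariski dense open set $S\setminus\Delta$ their product vanishes only if one factor vanishes identically there, because $S=C\times C$ is connected and the ring of holomorphic functions on a connected open set has no zero divisors. Hence $Q\cdot\widehat{\eta}_P=0$ forces either $Q=0$ or $\widehat{\eta}_P\equiv 0$ on $S\setminus\Delta$, and then on all of $S$. I also need the identification $I_2(K_C\otimes\alpha)\subset H^0(C,K_C\otimes\alpha)^{\otimes 2}\cong H^0(S,K_S\otimes\alpha_S)$ to be injective, which is clear since it is the Künneth isomorphism.

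The only genuine step is $\widehat{\eta}_P\neq 0$, and it follows from the construction. For each $x$, $\widehat{\eta}_P\vert_{\{x\}\times C}$ is, up to the nonzero factor from $(K_C\otimes\alpha^*)_x$, the section $\s{z,\nu,x}$. By \eqref{eta_p}, this section has a genuine double pole $-\frac{1}{(z-z(x))^2}$ at $x$, so it is nonzero. Equivalently, by Theorem \ref{betaP}, $\widehat{\eta}_P\vert_{2\Delta}$ is the canonical nowhere-vanishing trivialization $s_0$. Either argument shows $\widehat{\eta}_P\not\equiv 0$, which completes the proof.

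The only subtlety I anticipate is that $\s{z,\nu,x}$ was defined only for $x$ not a base point of $K_C\otimes\alpha$. This causes no difficulty: it suffices to exhibit a single point $x$ where $\widehat{\eta}_P\vert_{\{x\}\times C}\neq 0$, and by Riemann–Roch $h^0(K_C\otimes\alpha)=g-1\geq 1$, so such non-base points exist.
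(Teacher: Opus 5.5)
Your proposal is correct and is the same argument as the paper's: by Theorem~\ref{prodotto}, $\rho_P(Q)=Q\cdot\widehat{\eta}_P$, and this is nonzero for $Q\neq 0$ because $\widehat{\eta}_P\neq 0$ on the irreducible surface $S$. The paper states this in one line, and you make explicit the two facts it leaves implicit: the absence of zero divisors on $S$, and the nonvanishing of $\widehat{\eta}_P$ (from the double pole in \eqref{eta_p}, or from $\widehat{\eta}_P\vert_{2\Delta}=s_0$).
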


\begin{proof}
By Theorem \ref{prodotto}, for any $Q \,\in\, I_2(K_C \otimes \alpha)\, \subset\,
H^0(S,\, K_S \otimes \alpha_S(-2\Delta))$, we have $\rho_P(Q)\, =\, Q \cdot \widehat{\eta}_P
\,\neq\, 0$, if $Q \,\neq\, 0$.
\end{proof}

Now it will be shown that, with the appropriate identifications, the section 
$\widehat{\eta}_P$ can be seen as the difference of the sections $\widetilde{\eta}$ and 
$\eta$, that govern the second fundamental forms of the Torelli maps for $\widetilde{C}$ and 
$C$ respectively.

Let
\begin{equation}\label{i2k}
I_2(K_{\widetilde{C}})\ \subset\ {\rm Sym}^2(H^0(\widetilde{C},\, K_{\widetilde C}^{\otimes 2}))
\end{equation}
be the kernel of the multiplication map ${\rm Sym}^2 (H^0(\widetilde{C},\, K_{\widetilde{C}})) \, \longrightarrow\,
H^0(\widetilde{C},\, K^{\otimes 2}_{\widetilde{C}})$.
Recall that the second fundamental form of the Torelli map for the curve $\widetilde{C}$ is the Hodge 
Gaussian map $\widetilde{\rho}\,:\, I_2(K_{\widetilde{C}})\, \longrightarrow\,
{\rm Sym}^2(H^0(\widetilde{C},\, K_{\widetilde C}^{\otimes 2}))$ given by the restriction to
$I_2(K_{\widetilde{C}})$ of the multiplication map
\begin{equation}\label{sff}
H^0(\widetilde{S},\, K_{\widetilde S} (-2\widetilde{\Delta}))\ \longrightarrow\ H^0(\widetilde{S},\,
K_{\widetilde S}^{\otimes 2} )\ \ \ Q\, \longmapsto\ Q \cdot\widetilde{\eta},
\end{equation}
where $\widetilde{\eta} \,\in\, H^0(\widetilde{S},\, K_{\widetilde S} (2\widetilde{\Delta}))$
(see \cite[Theorem 3.7]{cfg}).

Notice that 
$$H^0(\widetilde{C},\, K_{\widetilde C})\ =\ H^0(C,\, \pi_* K_{\widetilde C})\ =\ H^0(C,\,
\pi_* \pi^*K_{C})
$$
$$
=\ H^0(C,\, K_C\otimes\pi_*{\mathcal O}_{\widetilde C})\ =\ 
H^0(C,\, K_C\otimes ({\mathcal O}_C\oplus \alpha))\ =\ 
H^0(C,\, K_C) \oplus H^0(C,\, K_C \otimes \alpha)$$ and
$H^0(C, \,K_C) \,\cong \,H^0(\widetilde{C},\, K_{\widetilde C})^{+}$, while $H^0(C,\,
K_C \otimes \alpha) \,\cong\, H^0(\widetilde{C},\, K_{\widetilde C})^{-}$, where $H^0(\widetilde{C},\,
K_{\widetilde C})^{+}$ (respectively, $H^0(\widetilde{C},\, K_{\widetilde C})^{-}$) denotes the
invariant (respectively, anti-invariant) subspace of $H^0(\widetilde{C},\, K_{\widetilde C})$ under
the action of the nontrivial deck transformation $\sigma$ of the degree two covering map
$\pi\, :\, \widetilde{C}\, \longrightarrow\, C$. So we have
\begin{equation}\label{wh}
({\rm Sym}^2 (H^0(\widetilde{C},\, K_{\widetilde C})))^+\ \cong\ {\rm Sym}^2 (H^0(C,\, K_C)) \oplus
{\rm Sym}^2 (H^0(C,\, K_C \otimes \alpha)),
\end{equation}
$$
\text{and }\ \ \,H^0(\widetilde{C},\, K_{\widetilde C}^{\otimes 2})^+\ \cong\ H^0(C,\, K_C^{\otimes 2}).$$
The restriction of the multiplication map to the invariant part of the symmetric product 
\begin{equation}\label{es}
({\rm Sym}^2 (H^0(\widetilde{C},\, K_{\widetilde C})))^+\ \longrightarrow\ H^0(\widetilde{C},\,
K_{\widetilde C}^{\otimes 2})^+
\end{equation}
splits --- using \eqref{wh} --- as a sum of the two multiplication maps 
$$
{\rm Sym}^2 (H^0(C,\, K_C))\, \longrightarrow\, H^0(C,\, K_C^{\otimes 2}),\ \
{\rm Sym}^2 (H^0(C,\, K_C \otimes \alpha)) \, \longrightarrow\, H^0(C,\, K_C^{\otimes 2}).
$$
Let $I_2(K_{\widetilde C})^+\,\subset\, I_2(K_{\widetilde{C}})$ be the invariant part
for the action of $\sigma$ on $I_2(K_{\widetilde{C}})$ (see \eqref{i2k}). Using the above
decomposition of the map in \eqref{es} it follows that
$$I_2(K_{\widetilde C})^+\ \cong\ I_2(K_C) \oplus I_2(K_C \otimes \alpha),$$
where $I_2(K_C)$ is the kernel of the multiplication map
${\rm Sym}^2 (H^0(C,\, K_C)) \, \longrightarrow\, H^0(C,\, K^{\otimes 2}_C)$.

Observe that the restriction of $\widetilde{\rho}$ to $I_2(K_{\widetilde C})^+ $ is a map 
$$\widetilde{\rho}\big\vert_{I_2(K_{\widetilde C})^+}\ :\ I_2(K_{\widetilde C})^+\ \longrightarrow\
({\rm Sym}^2 (H^0(\widetilde{C},\, K_{\widetilde C}^{\otimes 2})))^+
$$
$$
\cong\ {\rm Sym}^2 (H^0(C,\, K_C^{\otimes 2})) \oplus {\rm Sym}^2 (H^0(C,\, K_C^{\otimes 2} \otimes \alpha)).$$ 
In \cite{cfgalois} it has been shown that 
$$p_1 \circ \widetilde{\rho}\big\vert_{I_2(K_C \otimes \alpha)}\ =\ \rho_P,$$
where $p_1\,:\, {\rm Sym}^2 (H^0(C,\, K_C^{\otimes 2})) \oplus {\rm Sym}^2 (H^0(C,\, K_C^{\otimes 2} \otimes \alpha))
\,\longrightarrow\, {\rm Sym}^2 (H^0(C, K_C^{\otimes 2}))$ is the projection onto the first direct summand.
Similarly, one can show that 
$$ p_1 \circ \widetilde{\rho}\big\vert_{I_2(K_C )}\ =\ \rho,$$
where $$\rho\ :\ I_2(K_C)\ \longrightarrow\ {\rm Sym}^2 (H^0(C,\, K_C^{\otimes 2}))$$
is the second fundamental form of the Torelli map for the curve $C$ (see \eqref{sff}),
which coincides with the multiplication by $\eta \,\in\, H^0(S,\, K_S(2\Delta))$ (\cite[Theorem 3.7]{cfg}). 

Recall that we have $\Phi^*({\mathcal O}_{S}({\Delta}))\, =\, {\mathcal O}_{\widetilde S}(\widetilde{\Delta}+
\Gamma)\, \supset\, {\mathcal O}_{\widetilde S}(\widetilde{\Delta})$, where $\Gamma$ is the graph of
$\sigma$. Moreover, we have $K_{\widetilde S}\, =\, \Phi^*(K_{S})$, and hence 
$$K_{\widetilde S}(2\widetilde{\Delta})\ \subset\ \Phi^*(K_{S}({2\Delta})),
$$
$$
\Phi_*K_{\widetilde S}(2\widetilde{\Delta})\ \subset\ \Phi_*\Phi^*(K_{S}({2\Delta}))
\ =\ K_S(2 \Delta) \otimes \Phi_*{\mathcal O}_{\widetilde S}. $$
Since $\Phi\,=\,(\pi,\, \pi)$, it follows that 
$$\Phi_*{\mathcal O}_{\widetilde S}\ \cong\ {\mathcal O}_{S}\oplus \alpha_S \oplus ({\mathcal O}_C
\boxtimes \alpha) \oplus (\alpha \boxtimes {\mathcal O}_C).$$
So there is a natural inclusion map
\begin{equation}\label{ek}
H^0(\widetilde{S},\, K_{\widetilde S} (2\widetilde{\Delta}))\, =\, H^0(S,\,
\Phi_*(K_{\widetilde S} (2\widetilde{\Delta}))\, \subset\, H^0(S,\, K_S(2 \Delta) )\oplus
H^0(S,\, K_S \otimes \alpha_S(2 \Delta) )
\end{equation}
$$\oplus H^0(S,\, K_C\boxtimes( K_C \otimes \alpha)(2 \Delta)) \oplus
H^0(S,\, (K_C\otimes \alpha) \boxtimes K_C(2 \Delta)).$$

\begin{theorem}\label{scomposizione}
Under the inclusion map in \eqref{ek},
$$\widetilde{\eta}\ \in\ H^0(S,\, K_S(2 \Delta) )\oplus H^0(S,\, K_S \otimes \alpha_S(2 \Delta) ),$$
and $\widetilde{\eta} \,=\, \eta + \widehat{\eta}_P$ (see \eqref{ep} for $\widehat{\eta}_P$). 
\end{theorem}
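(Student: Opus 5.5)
The natural route is the pointwise identity \eqref{somma} established in the proof that $\eta_P$ varies holomorphically:
\begin{equation*}
\overline{\epsilon}_y \circ \widetilde{\eta}_y\ =\ (\eta_x,\, (\eta_P)_x)\circ \phi_y ,
\end{equation*}
valid for all $y\in\widetilde C$ (or all but finitely many $y$ if $C$ is hyperelliptic), with $x=\pi(y)$. The plan is to translate this fiberwise equality of sections of vector bundles on $\widetilde C$ into an equality of global sections on $S$, by means of the isomorphism $H^0(C,E)\cong H^0(S,L)$ and its untwisted analogue. Throughout, $\widetilde\eta$ is viewed on $S$ via \eqref{ek}.

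First, unwind \eqref{ek} fiberwise. Restricting $\Phi_*$ of a section of $K_{\widetilde S}(2\widetilde\Delta)$ to $\{x\}\times C$ amounts to restricting $\widetilde\eta$ to $\widetilde p^{-1}(\{y,\sigma(y)\})$, then pushing forward along $\pi$ in the second factor. The decomposition $\Phi_*\mathcal O_{\widetilde S}\cong \mathcal O_S\oplus\alpha_S\oplus(\mathcal O_C\boxtimes\alpha)\oplus(\alpha\boxtimes\mathcal O_C)$ is the tensor product of the decomposition \eqref{dis} in each factor. Decomposing in the first factor into $\sigma$-invariant and anti-invariant parts corresponds to decomposing $T_{\widetilde C,y}$, via $\phi_y$ in \eqref{phi}, into $(T_C)_x\oplus(T_C\otimes\alpha)_x$. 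Decomposing in the second factor is exactly $\epsilon_y$ from \eqref{ee}, landing in $H^0(K_C(2x))\oplus H^0((K_C\otimes\alpha)(2x))$, as in Proposition \ref{epsilon}.

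With these identifications, \eqref{somma} says three things. The component of $\widetilde\eta$ with $\mathcal O$ in the first factor and $\mathcal O$ in the second is $\eta_x$. The component with $\alpha$ in the first factor and $\alpha$ in the second is $(\eta_P)_x$. The two mixed components vanish. Here I must check that $\phi_y$ is the diagonal map: a vector $(\partial_z, \partial_z\otimes y)$ produces no cross terms. This holds because $(\eta_x,(\eta_P)_x)$ acts componentwise, with $\eta_x$ valued in $V_x$ and $(\eta_P)_x$ valued in $(V_P)_x$.

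Since sections of $L$ (respectively $K_S(2\Delta)$) are determined by their restrictions to the fibers $\{x\}\times C$ for $x$ in a dense set, the identity for all but finitely many $x$ gives $\widetilde\eta=\eta+\widehat\eta_P$ in the decomposition \eqref{ek}. In particular the mixed components are zero, which is the first assertion.

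The main obstacle is bookkeeping rather than a conceptual step: I must ensure that the identification of $H^0(\widetilde S, K_{\widetilde S}(2\widetilde\Delta))$ inside \eqref{ek} is compatible with the maps $\overline\epsilon_y$ and $\phi_y$. Concretely, the invariant/anti-invariant splitting on the first factor is dual to $\phi_y$, which introduces factors such as evaluating at $y$ versus $\sigma(y)$. I would first verify this compatibility in local coordinates near $y$ and near $\sigma(y)$, using the trivializing section $s_U$ of Lemma \ref{lem1}, so that $\partial_z\otimes y$ corresponds to $\partial_z\otimes s_U$. I would then confirm that no factor of $2$ or sign is introduced, checking this by comparing leading terms: both $\widetilde\eta$ and $\eta+\widehat\eta_P$ restrict to the canonical $s_0$ on $2\Delta$.
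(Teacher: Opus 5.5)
There is a gap at the central step, where you read off all four components of $\widetilde{\eta}$ from \eqref{somma} at a single point $y$.

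Write $\widetilde{\eta}=\eta_1+\eta_2+\eta_3+\eta_4$ according to \eqref{ek}, with $\eta_3$ of type $(\mathcal{O},\alpha)$ and $\eta_4$ of type $(\alpha,\mathcal{O})$. For $v\in T_y\widetilde{C}$ let $v_x\in T_xC$ be its image, so that $\phi_y(v)=(v_x,\,v_x\otimes y)$. Restricting to $\{y\}\times\widetilde{C}$ and splitting in the second factor gives
\begin{equation*}
\overline{\epsilon}_y\bigl(\widetilde{\eta}_y(v)\bigr)\ =\ \bigl(\eta_1(v_x)+\eta_4(v_x\otimes y),\ \ \eta_3(v_x)+\eta_2(v_x\otimes y)\bigr)\ \in\ V_x\oplus (V_P)_x .
\end{equation*}
This is because $\eta_1$ and $\eta_4$ both have $\mathcal{O}$ in the second factor, and $\eta_2,\eta_3$ both have $\alpha$ there. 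So \eqref{somma} at $y$ yields only $\eta_1+\eta_4=\eta_x$ and $\eta_3+\eta_2=(\eta_P)_x$. The cross terms are there, and the fact that $(\eta_x,(\eta_P)_x)$ acts componentwise does not remove them.

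The underlying point is that $\phi_y$ embeds the line $T_y\widetilde{C}$ diagonally into $(T_C)_x\oplus(T_C\otimes\alpha)_x\cong T_y\widetilde{C}\oplus T_{\sigma(y)}\widetilde{C}$; it does not decompose it. The vanishing of $\eta_3$ and $\eta_4$, which is the first assertion of the theorem, therefore needs one more input.

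The paper supplies it with the invariance of $\widetilde{\eta}$ under $k=(\sigma,\sigma)$ \cite[Lemma 3.6]{cfg}. Since $k^*$ acts by $-1$ on the two mixed summands, $\eta_3=\eta_4=0$, and then \eqref{somma} gives $\eta_1=\eta$ and $\eta_2=\widehat{\eta}_P$.

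Within your own approach you can instead apply \eqref{somma} at $\sigma(y)$ to $d\sigma(v)$, which is possible for all but finitely many $x$. Since $\sigma(y)=-y$ in $\alpha_x$, we have $\phi_{\sigma(y)}(d\sigma(v))=(v_x,-v_x\otimes y)$. This gives $\eta_1-\eta_4=\eta_x$ and $\eta_3-\eta_2=-(\eta_P)_x$. Adding and subtracting these from the equations at $y$ separates all four components, and it recovers the $k$-invariance rather than importing it. Your first paragraph notes that the fibre over $x$ involves both $y$ and $\sigma(y)$, but your argument never actually uses $\sigma(y)$.

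Your normalization check is also not right as stated. Along $2\widetilde{\Delta}$, both $\Phi^*\eta$ and $\Phi^*\widehat{\eta}_P$ restrict to $s_0$, because the trivialization of $\Phi^*\alpha_S$ by the tautological section is $+1$ near $\widetilde{\Delta}$. Along $\Gamma$ their leading terms are opposite, and $\widetilde{\eta}$ is regular there.

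Suppose \eqref{ek} is read with the natural splitting of \eqref{dis} (pullback of functions, and division by the tautological section), i.e.\ $\widetilde{\eta}=\sum_i\Phi^*\eta_i$. Then comparing leading terms forces $\widetilde{\eta}=\tfrac12\Phi^*(\eta+\widehat{\eta}_P)$. The coefficient $1$ in the statement corresponds to a rescaled splitting, for instance one trace-normalized in the second factor; that is what makes \eqref{somma} hold with coefficient $1$. So the leading-term comparison tells you which normalization you must fix; it does not confirm that no factor of $2$ appears.
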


\begin{proof}
Using the inclusion map in \eqref{ek} we can write $\widetilde{\eta}\, =\, \eta_1 + \eta_2 + \eta_3
+ \eta_4$, where $\eta_i$ are the components of $\eta$ in the four direct summands.
Recall that $\widetilde{\eta}$ is invariant under the automorphism $k\,:=\, (\sigma,\, \sigma)$ of
$\widetilde{S}$ (see \cite[Lemma 3.6]{cfg}). Looking at the action of $k$ on the four summands
it follows immediately see that $k^*(\eta_1) \,=\, \eta_1$,\, $k^*(\eta_2)\, =\, \eta_2$,\,
$k^*(\eta_3) \,=\,
-\eta_3$,\, $k^*(\eta_4) \,=\, -\eta_4$, and since $k^*(\widetilde{\eta}) \,=\,
\widetilde{\eta}$, it is deduced that
$\eta_3 \,=\, 0$ and $\eta_4 \,=\, 0$. So $$\widetilde{\eta}\ =\ \eta_1 + \eta_2 \
\in\ H^0(S,\, K_S(2 \Delta) )\oplus H^0(S,\, K_S \otimes \alpha_S(2 \Delta)),$$
and from \eqref{somma} it follows
that $\eta_1 \,=\, \eta$, $\eta_2\, =\,\widehat{\eta}_P$. Hence 
\begin{equation}
\label{decomposizione}
\widetilde{\eta}\ =\ \eta + \widehat{\eta}_P.
\end{equation}
This completes the proof.
\end{proof}

\section{Relative version of the bidifferential}\label{se6}

Consider a holomorphic family of \'etale double covers. Such a family consists
of a holomorphic family $f\,:\, {\mathcal C}\,\longrightarrow\, B$ of compact Riemann surfaces of genus $g$,
parametrized by a complex manifold $B$, together with a holomorphic family of \'etale double covers
$\widetilde{\mathcal C}\, \longrightarrow\, \mathcal C$. In other words, $\widetilde{f}\,:\,
\widetilde{\mathcal C}\, \longrightarrow\, B$ is a holomorphic family of compact Riemann surfaces of genus
$2g-1$ equipped with an \'etale covering map $\varpi\, :\, \widetilde{\mathcal C}\, \longrightarrow\, \mathcal C$ 
of degree two such that $f\circ\varpi\,=\, \widetilde{f}$.

For any $b\, \in\, B$, the fiber $f^{-1}(b)$ (respectively, $\widetilde{f}^{-1}(b)$) will be denoted
by $C_b$ (respectively, $\widetilde{C}_b$). The direct image $\varpi_*{\mathcal O}_{\widetilde{\mathcal C}}
\,\longrightarrow\, \mathcal C$ decomposes as ${\mathcal O}_{\mathcal C}\oplus\alpha_{{\mathcal C}/B}$, where
$\alpha_{{\mathcal C}/B}$ is a nontrivial holomorphic line bundle of order two on $\mathcal C$. For any $b\, \in\,
B$, the holomorphic line bundle $\alpha_{{\mathcal C}/B}\big\vert_{C_b}\, \longrightarrow\, C_b$ will be
denoted by $\alpha_b$. Note that $\alpha_b$ is a nontrivial holomorphic line bundle of order two on $C_b$.

Define the fiber product
\begin{equation}\label{ecs}
{\mathcal S}\ :=\ {\mathcal C} \times_B {\mathcal C},
\end{equation}
and for $j\,=\, 1,\,2$, denote by $\widetilde{p_j}\,:\, {\mathcal S}\, \longrightarrow\, {\mathcal C}$
the natural projection to the $j$-th factor. Also, for $j\,=\, 1,\,2$, set
$F \, =\, f \circ \widetilde{p_j}\, :\, {\mathcal S}\, \longrightarrow\, B$, and denote
by $\Delta_B \, \subset\, {\mathcal S}$ the reduced relative diagonal. 

Consider the relative version of the map $\psi_P$ of \eqref{cdetaP}, that will continue to
be denoted by $\psi_P$: 
\begin{equation}\label{psirel}
0 \ \longrightarrow\ T_{{\mathcal C}/B} \otimes \alpha_{{\mathcal C}/B}\
\xrightarrow{\,\,\,\psi_P\,\,\,}\ R^1\widetilde{p}_{1*} ({\widetilde{p}_2}^* \alpha_{{\mathcal C}/B}).
\end{equation}
Consider the variation of Hodge structure for the family $\widetilde{p}_1\,:\, {\mathcal S}
\,\longrightarrow\, {\mathcal C}$, 
\begin{equation}
\label{vhs}
0\, \longrightarrow \,\widetilde{p}_{1*}((\widetilde{p}_2^*(K_{{\mathcal C}/B} \otimes \alpha_{{\mathcal C}/B}))
\,\longrightarrow\, R^1\widetilde{p}_{1*} (\widetilde{p}_2^* {\mathbb C}_{\alpha_{{\mathcal C}/B}})
\otimes {\mathcal O}_{\mathcal C}\,\longrightarrow\, R^1\widetilde{p}_{1*} (\widetilde{p}_2^*
\alpha_{{\mathcal C}/B}) \,\longrightarrow\, 0.
\end{equation}
Let 
\begin{equation}
\label{reliP}
i_P\ :\ R^1\widetilde{p}_{1*} (\widetilde{p}_2^* \alpha_{{\mathcal C}/B})\ \longrightarrow\
R^1\widetilde{p}_{1*} (\widetilde{p}_2^* {\mathbb C}_{\alpha_{{\mathcal C}/B}}) \otimes {\mathcal O}_{\mathcal C}
\end{equation}
be the ${\mathcal C}^{\infty}$ splitting given by the Hodge decomposition. 

Denote by ${\mathcal H}_{{\mathcal C}/B}\ \subset \ \widetilde{p}_{1*}((\widetilde{p}_2^*(K_{{\mathcal C}/B}
\otimes \alpha_{{\mathcal C}/B})(2 \Delta_B))$ the relative version of the vector bundle ${\mathcal H}$
of \eqref{cdetaP}, and by 
$$\eta_P\ :\ T_{{\mathcal C}/B} \otimes \alpha_{{\mathcal C}/B}\ \longrightarrow\
{\mathcal H}_{{\mathcal C}/B}\ \subset\ \widetilde{p}_{1*}((\widetilde{p}_2^*(K_{{\mathcal C}/B} \otimes
\alpha_{{\mathcal C}/B})(2 \Delta_B))$$
the relative version of $\eta_P$ (the same notation is used for it). 

We also have a relative version of the map $\chi_P$ in \eqref{j_P}
$$\chi_P\ :\ {\mathcal H}_{{\mathcal C}/B}\ \longrightarrow\ R^1\widetilde{p}_{1*} (\widetilde{p}_2^*
{\mathbb C}_{\alpha_{{\mathcal C}/B}}) \otimes {\mathcal O}_{\mathcal C},$$
and by \eqref{cdetaP} the following commutative diagram is obtained
\begin{equation}\label{cdetaPrel}
\begin{tikzcd}[column sep=large] 
{} &R^1 \widetilde{p}_{1*} (\widetilde{p}_2^* \alpha_{{\mathcal C}/B})\arrow{rd}{i_P} & \\
T_{{\mathcal C}/B} \otimes \alpha_{{\mathcal C}/B} \arrow{ru}{\psi_P} \arrow{rd}[swap]{-{\eta_P}} & &
R^1\widetilde{p}_{1*} (\widetilde{p}_2^* {\mathbb C}_{\alpha_{{\mathcal C}/B}}) \otimes {\mathcal O}_{\mathcal C} \\
& {\mathcal H}_{{\mathcal C}/B}\arrow{ru}{\chi_P} &
\end{tikzcd}
\end{equation}

By flat base change we have 
\begin{equation}
\label{fbc1}
R^1 \widetilde{p}_{1*} (\widetilde{p}_2^* {\mathbb C}_{\alpha_{{\mathcal C}/B}})\ \cong\
 f^*(R^1f_*{\mathbb C}_{\alpha_{{\mathcal C}/B}}),
\end{equation}
\begin{equation}
\label{fbc2}
R^1 \widetilde{p}_{1*} (\widetilde{p}_2^* {\alpha}_{{\mathcal C}/B})\ \cong\
f^*(R^1 f_* {\alpha}_{{\mathcal C}/B}),
\end{equation}
\begin{equation}
\label{fbc3}
\widetilde{p}_{1*}((\widetilde{p}_2^*(K_{{\mathcal C}/B} \otimes \alpha_{{\mathcal C}/B}))\ \cong\
f^*( f_* (K_{{\mathcal C}/B} \otimes \alpha_{{\mathcal C}/B}))\ =\ f^*({\mathcal F}^1),
\end{equation}
where ${\mathcal F}^1\ =\ f_* (K_{{\mathcal C}/B} \otimes \alpha_{{\mathcal C}/B})$ is the Hodge bundle that
fits in the exact sequence given by the variation of Hodge structure for $f$: 
\begin{equation}
\label{vhsf}
0\ \longrightarrow\ {\mathcal F}^1\ \longrightarrow\ R^1f_*{\mathbb C}_{\alpha_{{\mathcal C}/B}} \otimes
{\mathcal O}_{\mathcal C}\ \longrightarrow\ R^1 f_* {\alpha}_{{\mathcal C}/B}\ \longrightarrow\ 0;
\end{equation}
note that \eqref{vhs} is the pullback of \eqref{vhsf}. 
Considering \eqref{vhsf} and its pullback exact sequence via the map $\psi_P$, we get the commutative diagram 
\begin{equation}\label{hodge1}
\begin{tikzcd}
0\arrow{r}& f^* {\mathcal F}^1 \arrow{r} & f^*(R^1f_*{\mathbb C}_{{\alpha}_{{\mathcal C}/B}}) \otimes {\mathcal O}_{\mathcal C} \arrow{r}& f^*(R^1f_*{\alpha_{{\mathcal C}/B}}) \arrow{r} &0 \\
0\arrow{r}& f^* {\mathcal F}^1 \arrow{u}{=} \arrow{r}& {\mathcal H}_{{\mathcal C}/B} \arrow[hook]{u}\arrow{r} & T_{{\mathcal C}/B} \otimes \alpha_{{\mathcal C}/B} \arrow{u}{\psi_P}\arrow{r} \arrow{lu} {i_P\circ \psi_P= -\chi_P \circ \eta_P}& 0\\
\end{tikzcd}
\end{equation}

We have the following theorem.

\begin{theorem}
\label{c-infty-split}
The image of the above ${\mathcal C}^{\infty}$ homomorphism 
$$ i_P \circ \psi_P \,=\, i_P\circ \psi_P: T_{{\mathcal C}/B} \otimes \alpha_{{\mathcal C}/B}
\,\longrightarrow\, f^*(R^1f_*{\mathbb C}_{\alpha_{{\mathcal C}/B}}) \otimes
{\mathcal O}_{\mathcal C} \,\cong\, R^1\widetilde{p}_{1*}
(\widetilde{p}_2^* {\mathbb C}_{\alpha_{{\mathcal C}/B}})\otimes {\mathcal O}_{\mathcal C} $$
lies in the subsheaf ${\mathcal H}_{{\mathcal C}/B}\, \subset\,R^1\widetilde{p}_{1*}
(\widetilde{p}_2^* {\mathbb C}_{\alpha_{{\mathcal C}/B}})\otimes {\mathcal O}_{\mathcal C}$,
and it gives a ${\mathcal C}^{\infty}$ splitting of the bottom exact sequence of
the diagram in \eqref{hodge1}. 
\end{theorem}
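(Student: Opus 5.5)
The plan is to derive everything from the relative commutative diagram \eqref{cdetaPrel}, which is the family version of \eqref{cdetaP}. It gives the identity
\[
i_P\circ\psi_P \;=\; -\,\chi_P\circ\eta_P
\]
of ${\mathcal C}^{\infty}$ homomorphisms from $T_{{\mathcal C}/B}\otimes\alpha_{{\mathcal C}/B}$ to $f^*(R^1f_*{\mathbb C}_{\alpha_{{\mathcal C}/B}})\otimes{\mathcal O}_{\mathcal C}$.

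First I identify ${\mathcal H}_{{\mathcal C}/B}$ with a subsheaf of the flat bundle via $\chi_P$. By Remark \ref{rem} (applied fiberwise, with ${\mathcal H}_x = H^0(C_b,K\otimes\alpha_b)\oplus\langle s_{z,\nu,x}\rangle$), the map $\chi_P$ is injective when $C_b$ is not hyperelliptic, which is the situation on the locus considered. This is what makes the vertical arrow in \eqref{hodge1} an inclusion. Since $\eta_P$ takes values in ${\mathcal H}_{{\mathcal C}/B}$, the identity above shows that the image of $i_P\circ\psi_P$ lies in $\chi_P({\mathcal H}_{{\mathcal C}/B})$. This proves the first assertion. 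Moreover $-\eta_P$ is the corresponding lift, and it is in fact holomorphic.

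Next I check the splitting property: the composite of $-\eta_P$ with the projection ${\mathcal H}_{{\mathcal C}/B}\to T_{{\mathcal C}/B}\otimes\alpha_{{\mathcal C}/B}$ in the bottom row of \eqref{hodge1} must be the identity. The bottom row is the pullback of \eqref{vhsf} along $\psi_P$. So this projection is characterised by the requirement that composing with $\psi_P$ equals the map ${\mathcal H}_{{\mathcal C}/B}\hookrightarrow f^*R^1f_*{\mathbb C}_\alpha\otimes{\mathcal O}\to f^*R^1f_*\alpha$. The Hodge projection is a left inverse of $i_P$, so applying it to $i_P\circ\psi_P=-\chi_P\circ\eta_P$ gives
\[
\psi_P \;=\; \mathrm{pr}\circ(-\chi_P\circ\eta_P).
\]
By injectivity of $\psi_P$ (see \eqref{psirel}), the projection of $-\eta_P(u)$ is $u$. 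Concretely, at a point $x$ the element $-s_{z,\nu,x}$ has class $[\gamma_x]$, as computed in the proof of Proposition \ref{diagx}, and this class maps to $\delta(\partial_z\otimes\nu)$. Therefore $i_P\circ\psi_P$, equivalently $-\eta_P$, is a section of the bottom sequence. Its ${\mathcal C}^\infty$ nature as a map into the flat bundle comes from $i_P$ being the ${\mathcal C}^\infty$ Hodge splitting.

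The main obstacle is bookkeeping rather than new mathematics. I must make sure the relative versions of $\chi_P$, $\eta_P$ and the diagram \eqref{cdetaPrel} are legitimately obtained from the pointwise statements, using the flat base change identifications \eqref{fbc1}--\eqref{fbc3}. I must also handle hyperelliptic fibers, where $\chi_P$ can fail to be injective at finitely many points. For the latter I would argue by continuity, as was done for \eqref{somma}: the identity $i_P\circ\psi_P=-\chi_P\circ\eta_P$ holds on a dense open set and hence everywhere, and there the image statement is read through $\eta_P$.
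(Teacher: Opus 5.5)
Your argument is correct and takes the same route as the paper, which simply reads both assertions off the commutative diagram \eqref{cdetaPrel}; you have spelled out the identification of the pulled-back extension with $\chi_P({\mathcal H}_{{\mathcal C}/B})$ and checked that $-\eta_P$ splits the bottom row of \eqref{hodge1}. One correction to a side remark: in the relative setting $-\eta_P$ is holomorphic only along the fibres $C_b$, not on ${\mathcal C}$, because $\chi_P$ and $\psi_P$ are holomorphic, so holomorphy of $\eta_P$ would force $i_P\circ\psi_P$ to be holomorphic, contradicting Theorem \ref{debar}, which gives $\overline{\partial}\beta_P\neq 0$; nothing in your proof depends on that remark.
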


\begin{proof}
The theorem follows immediately from diagram \eqref{cdetaPrel}.
\end{proof}

\section{A canonical section of the bundle of projective structures}\label{se7}

Denote by
$${\mathcal L}\ :=\ \widetilde{p}_1^*(K_{{\mathcal C}/B} \otimes
\alpha_{{\mathcal C}/B}) \otimes \widetilde{p}_2^*(K_{{\mathcal C}/B}
\otimes \alpha_{{\mathcal C}/B}) (2 \Delta_B),$$ by
$${\mathcal V}:= F_*({\mathcal L}/{\mathcal L}(-3\Delta_B)), \ \ {\mathcal V}_2:= F_*({\mathcal L}/{\mathcal L}(-2\Delta_B)).$$
So, for all $b \in B$, denoting by $S_b\,:=\, C_b \times C_b$, and by
$\Delta_b \subset S_b$ the diagonal, we have
$${\mathcal V}_b\, =\, H^0(3 \Delta_b,\, (K_{S_b} \otimes \alpha_{S_b}(2 \Delta_b))_{|3 \Delta_b})
\,\cong\, H^0(3 \Delta_b,\, K_{S_b}(2 \Delta_b)_{|3 \Delta_b}),$$
$$({\mathcal V}_2)_b\, =\, H^0(2 \Delta_b,\, (K_{S_b} \otimes \alpha_{S_b}
(2 \Delta_b))_{|2 \Delta_b})\,\cong\, H^0(2 \Delta_b,\, K_{S_b}(2 \Delta_b)_{|2 \Delta_b}).$$

There is a natural surjective homomorphism
$$G\ :\ {\mathcal V}\ \longrightarrow\ {\mathcal V}_2$$
such that for every $b \,\in\, B$,
$$G_b\ :\ H^0(3\Delta_b,\, K_{S_b}(2 \Delta_b)\big\vert_{3\Delta_b})\ \longrightarrow\
H^0(2\Delta_b,\, K_{S_b}(2 \Delta_b)\big\vert_{2\Delta_b})$$ is the
restriction map. The vector bundle ${\mathcal V}_2$ has a
tautological holomorphic section $\sigma_0$ given by the canonical
trivialization of $K_{C \times C}(2 \Delta) \vert_{2\Delta}$ for any
curve $C$. Denote
\begin{equation}\label{vhat}
\widehat{\mathcal V}\ :=\ G^{-1}({\sigma}_0)\ \subset\ {\mathcal V}.
\end{equation}
Note that $\widehat{\mathcal V}$ is a holomorphic affine bundle over
$B$ modelled on the vector bundle
$f_*({K_{{\mathcal C}/B}}^{\otimes 2})$. Indeed, this follows
immediately from the fact that
$Ker (G) = f_*({K_{{\mathcal C}/B}}^{\otimes 2})$.

We have the following Lemma of \cite{bcfp}.

\begin{lemma}[{\cite[Lemma 2.1]{bcfp}}]\label{lem1a}
The $C^\infty$ (respectively, holomorphic) sections of the fiber
bundle $\widehat{\mathcal V} \, \longrightarrow\,B$ are in a natural
bijective correspondence with the $C^\infty$ (respectively,
holomorphic) families of projective structures for the family of
curves $\mathcal C$.
\end{lemma}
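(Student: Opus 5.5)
The plan is to reduce the statement fiberwise to the absolute correspondence ${\mathcal P}(C)\,\cong\,{\mathcal Q}(C)$ of \eqref{ephi}, and then to check that the resulting bijection preserves regularity in the parameter $b\,\in\, B$. Throughout, $\widehat{\mathcal V}$ is taken with the identification ${\mathcal V}_b\cong H^0(3\Delta_b,\, K_{S_b}(2\Delta_b)\vert_{3\Delta_b})$. This identification is supplied by the trivialization of $\alpha_{S_b}$ near $\Delta_b$ coming from Lemma \ref{lem1} and Remark \ref{triv}.

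\textbf{Step 1: the fibers.} Fix $b\in B$. By \eqref{vhat}, the fiber of $\widehat{\mathcal V}$ over $b$ consists of those sections $t\in H^0(3\Delta_b,\, K_{S_b}(2\Delta_b)\vert_{3\Delta_b})$ with $t\vert_{2\Delta_b}=s_0$. This is exactly ${\mathcal Q}(C_b)$ as defined in \eqref{QC}. By \cite[Theorem 3.2, Lemma 3.6]{BR1}, the map \eqref{ephi} identifies ${\mathcal Q}(C_b)$ with ${\mathcal P}(C_b)$, affinely over $H^0(C_b,\, K_{C_b}^{\otimes 2})$. Hence, set-theoretically, sections of $\widehat{\mathcal V}$ correspond bijectively to families $\{P_b\}_{b\in B}$ of projective structures on the fibers.

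\textbf{Step 2: regularity.} Fix a base family of projective structures that is at least locally defined and holomorphic in $b$. Concretely, work locally on $B$ with relative coordinates $z$ on ${\mathcal C}$ that vary holomorphically in $b$. In such coordinates, the difference between two sections of $\widehat{\mathcal V}$ is a section of $\ker G=f_*(K_{{\mathcal C}/B}^{\otimes 2})$.

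The next task is to make the map \eqref{ephi} explicit. Given a projective atlas with coordinate $w$, the corresponding element of ${\mathcal Q}$ is the restriction to $3\Delta$ of
\[
\frac{dw_1\, dw_2}{(w_1-w_2)^2}.
\]
Expanding this in a relative coordinate $z$ gives
\[
\frac{dz_1\, dz_2}{(z_1-z_2)^2} \;+\; \tfrac{1}{6}\{w,z\}\, dz^2 \;+\; \cdots,
\]
where $\{w,z\}$ is the Schwarzian derivative. So the correspondence is given by the Schwarzian derivative. A family of projective structures is $C^\infty$ (respectively holomorphic) exactly when the local developing coordinates $w$ can be chosen to depend smoothly (respectively holomorphically) on $b$. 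This holds precisely when the quadratic differentials $\{w,z\}$ do, which is the condition that the section of $\widehat{\mathcal V}$ be $C^\infty$ (respectively holomorphic).

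\textbf{Main obstacle.} The hard direction is the converse: starting from a smooth (respectively holomorphic) section, one must produce projective coordinates depending on $b$ with the same regularity. The plan is to solve the Schwarzian equation $\{w,z\}=q_b$ locally by the standard linear ODE
\[
u''+\tfrac{1}{2}q_b\, u=0,
\]
and to take $w=u_1/u_2$ for a fundamental system $u_1,u_2$. Smooth (respectively holomorphic) dependence of ODE solutions on parameters then gives the required regularity of $w$ in $b$. Finally, compatibility of these local charts on overlaps follows from the fiberwise bijection of Step 1.

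Since this is exactly \cite[Lemma 2.1]{bcfp}, in the paper it can simply be cited. The only new point is to verify that the twist by $\alpha_S$ does not affect the argument, and this holds because Lemma \ref{lem1} gives the trivialization near the diagonal canonically, hence holomorphically in families.
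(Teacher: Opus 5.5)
Your proposal is correct and matches what the paper does: the paper gives no argument of its own, but invokes \cite[Lemma 2.1]{bcfp} after identifying ${\mathcal V}_b$ with $H^0(3\Delta_b,\, K_{S_b}(2\Delta_b)\vert_{3\Delta_b})$ via Remark \ref{triv}, and your observation that this untwisting is holomorphic in $b$ (the trivialization of $\alpha_{{\mathcal C}/B}^{\otimes 2}$ is induced by $\varpi$, so the construction of Lemma \ref{lem1} runs verbatim on ${\mathcal C}\times_B{\mathcal C}$) is precisely the point the paper leaves implicit. Your Schwarzian/ODE reconstruction of the cited lemma is sound up to routine bookkeeping: if $q_b$ is the $dz^2$-coefficient of the section, then the equation to solve is $\{w,z\}=6q_b$, i.e.\ $u''+3q_b u=0$; you should also check directly that the M\"obius transition maps $\tau_{ji}(b)$ between your ODE charts are $C^\infty$ (respectively holomorphic) in $b$ --- each is determined by the values of $w_i$ and $w_j$ at three points of the overlap --- since the fiberwise bijection of Step 1 only gives compatibility fiber by fiber.
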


\begin{remark}\label{beta}
The assignment, for any $b \in B$ of the restriction of the
meromorphic form
$({\eta}_P)_b \in H^0(K_{S_b} \otimes \alpha_{S_b}(2 \Delta_b))$ to
$3\Delta_b$, gives a ${\mathcal C}^{\infty}$ section of
$\widehat{\mathcal V}$ that we denote by $\beta_P$ (see Theorem \ref{betaP}).
\end{remark}

Denote by $\overline{\partial}_{\mathcal V}$ the Dolbeault operator
for the holomorphic vector bundle $\mathcal V$. Given a $C^\infty$
section $ \beta\, :\, B\, \longrightarrow\, \widehat{\mathcal V}, $
since $\beta$ is also a section of $\mathcal V$, we have
$ \overline{\partial}_{\mathcal V}(\beta)\, \in\, \Omega^{0,1}(B,
\,{\mathcal V})$. The projection $G$ is holomorphic and
$G(\beta)$ is a holomorphic section of ${\mathcal V}_2$. Hence
$G\circ\overline{\partial}_{\mathcal V}(\beta)\, \in\, \Omega^{0,1}(B,
\,{\mathcal V}_2)$, and it follows that
$$
G\circ\overline{\partial}_{\mathcal V}(\beta)\ =\
\overline{\partial}_{{\mathcal V}_2} (G(\beta))\ =\ 0,
$$
where $\overline{\partial}_{{\mathcal V}_2}$ is the Dolbeault operator
for the holomorphic vector bundle ${\mathcal V}_2$. So we have
$ \overline{\partial}_{\mathcal V}(\beta)\, \in\, \Omega^{0,1}(B, \,
f_*({K_{{\mathcal C}/B}}^{\otimes 2}))$. Let
\begin{equation}\label{ega}
M\ :\ B\ \longrightarrow\ {\mathcal R}_g
\end{equation}
be the holomorphic map to the moduli space of \'etale double covers of
curves of genus $g$, corresponding to the above family
$f:\mathcal C \, \longrightarrow\, B$.

Assume that we have two $C^\infty$ sections of
$\widehat{\mathcal V}\, \longrightarrow\,B$ and
$\beta_1,\, \beta_2\, :\, B\,\longrightarrow\,\widehat{\mathcal V}$
such that
$$\overline{\partial}_{\mathcal V}(\beta_1)\ =\
\overline{\partial}_{\mathcal V}(\beta_2).$$ Since
$G(\beta_1-\beta_2)\,=\, 0$, it is deduced that
$$ \beta_1-\beta_2\ \in\ H^0(B,\, M^* \Omega^{1,0}_{{\mathcal
R}_g}),$$ where $M$ is the modular map in \eqref{ega}.

In \cite[Theorem 4.1]{fnpt} Favale, Naranjo, Pirola and Torelli have
shown that for $g \,\geq\, 5$,
$H^0({\mathcal R}_g,\, \Omega^1_{{\mathcal R}_g}\,) =\,0$, hence we have
the following:

\begin{corollary} 
If $B\, =\, {\mathcal R}_g$ and
$\overline{\partial}_{\mathcal V}(\beta_1)\,
=\,\overline{\partial}_{\mathcal V}(\beta_2)$, then
$\beta_1 \,=\, \beta_2$.
\end{corollary}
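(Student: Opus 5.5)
The corollary follows by combining the discussion just before it with the vanishing theorem of Favale, Naranjo, Pirola and Torelli. Take $B\,=\,{\mathcal R}_g$, or more precisely a smooth finite cover or orbifold chart carrying a universal family, so that the modular map $M$ in \eqref{ega} is the identity (respectively, étale onto its image). Suppose $\beta_1,\,\beta_2$ are $C^\infty$ sections of $\widehat{\mathcal V}$ with $\overline{\partial}_{\mathcal V}(\beta_1)\,=\,\overline{\partial}_{\mathcal V}(\beta_2)$.

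First, both $\beta_i$ take values in the affine bundle $\widehat{\mathcal V}\,=\,G^{-1}(\sigma_0)$. Hence $G(\beta_1-\beta_2)\,=\,\sigma_0-\sigma_0\,=\,0$. So $\beta_1-\beta_2$ is a $C^\infty$ section of $\ker G\,=\,f_*(K_{{\mathcal C}/B}^{\otimes 2})$. Second, $\overline{\partial}_{\mathcal V}$ restricts on the holomorphic subbundle $\ker G$ to its own Dolbeault operator. By hypothesis $\overline{\partial}(\beta_1-\beta_2)\,=\,0$, so $\beta_1-\beta_2$ is a holomorphic section of $f_*(K_{{\mathcal C}/B}^{\otimes 2})$. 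Third, by Serre duality and Kodaira–Spencer, $f_*(K^{\otimes 2}_{{\mathcal C}/B})\,\cong\, M^*\Omega^{1,0}_{{\mathcal R}_g}$. Fibrewise this is $H^0(C,K_C^{\otimes 2})\,=\,H^1(C,T_C)^*$, the cotangent space of ${\mathcal R}_g$, because ${\mathcal R}_g\to{\mathcal M}_g$ is étale. Thus $\beta_1-\beta_2\,\in\, H^0({\mathcal R}_g,\,\Omega^1_{{\mathcal R}_g})$, exactly as stated before the corollary.

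Finally, \cite[Theorem 4.1]{fnpt} gives $H^0({\mathcal R}_g,\,\Omega^1_{{\mathcal R}_g})\,=\,0$ for $g\,\geq\,5$, so $\beta_1\,=\,\beta_2$.

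The main point needing care is the orbifold nature of ${\mathcal R}_g$. There is no universal family over ${\mathcal R}_g$ itself. I would therefore pass to a finite Galois level cover $B\,\to\,{\mathcal R}_g$ (for instance, adding level-$n$ structure, $n\geq 3$) where a universal family exists. There $\beta_1-\beta_2$ is a holomorphic $1$-form invariant under the Galois group, so it descends to a holomorphic $1$-form on ${\mathcal R}_g$ (defined on the smooth locus, hence extending). The FNPT vanishing, which is a statement about ${\mathcal R}_g$ rather than $B$, then applies. Alternatively, one can read $\Omega^1_{{\mathcal R}_g}$ in the orbifold sense, as the paper implicitly does; either way the vanishing gives the result.
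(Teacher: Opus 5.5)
Your proposal is correct and follows the paper's argument. The condition $G(\beta_1-\beta_2)=0$ puts the difference in $\ker G = f_*(K_{{\mathcal C}/B}^{\otimes 2})\cong M^*\Omega^{1,0}_{{\mathcal R}_g}$, equality of the $\overline{\partial}_{\mathcal V}$'s makes it holomorphic, and the Favale--Naranjo--Pirola--Torelli vanishing of $H^0({\mathcal R}_g,\Omega^1_{{\mathcal R}_g})$ for $g\geq 5$ kills it, while your level-cover remarks only spell out the orbifold bookkeeping the paper leaves implicit.
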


Assume $B = {\mathcal R}_g$ and consider the natural $(1,\,1)$-form
$\overline{\partial}(\beta_P)$ on ${\mathcal R}_g$, where $\beta_P$ is
the ${\mathcal C}^{\infty}$-section of $\widehat{\mathcal V}$ defined
in Remark \ref{beta}.

\begin{theorem}\label{debar}
The $(1,1)$-form $\overline{\partial}(\beta_P)$ is a nonzero scalar
multiple of ${\mathcal P}^* (\omega_S)$, where $\omega_S$ is the
K\"ahler form of the Siegel metric on ${\mathcal A}_{g-1}$ and
${\mathcal P}$ denotes the Prym map \eqref{prym}.
\end{theorem}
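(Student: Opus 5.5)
The plan is to follow the strategy of \cite[Theorem 4.4]{bcfp}, replacing the Jacobian data by the Prym data. The main tool is the $\mathcal C^\infty$ splitting of Theorem \ref{c-infty-split}. The point is that $\beta_P$ is obtained from $\eta_P$, which by the diagram \eqref{cdetaPrel} is determined by the Hodge-theoretic map $i_P\circ\psi_P$. Hence the only non-holomorphic ingredient in $\beta_P$ is the $\mathcal C^\infty$ Hodge splitting $i_P$ of the variation of Hodge structure \eqref{vhsf}. Its $\overline\partial$ is governed by the second fundamental form of the Hodge bundle $\mathcal F^1$ inside the flat bundle $R^1f_*\mathbb C_{\alpha}\otimes\mathcal O$. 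That form is the Kodaira--Spencer/Higgs field, and its square, paired via the polarization, is the Siegel form pulled back by the Prym map.

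\textbf{Step 1: reduce $\overline\partial\beta_P$ to $\overline\partial$ of the splitting.} Work with a local family $B\to\mathcal R_g$ as in Section \ref{se6}. The holomorphic inclusion $\chi_P$ identifies $\mathcal H_{\mathcal C/B}$ with a holomorphic subbundle of $f^*(R^1f_*\mathbb C_\alpha)\otimes\mathcal O_{\mathcal C}$. The holomorphic extension $f^*\mathcal F^1\to\mathcal H_{\mathcal C/B}\to T_{\mathcal C/B}\otimes\alpha$ has the $\mathcal C^\infty$ splitting $i_P\circ\psi_P=-\chi_P\circ\eta_P$. So $\overline\partial$ (in the $B$-directions) of $\eta_P$ is a $(0,1)$-form with values in $\operatorname{Hom}(T_{\mathcal C/B}\otimes\alpha,\, f^*\mathcal F^1)$. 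Concretely, $\psi_P$ is holomorphic and the Gauss--Manin connection is flat, so $\overline\partial(i_P)$ equals minus the adjoint of the second fundamental form. Dually, $\overline\partial(i_P)$ is $-\,{}^t\overline{\kappa}$ composed with the polarization, where $\kappa:T_B\to\operatorname{Hom}(\mathcal F^1,R^1f_*\alpha)$ is the Prym period differential, given by cup product with the Kodaira--Spencer class.

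\textbf{Step 2: restrict to $3\Delta$.} An element of $f^*\mathcal F^1$ at $x$, viewed in $\mathcal H_x\subset H^0(K_C\otimes\alpha(2x))$, is a holomorphic form $\omega$. The correction to $\widehat\eta_P$ is therefore a section of $K_S\otimes\alpha_S$, namely $\sum_{i}\overline{\partial}(\cdots)\,\omega_i\boxtimes\omega_j$. Restricting to $3\Delta$ via Lemma \ref{lem1} gives the quadratic differential $\sum \omega_i\omega_j\in H^0(K_C^{\otimes2})$, landing in $\operatorname{Ker}G$ as expected. Choose locally a frame $\omega_i$ of $\mathcal F^1$ orthonormal for the Hermitian form $\sqrt{-1}\int\omega\wedge\overline{\omega}$ (using the flat metric on $\alpha$). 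Then for $v\in T_bB$ we get
\[
\overline\partial\beta_P(\overline v)\;=\;c\sum_{i,j}\overline{\langle \kappa(v)\omega_i,\omega_j\rangle}\;\omega_i\omega_j\in H^0(C_b,K_{C_b}^{\otimes2}),
\]
with $c$ a universal nonzero constant coming from Proposition \ref{diagx} and the normalisation of residues. Pairing with $u\in T_bB\cong H^1(T_C)$ via Serre duality gives $c\sum_{i,j}\langle\kappa(u)\omega_i,\omega_j\rangle\overline{\langle\kappa(v)\omega_i,\omega_j\rangle}=c\,\operatorname{tr}(\kappa(u)\kappa(v)^*)$. This uses that the dual of $d\mathcal P$ is the multiplication map \eqref{mm}.

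\textbf{Step 3: identify with the Siegel form.} The Siegel Kähler form at $Z\in\mathcal H_{g-1}$ is, up to a constant, $\operatorname{tr}(dZ\,Y^{-1}d\overline Z\,Y^{-1})$, that is, $\operatorname{tr}(\kappa\kappa^*)$ in a unitary frame of $\mathcal F^1$. Hence Step 2 gives $\overline\partial\beta_P=c'\,\mathcal P^*\omega_S$, and it remains to check $c'\neq0$. This follows since $c\neq0$ and $\mathcal P^*\omega_S\neq0$, because $\mathcal P$ is generically finite for $g\ge 6$.

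\textbf{Main obstacle.} The delicate point is Step 1–2: making precise that $\overline\partial$ of the restriction to $3\Delta$ equals the restriction of $\overline\partial\widehat\eta_P$. One must check that the trivialization of Lemma \ref{lem1} and the identification \eqref{ephi} are holomorphic in families, and then track the constants (signs, $2\pi\sqrt{-1}$ from residues) to ensure the constant is nonzero. I would first handle this exactly as in \cite{bcfp}. Alternatively, I would pull back to $\widetilde C$ and use Theorem \ref{scomposizione}, $\widetilde\eta=\eta+\widehat\eta_P$. Combined with \cite[Theorem 4.4]{bcfp} for $\widetilde C$ and $C$, this gives $\overline\partial\beta_P$ as a difference of pullbacks of Siegel forms on $\mathcal A_{2g-1}$ and $\mathcal A_g$. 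That difference equals the Siegel form on $\mathcal A_{g-1}$, by the isogeny $J\widetilde C\sim JC\times P$ and the orthogonal splitting of $H^1(\widetilde C)$ into $\pm$ parts. This route serves as a cross-check of the constant.
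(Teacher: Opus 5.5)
Your proposal is correct and follows the same strategy as the paper. In both, $\beta_P$ is obtained by holomorphic operations from the Hodge splitting $i_P\circ\psi_P=-\chi_P\circ\eta_P$ of Theorem \ref{c-infty-split}, and the $\overline\partial$ of that splitting is identified with the Siegel form. The only difference is packaging: the paper realizes $\beta_P$ as the image, under holomorphic bundle maps, of the $\mathcal P$-pullback of a canonical $\mathcal C^\infty$ splitting $h$ over $\mathcal A_{g-1}$ (tensoring with $\mathcal K_\alpha$, pushing forward by $f_*$, quotienting by $\Lambda^2\mathcal F^1$, and restricting to $3\Delta$ via the holomorphic map $r$), which disposes of your ``main obstacle'' immediately and lets it quote \cite[Proposition 4.3]{bcfp} for $\overline\partial h$ being a nonzero multiple of $\omega_S$, whereas your Steps 2--3 reprove that proposition directly through the explicit $\operatorname{tr}(\kappa\kappa^*)$ computation.
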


\begin{proof}
First we recall some results from \cite[section 4]{bcfp}. Consider
the universal family
$$u\ :\ {\mathcal U}\ \longrightarrow\ {\mathcal A}_{g-1}$$ of
principally polarized abelian varieties, and the exact sequence
coming from the Hodge filtration:
\begin{equation}\label{hodgeab}
0\, \longrightarrow\, {\mathcal F}^1\, \longrightarrow\, R^1 u_*{\mathbb C}_{\mathcal U}
\otimes {\mathcal O}_{ {\mathcal A}_{g-1}}\,\longrightarrow\, R^1 u_*{\OO}_{\mathcal U}
\, \cong\, ({\mathcal F}^1)^{\vee}\, \longrightarrow\, 0,
\end{equation}
where we continue to denote by ${\mathcal F}^1$ the Hodge
bundle. Notice that ${\mathcal F}^1$ in \eqref{vhsf} is the pullback
of ${\mathcal F}^1$ in \eqref{hodgeab} via the Prym map
${\mathcal P}\,:\, {\mathcal R}_g \,\longrightarrow\, {\mathcal A}_{g-1}.$

Denote by
$$\iota\,:\, R^1 u_*{\OO}_{\mathcal U}\, \cong\, ({\mathcal
F}^1)^{\vee}\, \longrightarrow\, R^1 u_*{\mathbb C}_{\mathcal U}
\otimes {\mathcal O}_{ {\mathcal A}_{g-1}} $$ the
${\mathcal C}^{\infty}$ splitting of \eqref{hodgeab} given by the
Hodge decomposition.

Tensoring \eqref{hodgeab} by ${\mathcal F}^1$, the
following short exact sequence is obtained:
\begin{equation}\label{hodgeab1}
0 \,\longrightarrow\, {\mathcal F}^1 \otimes {\mathcal F}^1 \, \longrightarrow\,
R^1 u_*{\mathbb C}_{\mathcal U} \otimes {\mathcal F}^1 \,\longrightarrow\,
({\mathcal F}^1)^{\vee} \otimes {\mathcal F}^1 \, \longrightarrow\, 0,
\end{equation}
which has a ${\mathcal C}^{\infty}$ splitting given by
$\iota'\,:=\, \iota \otimes Id_{{\mathcal F}^1}\,:\, ({\mathcal
F}^1)^{\vee}\otimes {\mathcal F}^1\,\longrightarrow \, R^1
u_*{\mathbb C}_{\mathcal U} \otimes {\mathcal F}^1 $. Consider the
map
$$s\ :\ \OO_{{\mathcal A}_{g-1}}\ \longrightarrow\ ({\mathcal F}^1)^{\vee}
\otimes {\mathcal F}^1,\ \ c\ \longmapsto\ c \cdot Id,$$ and take the
pullback of \eqref{hodgeab1} via $s$:
\begin{equation}\label{hodgeab2}
\begin{tikzcd}
0\arrow{r}& {\mathcal F}^1\otimes {\mathcal F}^1\arrow{r} & R^1u_*{\mathbb C}_{{\mathcal U}} \otimes {\mathcal F}^1 \arrow{r}& ({\mathcal F}^1)^{\vee} \otimes {\mathcal F}^1\arrow{r} &0 \\
0\arrow{r}& {\mathcal F}^1\otimes {\mathcal F}^1\arrow{u}{=} \arrow{r}& {\mathcal G} \arrow[hook]{u}\arrow{r} & {\mathcal O}_{{\mathcal A}_{g-1}} \arrow{u}{s}\arrow{r} \arrow{lu}{\iota' \circ s} & 0\\
\end{tikzcd}
\end{equation}
Since the image of $\iota' \circ s$ lies in ${\mathcal G}$, it gives a
${\mathcal C}^{\infty}$ splitting of the bottom exact sequence. Taking
the quotient of the bottom exact sequence in \eqref{hodgeab2} by
$\Lambda^2 {\mathcal F}^1$, we get the short exact sequence
\begin{equation}\label{l}
0\, \longrightarrow\, {\rm Sym}^2({\mathcal F}^1)\, \longrightarrow\, {\mathcal G}^+\,:=\,
{\mathcal G}/\Lambda^2 {\mathcal F}^1 \,\stackrel{l}{\longrightarrow}\,
{\mathcal O}_{{\mathcal A}_{g-1}} \, \longrightarrow\, 0
\end{equation}
that admits a ${\mathcal C}^{\infty}$ splitting
$\sigma\,:=\,t \circ \iota' \circ s \,:\, {\mathcal O}_{{\mathcal
A}_{g-1}}\, \longrightarrow\, {\mathcal G}^+$, where
$t \,:\, {\mathcal G} \,\longrightarrow\, {\mathcal G}^+$ is the
projection. Since the morphism $l$ in \eqref{l} is holomorphic ---
denoting $h\,:=\, \sigma(1)$ -- it follows that
$$l(\overline{\partial}h)\ =\ \overline{\partial}(l \circ h)\ =\
\overline{\partial}(1) \ =\ 0.$$ So the form
\begin{equation}\label{omega}
\omega\ :=\ \overline{\partial}h
\end{equation}
is a $(0,\,1)$-form on ${\mathcal A}_{g-1}$ with values in
${\rm Sym}^2({\mathcal F}^1) \,=\, T^*{{\mathcal A}_{g-1}}$, hence it
is a $(1,1)$-form on ${\mathcal A}_{g-1}$. In \cite[Proposition
4.3]{bcfp} is is proven that the $(1,\,1)$-form $\omega$ is a non-zero
scalar multiple of the K\"ahler form $\omega_S$ for the Siegel metric
on ${\mathcal A}_{g-1}$.

Now, take the tensor product of the exact sequence \eqref{hodge1} by
${\mathcal K}_{\alpha}:= K_{{\mathcal C}/{{\mathcal R}_g}} \otimes
\alpha_{{\mathcal C}/{{\mathcal R}_g}}$. So we get the commutative
diagram
\begin{equation}\label{hodge2}
\begin{tikzcd}
0\arrow{r}& f^* {\mathcal F}^1 \otimes {\mathcal K}_{\alpha}\arrow{r} & f^*(R^1f_*{\mathbb C}_{{\alpha}_{{\mathcal C}/{{\mathcal R}_g}}})\otimes {\mathcal K}_{\alpha}\arrow{r}& f^*(R^1f_*{\alpha_{{\mathcal C}/{{\mathcal R}_g}}})\otimes {\mathcal K}_{\alpha}\arrow{r} &0 \\
0\arrow{r}& f^* {\mathcal F}^1\otimes {\mathcal K}_{\alpha}\arrow{u}{=} \arrow{r}& {\mathcal H}_{{\mathcal C}/{{\mathcal R}_g}} \otimes {\mathcal K}_{\alpha} \arrow[hook]{u}\arrow{r} & {\mathcal O}_{\mathcal C} \arrow{u}{\psi_P \otimes Id_{{\mathcal K}_{\alpha}}}\arrow{r} \arrow{lu} & 0\\
\end{tikzcd}
\end{equation}
with a ${\mathcal C}^{\infty}$ splitting of the bottom exact sequence
given by
$(i_P \circ \psi_P) \otimes Id_{{\mathcal K}_{\alpha}}\, =\, -
(\chi_P\circ \eta_P) \otimes Id_{{\mathcal K}_{\alpha}}$.

Pushing-forward \eqref{hodge2} via $f_*$, the commutative
diagram is obtained:
\begin{equation}\label{hodge3}
\begin{tikzcd}
0\arrow{r}& {\mathcal F}^1\otimes {\mathcal F}^1\arrow{r} & R^1f_*{\mathbb C}_{{\alpha}_{{\mathcal C}/{{\mathcal R}_g}}} \otimes {\mathcal F}^1 \arrow{r}& ({\mathcal F}^1)^{\vee} \otimes {\mathcal F}^1\arrow{r} &0 \\
0\arrow{r}& {\mathcal F}^1\otimes {\mathcal F}^1\arrow{u}{=} \arrow{r}& f_*({\mathcal H}_{{\mathcal C}/{{\mathcal R}_g}} \otimes {\mathcal K}_{\alpha}) \arrow[hook]{u}\arrow{r} & {\mathcal O}_{{\mathcal R}_g} \arrow{u}\arrow{r} \arrow{lu} & 0\\
\end{tikzcd}
\end{equation}
where the right vertical map sends $c$ to
$c \cdot Id_{{\mathcal F}^1}$.

Now, taking the quotient in the bottom exact sequence \eqref{hodge3}
by $\Lambda^2{\mathcal F}^1$, the following short exact sequence is obtained
\begin{equation}\label{hodge5}
0\, \longrightarrow\, {\rm Sym}^2({\mathcal F}^1)\, \longrightarrow\,
f_*({\mathcal H}_{{\mathcal C}/{{\mathcal R}_g}} \otimes {\mathcal K}_{\alpha})^+
\,\longrightarrow\, {\mathcal O}_{{\mathcal R}_g} \,\longrightarrow\, 0,
\end{equation}
where
$f_*({\mathcal H}_{{\mathcal C}/{{\mathcal R}_g}} \otimes {\mathcal
K}_{\alpha})^+ \,:=\, f_*({\mathcal H}_{{\mathcal C}/{{\mathcal
R}_g}} \otimes {\mathcal K}_{\alpha})/ \Lambda^2{\mathcal F}^1$,
that has a ${\mathcal C}^{\infty}$ splitting given by $\eta_P$.

Recall that
${\mathcal H}_{{\mathcal C}/{{\mathcal R}_g}}\,\subset\,\widetilde{p}_{1*}(\widetilde{p}_{2}^*
({\mathcal K}_{\alpha})(2 \Delta_B))$, so
$$f_*({\mathcal H}_{{\mathcal C}/{{\mathcal R}_g}} \otimes {\mathcal K}_{\alpha})\subset f_*(
\widetilde{p}_{1*}(\widetilde{p}_2^*({\mathcal K}_{\alpha})(2
\Delta_{{\mathcal R}_g})) \otimes {\mathcal K}_{\alpha})\,\cong$$
$$ \cong\, f_* (\widetilde{p}_{1*}(\widetilde{p}_1^*({\mathcal K}_{\alpha}) \otimes
\widetilde{p}_2^*({\mathcal K}_{\alpha}) (2 \Delta_{{\mathcal R}_g})))\,=$$
$$=\, F_*(\widetilde{p}_1^*(K_{{\mathcal C}/{{\mathcal R}_g}} \otimes
\alpha_{{\mathcal C}/{{\mathcal R}_g}})\otimes
\widetilde{p}_2^*(K_{{\mathcal C}/{{\mathcal R}_g}} \otimes
\alpha_{{\mathcal C}/{{\mathcal R}_g}}) (2 \Delta_{{\mathcal R}_g}))\,
=\, F_*({\mathcal L}).$$ Taking the push-forward of \eqref{hodge5} via
the codifferential of the Prym map
${\mathcal P}\,:\, {\mathcal R}_g \,\longrightarrow\, {\mathcal A}_{g-1}$ we
obtain the commutative diagram
\begin{equation}\label{hodge4}
\begin{tikzcd}
0\arrow{r}& {\rm Sym}^2({\mathcal F}^1)\arrow{r}\arrow{d}{(d{\mathcal P})^*} & f_*({\mathcal H}_{{\mathcal C}/{{\mathcal R}_g}} \otimes {\mathcal K}_{\alpha})^+ \arrow{r}\arrow{d}{r}& {\mathcal O}_{{\mathcal R}_g} \arrow{r} \arrow{d}{=}&0 \\
0\arrow{r}& T^*{{\mathcal R}_g} \arrow{r}& {\mathcal V}' \arrow{r} & {\mathcal O}_{{\mathcal R}_g} \arrow{r} & 0\\
\end{tikzcd}
\end{equation}
where $r$ is the restriction to the sub-bundle
$f_*({\mathcal H}_{{\mathcal C}/{{\mathcal R}_g}} \otimes {\mathcal
K}_{\alpha})^+ \subset F_*({\mathcal L})/\Lambda^2 {\mathcal F}^1$
of the restriction map to the second order infinitesimal neighborhood
of $\Delta_{{\mathcal R}_g}$, so $ {\mathcal V}'\subset {\mathcal V}$.

The bottom exact sequence admits a ${\mathcal C}^{\infty}$ splitting
given by $\beta_P$, and, by construction, the $(1,\,1)$-form on
${\mathcal R}_g$ given by $ {\overline{\partial}}(\beta_P)$ coincides
with ${\mathcal P}^*(\omega)$, where $\omega$ is the form in
\eqref{omega}. Hence $ {\overline{\partial}}\beta_P$ is a non-zero
scalar multiple of ${\mathcal P}^* (\omega_S)$.
\end{proof}

We now compare the section $\beta_P$ with other known sections of
projective structures.

\begin{lemma}\label{lems}
Let $\omega$ be an orbifold K\"ahler form on ${\mathcal
A}_{g-1}$. Its pullback ${\mathcal P}^*(\omega)$ on
${\mathcal R}_g$ via the Prym map is not equal to the pullback from
of a $(1,\,1)$-form on ${\mathcal M}_{g}$.
\end{lemma}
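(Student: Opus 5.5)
We argue by contradiction. Let $\pi_R\,:\,{\mathcal R}_g\,\longrightarrow\,{\mathcal M}_g$ denote the forgetful map, and suppose ${\mathcal P}^*(\omega)\,=\,\pi_R^*(\theta)$ for some $(1,\,1)$-form $\theta$ on ${\mathcal M}_g$. We look for a tangent direction $v\,\in\, T_{[(C,\alpha)]}{\mathcal R}_g$ with $d\pi_R(v)\,=\,0$ and ${\mathcal P}^*(\omega)(v,\,\overline{v})\,\neq\, 0$. At a point of ${\mathcal R}_g$ where $\pi_R$ is a local isomorphism (for instance, a cover of a curve without automorphisms), every nonzero tangent vector has nonzero image under $d\pi_R$, so no such $v$ exists there. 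We therefore use a global argument along the fibers of $\pi_R$, which are finite sets of cardinality $2^{2g}-1$.

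The fiber of $\pi_R$ over $[C]$ is the set of nontrivial $2$-torsion points $\alpha$ of ${\rm Pic}^0(C)$. The identity ${\mathcal P}^*\omega\,=\,\pi_R^*\theta$ means that, near $[C]$ and under the identification $T_{[(C,\alpha)]}{\mathcal R}_g\,\cong\, H^1(C,\,T_C)$ given by $d\pi_R$, the Hermitian form $\xi\,\longmapsto\, {\mathcal P}^*\omega(\xi,\,\overline{\xi})$ would have to be independent of $\alpha$. The plan is to rule this out at a single point. Since $\omega$ is positive, ${\mathcal P}^*\omega(\xi,\,\overline\xi)\,=\,\omega(d{\mathcal P}(\xi),\,\overline{d{\mathcal P}(\xi)})$. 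The kernel of $d{\mathcal P}$ at $[(C,\alpha)]$ is dual to the cokernel of the multiplication map ${\rm Sym}^2H^0(K_C\otimes\alpha)\,\longrightarrow\, H^0(K_C^{\otimes 2})$. Hence ${\mathcal P}^*\omega$ is degenerate exactly in the directions annihilated by the image of $m$.

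We take $C$ of genus $g$, with $g\,\geq\,5$ or a suitable genus, such that for one choice of $\alpha$ the Prym map is not immersive at $(C,\,\alpha)$, while for another $\alpha'$ it is immersive. The known loci where $m$ fails to be surjective can be used here. One example is $C$ with $\alpha$ such that $K_C\otimes\alpha$ fails to be projectively normal in degree $2$, e.g. Beauville/Debarre-type ramification loci of the Prym map of positive codimension. Another example is $C$ hyperelliptic with $\alpha\,=\,{\mathcal O}(x-y)$ for Weierstrass points $x,\,y$. By contrast, for a generic $\alpha'$ on the same $C$ the map $m$ is surjective, by the known generic injectivity of $d{\mathcal P}$. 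At such $C$, the form ${\mathcal P}^*\omega$ is degenerate at $(C,\,\alpha)$ but nondegenerate at $(C,\,\alpha')$. A form pulled back from ${\mathcal M}_g$ would take the same values at both points; more precisely, this holds where $\pi_R$ is locally étale, which we arrange by taking $C$ with trivial automorphism group, or in the orbifold sense otherwise. This gives the contradiction.

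The main obstacle is producing a curve $C$ where the rank of $m$ genuinely depends on $\alpha$, and keeping the argument compatible with the orbifold structure. As a first attempt, we would use the ramification divisor of ${\mathcal P}$ for $g\,\ge\,7$. It is a proper divisor of ${\mathcal R}_g$ (Beauville), and its image in ${\mathcal M}_g$ is a divisor over which generic fibers of $\pi_R$ contain points outside the ramification locus. Since ${\mathcal P}$ is generically an embedding, it suffices to know that this divisor is not a union of entire fibers of $\pi_R$. Irreducibility of ${\mathcal R}_g$ and monodromy acting transitively on nontrivial $2$-torsion points would show that a fiber-saturated divisor equals the full preimage of a divisor in ${\mathcal M}_g$. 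We would then need to exclude this case, for instance by a dimension count on the hyperelliptic example above. An alternative would be to compare the determinants of ${\mathcal P}^*\omega$ on different sheets over a generic point.
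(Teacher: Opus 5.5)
\textbf{The gap: the existence of a suitable curve is never established.}

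Your reduction is the same as the paper's. Since $\mathcal R_g\to\mathcal M_g$ is étale in the orbifold sense, a form pulled back from $\mathcal M_g$ induces the same Hermitian form on $H^1(C,T_C)$ at every point of the fiber over $[C]$. On the other hand, $\mathcal P^*\omega$ is degenerate exactly where $d\mathcal P$ fails to be injective.

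After this reduction, the whole content of the lemma is the existence of \emph{one} curve $C$ whose fiber contains both a point where $d\mathcal P$ is injective and a point where it is not. That is exactly the step you leave unresolved. The paper takes it from Debarre (his Proposition 5.4.2): over a general bielliptic curve both kinds of double cover occur. Without this input, or an equivalent explicit example, your argument does not close.

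\textbf{Why your candidates do not supply such a curve.}

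Over a hyperelliptic curve, \emph{every} $(C,\alpha)$ is a non-immersive point. Write $C\colon y^2=h_1(x)h_2(x)$ with $\deg h_1=2k$ encoding $\alpha$, so that $\widetilde C=\{y_1^2=h_1,\ y_2^2=h_2\}$. Then $H^0(K_C\otimes\alpha)$ is spanned by the forms $x^j\,dx/y_1$ ($j\le k-2$) and $x^j\,dx/y_2$ ($j\le g-k-1$).
\begin{itemize}
\item The part of the image of $m$ invariant under the hyperelliptic involution consists of the $p(x)\,dx^2/y^2$ with $p\in h_2\cdot\bC[x]_{\le 2k-4}+h_1\cdot\bC[x]_{\le 2g-2k-2}$. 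This has dimension at most $2g-3$.
\item The invariant quadratic differentials form a space of dimension $2g-1$.
\end{itemize}
So $m$ is never surjective there, consistent with Mumford's description of these Pryms as hyperelliptic Jacobians or products of two of them. Hence your claim that ``for a generic $\alpha'$ on the same $C$ the map $m$ is surjective, by the known generic injectivity of $d\mathcal P$'' is false. It was a non sequitur anyway: generic injectivity on $\mathcal R_g$ says nothing about the finitely many points over a special curve.

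The monodromy step is circular. A fiber-saturated set is by definition the preimage of its image. Transitivity of monodromy over the generic point of $\mathcal M_g$ gives no control on how $\pi_R^{-1}(Z)$ splits into components for a proper subvariety $Z$. The hyperelliptic case shows that such a preimage can lie entirely in the non-immersive locus. What you need is a component of that locus that is \emph{not} saturated, which is precisely Debarre's bielliptic statement.

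\textbf{Further points.}
\begin{itemize}
\item For $g\ge 7$ the non-immersive locus is a degeneracy locus of $m$ of expected codimension $\binom{g}{2}-3g+4>1$. It is not a ramification divisor as in the case $g=6$.
\item Comparing determinants over a generic curve cannot help, since there every point of the fiber is immersive.
\item A bielliptic curve has a nontrivial automorphism, so you cannot arrange $\mathrm{Aut}(C)=1$. You must use the orbifold version of the pullback argument, which does work.
\end{itemize}
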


\begin{proof}
Denote by
$f\,:\, {\mathcal R}_{g}\,\longrightarrow\, {\mathcal M}_g$ the
forgetful map. It is a degree $2^{2g} -1$ \'etale covering. Take a
$(1,\,1)$-form $\gamma$ on $ {\mathcal M}_g$. Take a point
$[(C, \alpha)]\,\in\, {\mathcal R}_{g}$, where the differential of
the Prym map ${\mathcal P}$ is not injective. If
${\mathcal P}^*(\omega)\, = \,f^*(\gamma)$, then for every
$x \,\in\, f^{-1}([C])$, the form $f^*(\gamma)$ is degenerate.

On the other hand, in \cite[Proposition 5.4.2]{debarre_duke} it is
shown that over the generic point of the bielliptic locus there
exist points where the differential of the Prym map is injective,
and other points where it is not. Hence, ${\mathcal P}^*(\omega)$ is
nondegenerate in certain points of a fiber over a general
bielliptic curve, and degenerate on other points in the same fiber,
so it cannot be a pullback $f^*(\gamma)$.
\end{proof}

\begin{theorem}\label{different}
The section $\beta_P$ of $\widehat{\mathcal V}$ is not the pullback
of a section of the bundle of projective structures on
${\mathcal M}_g$. In particular, it is not the pullback of the
section $U$ given by uniformisation, nor of the sections constructed
in \cite{bcfp}, \cite{bgv1}, \cite{bv}.
\end{theorem}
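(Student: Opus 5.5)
We argue by contradiction, comparing $\overline{\partial}$-images. Suppose $\beta_P\,=\,f^*\beta$ for some ${\mathcal C}^{\infty}$ section $\beta$ of the bundle of projective structures on ${\mathcal M}_g$, where $f\,:\,{\mathcal R}_g\,\longrightarrow\,{\mathcal M}_g$ is the forgetful map. The bundle $\widehat{\mathcal V}$ over ${\mathcal R}_g$ is the pullback by $f$ of the analogous affine bundle over ${\mathcal M}_g$. Indeed, the fibre of $\widehat{\mathcal V}$ over $[(C,\alpha)]$ is identified with ${\mathcal Q}(C)\,\subset\, H^0(3\Delta,\, K_S(2\Delta)\vert_{3\Delta})$ through the trivialization $\gamma^\phi$ of $\alpha_S$ near $\Delta$ (Remark \ref{triv}), and this space depends only on $C$. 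Its model vector bundle $f_*(K^{\otimes 2}_{{\mathcal C}/B})$ is $f^*$ of the cotangent bundle of ${\mathcal M}_g$. Since $f$ is holomorphic and étale, the Dolbeault operators commute with pullback, so
$$\overline{\partial}(\beta_P)\ =\ \overline{\partial}(f^*\beta)\ =\ f^*\bigl(\overline{\partial}\beta\bigr).$$
Here $\overline{\partial}\beta$ is a $(0,1)$-form on ${\mathcal M}_g$ with values in $\Omega^{1,0}_{{\mathcal M}_g}$, that is, a $(1,1)$-form on ${\mathcal M}_g$.

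By Theorem \ref{debar}, $\overline{\partial}(\beta_P)\,=\,c\,{\mathcal P}^*(\omega_S)$ with $c\,\neq\, 0$. Therefore ${\mathcal P}^*(\omega_S)\,=\,f^*(c^{-1}\overline{\partial}\beta)$ would be the pullback of a $(1,1)$-form on ${\mathcal M}_g$. This contradicts Lemma \ref{lems} applied to the orbifold Kähler form $\omega_S$. It follows that $\beta_P$ is not a pullback from ${\mathcal M}_g$. In particular, $\beta_P$ differs from the pullbacks of the uniformisation section $U$ and of the sections of \cite{bcfp}, \cite{bgv1}, \cite{bv}, since all of these are sections over ${\mathcal M}_g$.

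The point needing the most care is the identification of $\widehat{\mathcal V}$ with $f^*$ of the ${\mathcal M}_g$-bundle, compatibly with $G$ and $\sigma_0$. This is what lets us compare $\beta_P$ with pulled-back sections and write $\overline{\partial}(f^*\beta)\,=\,f^*\overline{\partial}\beta$. The trivialization $\gamma^\phi$ is canonical up to the choice of $\phi$, and it restricts on $\Delta$ to $\phi$, so the identification respects $s_0$; this settles that step. One should also work on a suitable level cover, or with orbifold conventions, so that the universal families exist.

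For the section $\beta$ of \cite{bgv} on ${\mathcal R}_g$, which is not itself a pullback, the plan is different. By the Corollary preceding Theorem \ref{debar} (which uses \cite{fnpt}), two sections with equal $\overline{\partial}$ coincide. So it suffices to show $\overline{\partial}\beta\,\neq\,\overline{\partial}\beta_P$. I would use the computation of $\overline{\partial}\beta$ in \cite{bgv}, expected to be a multiple of a pullback form from ${\mathcal M}_g$, possibly plus a term whose degeneracy locus differs from that of ${\mathcal P}^*\omega_S$. The bielliptic degeneracy argument of Lemma \ref{lems} would then distinguish it from $c\,{\mathcal P}^*\omega_S$. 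This comparison is the main obstacle, since it depends on the precise form of $\overline{\partial}\beta$ in \cite{bgv}.
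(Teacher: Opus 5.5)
Your argument for the statement is correct and is essentially the paper's proof. Assuming $\beta_P=f^*\beta$ gives $\overline{\partial}\beta_P=f^*(\overline{\partial}\beta)$, so by Theorem \ref{debar} the nonzero multiple of $\mathcal{P}^*\omega_S$ would be the pullback of a $(1,1)$-form from $\mathcal{M}_g$, contradicting Lemma \ref{lems}. Your extra care in identifying $\widehat{\mathcal V}$ with a pullback bundle addresses a point the paper passes over silently.

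Your last paragraph, on the section of \cite{bgv}, is not needed here; the paper treats it as a separate theorem. As sketched it would not work. The form $\overline{\partial}\beta^P=\mathcal{P}^*(8\pi\Theta^*\omega_{FS})$ is itself a Prym pullback, not a pullback from $\mathcal{M}_g$, so the bielliptic argument of Lemma \ref{lems} cannot separate it from $\lambda\mathcal{P}^*\omega_S$. The paper instead restricts to the Jacobian locus in the closure of the Prym locus, using Whitney's tangent cones, and then invokes \cite[Theorem 5.4]{bv}.
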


\begin{proof}
Let $\beta $ be a section of projective structures on $\mathcal M_g$
and let again
$f\,:\, {\mathcal R}_{g}\,\longrightarrow\, {\mathcal M}_g$ the
forgetful map. By Theorem \ref{debar}
$\debar \beta_P \,=\, \lambda \mathcal P^*(\omega_S)$ for some
constant $\lambda$. Since $\debar \beta$ is a $(1,\,1)$-form on
$\mathcal{M}_g$ and $\omega_S$ is a K\"ahler metric on
$\mathcal{A}_{g-1}$, the previous lemma implies that
$f^*(\debar \beta) \,\neq\, \lambda {\mathcal P}^*(\omega_S)$. But
$\debar (f^*\beta)\, =\, f^*( \debar \beta)$, so
$\debar (f^*\beta) \,\neq\, \debar \beta_P$ and
$f^*\beta \,\neq\, \beta_P$.
\end{proof}

We remark that Lemma \ref{lems} gives a short proof of the fact that
the section $\beta^P$ constructed in \cite{bgv} is not the pull-back
of a section over ${\mathcal M}_g$, a fact proved in \cite{bgv} by a
different argument.

Finally we compare $\beta_P$ and $\beta^P$.

\begin{lemma}\label{Whitney}
Let $M$ be a complex orbifold. Let $X \,\subset\, M$ be a complex
analytic subset and let $Y\,\subset\, M$ be a complex
submanifold. Let $\gamma $ be a smooth $(1,\,1)$-form on $M$. If
$Y\subset X$, and $\gamma\big\vert_{X_\mathrm{reg}}\,\equiv\, 0$, then
$\gamma\big\vert_Y \,\equiv\, 0$.
\end{lemma}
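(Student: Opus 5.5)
The strategy is to reduce to a density statement on $Y$, using continuity of $\gamma$ together with the structure of analytic sets. Since $\gamma$ is smooth and the condition is local, it suffices to work in an orbifold chart. There $M$ is locally a quotient $\widetilde U/G$ of an open $\widetilde U\subset\bC^n$ by a finite group $G$. The preimages of $X$ and $Y$ are a $G$-invariant analytic subset $\widetilde X$ and a union of submanifolds, and $\gamma$ lifts to a smooth $G$-invariant form. So we may assume $M$ is a complex manifold. The statement $\gamma\big\vert_Y\equiv 0$ means that for every $y\in Y$ and every $v,w\in T_yY$ we have $\gamma_y(v,\bar w)=0$. Since $\gamma$ is a $(1,1)$-form, it suffices to show $\gamma_y(v,\bar v)=0$ for $v\in T_yY$, by polarization.

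The key tool will be the Whitney (a)-type regularity of analytic sets, or more simply the following fact about limits of tangent spaces. Let $y\in Y\subset X$ and let $v\in T_yY$. I claim there is a sequence $x_k\in X_{\rm reg}$ with $x_k\to y$ and vectors $v_k\in T_{x_k}X_{\rm reg}$ with $v_k\to v$. Granted this, $\gamma_{x_k}(v_k,\bar v_k)=0$ by hypothesis, and continuity of $\gamma$ gives $\gamma_y(v,\bar v)=0$.

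To prove the claim I will stratify. Take a Whitney stratification of $X$ compatible with $Y$ (or with $X_{\rm reg}$), and argue by induction on dimension. First choose a stratification of $X$ with $X_{\rm reg}$ as the union of top-dimensional open strata. Then replace $Y$ by its dense open subset $Y^0$ where $Y$ meets a single stratum $S$ in an open subset of $Y$; this is possible by analyticity, taking generic points of $Y$ with respect to the finitely many locally closed strata. By continuity again it suffices to prove vanishing on $Y^0$. At $y\in Y^0$ we have $T_yY\subset T_yS$. If $S\subset X_{\rm reg}$, we are done directly. Otherwise $S\subset \overline{S'}$ for some stratum $S'$ of higher dimension, and Whitney condition (a) says that every limit of tangent planes $T_{x_k}S'$ with $x_k\to y$ contains $T_yS$. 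Choosing any convergent subsequence of $T_{x_k}S'$ in the Grassmannian, the limit plane $\tau$ contains $v$, so there exist $v_k\in T_{x_k}S'$ with $v_k\to v$. This reduces the vanishing of $\gamma(v,\bar v)$ to the vanishing of $\gamma$ on $T S'$. The induction goes downward on the codimension of strata: $\gamma$ vanishes on tangent spaces of top strata by hypothesis, and then on all strata, since each non-top stratum lies in the closure of a higher stratum and Whitney (a) applies to each incident pair.

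The main obstacle is invoking the existence of Whitney stratifications of complex analytic sets and making the orbifold reduction clean; the finite-group lift is harmless since $\gamma$ is pulled back as a smooth form. If one prefers to avoid Whitney theory, an alternative for the curve-selection step is to use local parametrization or resolution of singularities. Taking $\mu:\widehat X\to X$ proper with $\widehat X$ smooth and $\mu$ biholomorphic over $X_{\rm reg}$, the form $\mu^*\gamma$ vanishes on a dense open set of $\widehat X$ and hence everywhere. However, recovering arbitrary $v\in T_yY$ this way needs $Y$ to lift, which again requires an argument. So I would go with the stratification approach.
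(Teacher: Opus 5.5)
Your proof is correct, but it reaches the key approximation property by a different route from the paper.

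The paper argues pointwise with Whitney's tangent cones. For every $y\in Y$ it has $T_yY=C(Y,y)\subset C(X,y)\subset C_4(X,y)$. The first inclusion is the trivial monotonicity of the cone $C=C_3$, and the second is Whitney's theorem $C_3\subset C_4$. Since $C_4(X,y)$ is by definition the set of limits of vectors $v_j\in T_{x_j}X_{\mathrm{reg}}$ with $x_j\to y$, the vanishing of $\gamma(v,\bar v)$ follows at once at \emph{every} point of $Y$, with no genericity step.

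You instead use a Whitney (a)-regular stratification in which $X_{\mathrm{reg}}$ is a union of strata. At an arbitrary $y\in Y$ the space $T_yY$ need not lie in the tangent space of the stratum through $y$. So you must first pass to the dense open set $Y^0$ where $Y$ locally lies in a single stratum, and then conclude by continuity of $\gamma|_Y$. Finding $Y^0$ is fine, e.g.\ by a Baire argument on the finitely many locally closed sets $Y\cap S$ near a point.

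What your route buys is that it only needs the existence of Whitney stratifications, the more standard and more often quoted part of Whitney's theory, instead of the inclusion $C_3\subset C_4$. It costs the extra density argument, and it gives the approximation property directly only on $Y^0$, which suffices.

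Two small remarks.
\begin{itemize}
\item Your downward induction over strata is unnecessary. Condition (a) is required for every pair $(S',S)$ with $S\subset\overline{S'}$, and $X_{\mathrm{reg}}$ is dense, so you can take $S'$ directly to be a stratum contained in $X_{\mathrm{reg}}$.
\item Your orbifold reduction is more explicit than the paper's (which just says the problem is local). However, the claim that the preimage of $Y$ in a chart is a union of submanifolds requires $Y$ to be a suborbifold. For $\mathbb{Z}/2$ acting on $\mathbb{C}^2$ by $(z,w)\mapsto(-z,w)$, with quotient coordinates $(u,w)=(z^2,w)$, the smooth curve $\{u=w^3\}$ has the cusp $\{z^2=w^3\}$ as preimage. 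This is harmless in the paper's application, where $Y$ lies in the locus where the local groups act trivially.
\end{itemize}
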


\begin{proof}
The problem is local. So we can assume that $M$ is a domain in
$\mathbb C^n$. First some definitions and some theorems of
Whitney \cite{whit1,whit2} are recalled. If $V$ is a (complex) analytic subset of
$M$ and $p\in V$, Whitney defined several cones attached to $V$ at
$p$. We need only two of them, namely $C_3(V,p)$, which is also
denoted simply by $C(V,p)$, and $C_4(V,p)$. Just the
following four facts are needed:
\begin{enumerate}
\item if $V$ is smooth at $p$, then $C(V,p)\, =\, T_pV$;

\item if $p\,\in \,W \,\subset\, V$, then $C(W,p)\,\subset\, C(V,p)$;

\item by definition $C_4(V,p)$ is the set of $v \,\in\, \mathbb C^n$
such that there exist sequences $\{p_j\}$ and $\{v_j\}$ such that
$p_j \,\in\, V_{\mathrm{reg}}$, $v_j \,\in\, T_{p_j}V$,
$p_j \,\to\, p$ and $v_j \,\to\, v$ (in the topology of
$\mathbb C^n$);

\item $C(V,p) \, \subset\, C_4(V,p)$.
\end{enumerate}
(See \cite[pp. 211-223]{whit1} for the statements and \cite{whit2}
for the proofs; other nice references are \cite[Ch. 7]{whit3} and
\cite[\S 9]{chirka}.) Define
$q\, :\, T^{1,0}M\,\longrightarrow\, \mathbb C$ by
$q(v)\,:=\gamma\,(v,\,\overline{v})$. Then $q$ determines $\gamma$,
just as the norm determines the scalar product in a Hilbert space.
By assumption $q|_{TX_{\mathrm{reg}}}\equiv 0$. We wish to prove
that $q|_{TY}\, \equiv\, 0$. Take $y \,\in\, Y$ and
$v \,\in\, T_yY$. Then $v \,\in\, C(Y,y) \,=\, T_yY$, so
$v \,\in\, C(X,y)$ and therefore $v\,\in \,C_4(X,y)$. Hence there
are $x_j\,\in\, X_{\mathrm{reg}}$ and
$v_j \,\in\, T_{x_j} X_{\mathrm{reg}}$ with $x_j \,\to\, y$ and
$v_j \,\to\, v$. By assumption $q(v_j)\, =\,0$, so
$q(v)\, =\, \lim q(v_j)\,=\, 0$. This proves that $q\,\equiv\, 0$
on $TY$ and hence that $\gamma |_Y\,\equiv\, 0 $, as desired.
\end{proof}

\begin{theorem}
The ${\mathcal C}^{\infty}$ section $\beta_P$ of
$\widehat{\mathcal V}$ differs from the section $\beta^P$
constructed in \cite{bgv}
\end{theorem}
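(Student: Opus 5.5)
We argue by contradiction and compare $\overline{\partial}$ of the two sections. Suppose $\beta_P\,=\,\beta^P$ on ${\mathcal R}_g$. Then $\overline{\partial}(\beta_P)\,=\,\overline{\partial}(\beta^P)$ as $(1,\,1)$-forms on ${\mathcal R}_g$.

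By Theorem \ref{debar}, the left-hand side is $\lambda\,{\mathcal P}^*\omega_S$ with $\lambda\,\neq\,0$. For the right-hand side we take from \cite{bgv} the description of $\overline{\partial}(\beta^P)$: the section $\beta^P$ is built from second order theta functions on the Prym variety. We expect its $\overline{\partial}$ to be (a multiple of) the pullback, under ${\mathcal P}$ or under a related theta-type map, of a Kähler form on ${\mathcal A}_{g-1}$ different from the Siegel one, or else a form with a different degeneracy locus. The plan is to exhibit a locus on which the two forms behave differently.

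The natural test locus is where $d{\mathcal P}$ fails to be injective, since ${\mathcal P}^*\omega_S$ degenerates exactly there. For concreteness, take an analytic subset $X\,\subset\,{\mathcal R}_g$ contracted by ${\mathcal P}$, that is, a positive dimensional fiber of the Prym map \cite{donagi}, \cite{mumford}, \cite{carlos}. On $X_{\mathrm{reg}}$ the form ${\mathcal P}^*\omega_S$ restricts to zero, because ${\mathcal P}$ is constant on $X$. The closure of a fiber may be singular and may only be an orbifold subvariety, so Lemma \ref{Whitney} is used to pass from $X_{\mathrm{reg}}$ to a smooth curve $Y\,\subset\,X$ passing through a convenient point, for instance a point of the bielliptic or another special locus where $\overline{\partial}\beta^P$ can be computed explicitly. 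This gives ${\mathcal P}^*\omega_S\big\vert_Y\,\equiv\,0$, hence under our assumption $\overline{\partial}(\beta^P)\big\vert_Y\,\equiv\,0$.

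The last step is to show that $\overline{\partial}(\beta^P)\big\vert_Y\,\not\equiv\,0$, which contradicts the previous paragraph. This is the main obstacle. We would first try to extract from \cite{bgv} a formula for $\overline{\partial}\beta^P$ as a pullback of a Kähler (orbifold) form under a map that is not constant along the fibers of ${\mathcal P}$. A candidate is a map recording the pair (Prym, Jacobian of $C$), or the level structure on second order theta functions, which depends on $C$ and not only on $P(C,\,\alpha)$. If $\overline{\partial}\beta^P$ is positive on tangent vectors of $Y$, we are done.

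If such a formula is unavailable, we fall back on a pointwise argument. At a point of $Y$ we would compute $\overline{\partial}\beta^P$ on a tangent vector $v$ in $\ker d{\mathcal P}$ using the Schiffer variation description of Section \ref{section:Schiffer}. We would then check that $\overline{\partial}\beta^P(v,\,\overline{v})$ is nonzero, whereas ${\mathcal P}^*\omega_S(v,\,\overline{v})\,=\,0$.
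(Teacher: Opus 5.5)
Your reduction to comparing $\debar\beta_P$ with $\debar\beta^P$ is the right start, but the last step fails, and it cannot be repaired on the locus you chose. By \cite[Theorem 1.4]{bgv}, $\debar\beta^P={\mathcal P}^*(8\pi\,\Theta^*\omega_{FS})$. So $\debar\beta^P$ is itself the pullback of a form on ${\mathcal A}_{g-1}$ under the Prym map. It is not a pullback under some map that also records $C$ or extra level data. This is your own first guess, and it already kills the Prym-fiber test.

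For every $v\in\ker d{\mathcal P}$ you get $\debar\beta^P(v,\overline{v})=8\pi\,\Theta^*\omega_{FS}(d{\mathcal P}v,\overline{d{\mathcal P}v})=0$. Hence $\debar\beta^P$ restricts to zero on every fiber of ${\mathcal P}$, exactly as $\lambda\,{\mathcal P}^*\omega_S$ does. On your curve $Y$ inside a positive-dimensional Prym fiber, both forms vanish and you only get $0=0$. The claim $\debar\beta^P\big\vert_Y\not\equiv 0$ is false, so neither your hoped-for formula nor the pointwise Schiffer-variation fallback can prove it. Loci in ${\mathcal R}_g$ where $d{\mathcal P}$ degenerates carry no information here.

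Since both forms are ${\mathcal P}$-pullbacks, the real question is whether $\gamma:=\lambda\omega_S-8\pi\,\Theta^*\omega_{FS}$, a form on ${\mathcal A}_{g-1}$, vanishes along the image of ${\mathcal P}$. You need a place where it provably does not. The paper works in ${\mathcal A}_{g-1}$, not in ${\mathcal R}_g$:
\begin{itemize}
\item Suppose ${\mathcal P}^*\gamma=0$. Since ${\mathcal P}$ is generically an isomorphism onto $X_{\mathrm{reg}}$, where $X$ is the closure of the Prym locus, this gives $\gamma\big\vert_{X_{\mathrm{reg}}}=0$.
\item Pryms degenerate to Jacobians \cite{wirtinger}, \cite{beauville}, \cite{donagi-smith}. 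So the Jacobian locus $j(\Omega)$ is a complex submanifold contained in $X$, where $j$ is the Torelli map of ${\mathcal M}_{g-1}$ and $\Omega$ is an open dense set on which $j$ is an embedding.
\item Lemma \ref{Whitney} is needed exactly here: it transfers the vanishing from $X_{\mathrm{reg}}$ to this submanifold, giving $j^*\gamma=0$ on $\Omega$.
\item This contradicts \cite[Theorem 5.4]{bv}, which shows that $\debar\beta^\eta=\pi\, j^*\omega_S$ and $\debar\beta^\theta=8\pi\, j^*\Theta^*\omega_{FS}$ differ.
\end{itemize}
The ideas you are missing are the explicit formula for $\debar\beta^P$ from \cite{bgv} and the passage to the boundary of the Prym locus, which imports a known inequality on ${\mathcal M}_{g-1}$.
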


\begin{proof}
We know from \cite[Theorem 1.4]{bgv} that
$\debar \beta^P \,=\,{\mathcal P}^* ( 8 \pi \Theta^*\omega_{FS})$.
{}From Theorem \ref{debar} above we know also that
$\debar \beta_P \,=\, \lambda {\mathcal P}^* \omega_S$. Set for simplicity
$\gamma\,:=\,\lambda \omega_S - 8 \pi \Theta^*\omega_{FS}$. This is
a form on $\mathcal A_{g-1}$. If we prove that
${\mathcal P}^*(\gamma) \,\neq\, 0$ on ${\mathcal R}_g$, then
$$\debar (\beta_P - \beta^P ) \ \neq\ 0,$$ and hence
$\beta_P \,\neq\, \beta^P$.

So, let
$j\,:\, {\mathcal M}_{g-1}\,\longrightarrow\, {\mathcal A}_{g-1}$ be
the Torelli map. Let $X$ be the closure of $\im {\mathcal P}$ in
$\mathcal A_{g-1}$. Let $\Omega \subset\mathcal M_{g-1,\reg}$ be
open and dense (for the complex topology) and such that $j$ is an
embedding on $\Omega$. (For example, one can take $\Omega $ to be the locus
in $\mathcal M_{g-1}$ corresponding to curves without nontrivial
automorphisms if $g\,>\, 3$ and $\Omega = \mathcal M_{2,\mathrm{reg}}$
if $g\,=\,3$.) Then $Y\,:=\,j(\Omega)$ is a complex submanifold contained
in $X$, since the Jacobian locus is contained in the closure of the
Prym locus (see \cite{wirtinger}, \cite{beauville}, \cite{donagi-smith}). 

Assume by contradiction that ${\mathcal P}^*\gamma \,=\,0$. The
restriction
${\mathcal P}\big\vert_{\mathcal P^{-1}(X_{\mathrm{reg}})} \,:\,
{\mathcal P}^{-1}(X_{\mathrm{reg}}) \,\longrightarrow\,
X_{\mathrm{reg}}$ is generically an isomorphism. So
${\mathcal P}^*\gamma =0$ implies that $\gamma|_{X_\reg} \,=\,0$.
By Lemma \ref{Whitney} also $\gamma|_{Y} = 0$. Since
$j\,:\, \Omega\, \longrightarrow\, Y$ is an isomorphism, we deduce
that $j^*\gamma\, =\,0$, i.e.,
$j^* \omega_S \,=\, 8\pi j^* \Theta^* \omega_{FS}$. But this
contradicts Theorem 5.4 in \cite{bv}. Indeed that paper studies two
sections $\beta^\theta$ and $\beta^\eta$. Theorem 5.4 in \cite{bv}
proves that $\debar \beta^\theta \,\neq\, \debar \beta^\eta$, while
$\debar \beta^\theta \,=\, 8 \pi j^* \Theta^* \omega_{FS} $ by (4.16) in
that paper and $\debar \beta^\eta\, =\, \pi j^* \omega_{S} $ by
(4.18) in that paper. So we reach a contradiction. This proves that
$\mathcal P^*\gamma \,\neq\, 0$, and hence
$\beta_P \,\neq\, \beta^P$.
\end{proof}

\section*{Declarations}

No AI was used in any form.

No data were generated or used.

None of the authors has any conflict of interests to declare.

%%%%%%%%%%%%%%%%%%%%%%%%%%%%%%%%%%%%%%%%%%%%%%%%%%%%%%%%%%%%%%%%%%%%%%%%%%%%%%%%%%%%%%%%%%%%%%%%

\end{document}